\documentclass[12pt, reqno]{amsart}
\usepackage{amsmath,amssymb,amsfonts,amscd,hyperref,color}
\usepackage[utf8]{inputenc}
\usepackage{csquotes}
\usepackage{nicefrac}

\usepackage[abs]{overpic}
\usepackage{verbatim}
\usepackage{enumerate}

\usepackage[
backend=biber,
style=alphabetic,
sorting=nyt,
maxnames = 99,
minnames = 99,
maxalphanames=99,
maxcitenames=99,
maxbibnames = 99,
doi=false,
url=false,
isbn= false
]{biblatex}

\AtEveryBibitem{%
	\clearfield{issn}%
}

\usepackage{xcolor}
\usepackage{xparse}
\usepackage{tikz}
\usetikzlibrary{fadings}

\definecolor{rainbowred}{HTML}{D00000}
\definecolor{rainboworange}{HTML}{E86A00}
\definecolor{rainbowyellow}{HTML}{B88600}
\definecolor{rainbowgreen}{HTML}{007A32}
\definecolor{rainbowblue}{HTML}{0047AB}
\definecolor{rainbowviolet}{HTML}{6A0D83}

\pgfdeclarehorizontalshading{rainbow shading}{100bp}{%
  color(0bp)=(rainbowred);%
  color(33.30bp)=(rainbowred);%
  color(33.31bp)=(rainboworange);%
  color(41.65bp)=(rainboworange);%
  color(41.66bp)=(rainbowyellow);%
  color(50.00bp)=(rainbowyellow);%
  color(50.01bp)=(rainbowgreen);%
  color(58.34bp)=(rainbowgreen);%
  color(58.35bp)=(rainbowblue);%
  color(66.68bp)=(rainbowblue);%
  color(66.69bp)=(rainbowviolet);%
  color(100bp)=(rainbowviolet)%
}

\newsavebox{\rainbowbox}
\newcounter{rainbowtextcounter}

\newlength{\rainbowtextaboveskip}
\newlength{\rainbowtextbelowskip}
\newlength{\rainbowparagraphindent}

\makeatletter

\newcommand{\rainbowNoHyperBegin}{%
  \@ifpackageloaded{hyperref}{\NoHyper}{}%
}

\newcommand{\rainbowNoHyperEnd}{%
  \@ifpackageloaded{hyperref}{\endNoHyper}{}%
}

\makeatother

\NewDocumentEnvironment{rainbowtext}{O{\linewidth}+b}
{%
  \par

  \vskip\rainbowtextaboveskip

  \begingroup

  \stepcounter{rainbowtextcounter}%
  \edef\rainbowfadingname{%
    rainbow-fading-\arabic{rainbowtextcounter}%
  }%

  \begin{lrbox}{\rainbowbox}%
    \rainbowNoHyperBegin
    \begin{minipage}{#1}%
      \color{white}%

      \setlength{\parskip}{0pt}%

      \setlength{\parindent}{\rainbowparagraphindent}%

      \everypar{%
        \hspace*{\rainbowparagraphindent}%
      }%

      \ignorespaces
      #2%
    \end{minipage}%
    \rainbowNoHyperEnd
  \end{lrbox}%

  \begin{tikzfadingfrompicture}[name=\rainbowfadingname]
    \node[
      inner sep=0pt,
      outer sep=0pt
    ] {%
      \usebox{\rainbowbox}%
    };
  \end{tikzfadingfrompicture}%

  \noindent
  \begin{tikzpicture}[baseline=(rainbow-node.base)]

    \node[
      inner sep=0pt,
      outer sep=0pt,
      opacity=0
    ] (rainbow-node) {%
      \usebox{\rainbowbox}%
    };

    \shade[
      path fading=\rainbowfadingname,
      fit fading=false,
      shading=rainbow shading
    ]
      (rainbow-node.south west)
      rectangle
      (rainbow-node.north east);

  \end{tikzpicture}%

  \endgroup

  \par

  \vskip\rainbowtextbelowskip

  \ignorespacesafterend
}
{}

\newcommand{\Hmm}[1]{\leavevmode{\marginpar{\tiny%
$\hbox to 0mm{\hspace*{-0.5mm}$\leftarrow$\hss}%
\vcenter{\vrule depth 0.1mm height 0.1mm width \the\marginparwidth}%
\hbox to
0mm{\hss$\rightarrow$\hspace*{-0.5mm}}$\\\relax\raggedright #1}}}

\newtheorem{theorem}{Theorem}
\newtheorem{corollary}[theorem]{Corollary}
\newtheorem{lemma}[theorem]{Lemma}
\newtheorem{proposition}[theorem]{Proposition}

\theoremstyle{definition}

\newtheorem{example}[theorem]{Example}
\newtheorem*{remark}{Remark}

\numberwithin{equation}{section}
\newcommand{\Z}{{\mathbb Z}}
\newcommand{\R}{{\mathbb R}}

\newcommand{\N}{{\mathbb N}}

\newcommand{\D}{{\mathbb D}}

\let\H\undefined
\let\L\undefined
\newcommand{\H}{H}
\newcommand{\L}{\mathcal{L}}
\newcommand{\G}{\mathcal{G}}
\newcommand{\F}{\mathcal{F}}
\renewcommand{\P}{\mathcal{P}}

\renewcommand{\L}{\mathcal{L}}
\renewcommand{\D}{\mathcal{D}}

\newcommand{\eat}[1]{}

\title[Liouville Theorem for Domains in Graphs]
{A Liouville Theorem for Domains in Graphs}

\author[P.~Hake]{Philipp Hake}
\address{P.~Hake, Institut für Mathematik, Universität Leipzig, 04109 Leipzig, Germany}\email{philipp.hake@math.uni-leipzig.de}
\author[M.~Keller]{Matthias Keller}
\address{M.~Keller, Israel Institute of Advanced Studies, Jerusalem, Israel; Institut für Mathematik, Universität Potsdam
14476  Potsdam, Germany}
\email{matthias.keller@uni-potsdam.de}
\author[F.~Pogorzelski]{Felix Pogorzelski}
\address{ F.~Pogorzelski, Israel Institute of Advanced Studies, Jerusalem, Israel;  Institut für Mathematik, Universität Leipzig, 04109 Leipzig, Germany}
\email{felix.pogorzelski@math.uni-leipzig.de}

\author[M.~Schmidt]{Marcel Schmidt}
\address{ M.~Schmidt,  Institut für Mathematik, Friedrich-Schiller-Universität Jena, 07743 Jena, Germany}
\email{schmidt.marcel@uni-jena.de}

\begin{document}

\date{\today}

\begin{abstract}
We show a Liouville theorem for domains  in graphs with Dirichlet boundary conditions. More specifically, we characterize the non-existence of non-zero bounded harmonic functions. Since Dirichlet boundary conditions give rise to Laplacians with a positive killing term, we can first characterize the validity of the Liouville theorem by the fact that the Green operator applied to the killing terms is equal to 1, or in other words, that the constant function $1$ is a potential. Secondly, we derive a characterization in terms of stochastic completeness at infinity and and total loss of heat. Thirdly, we give a characterization in terms of a Green formula for superharmonic potentials. Finally, we investigate the Liouville property in terms of recurrence and transience of the graph without killing term. As an application, we consider subsets of the Euclidean space such as cones and percolation clusters, and weakly spherically symmetric graphs.
\end{abstract}
\maketitle

\tableofcontents

\section{Introduction and main result}\label{sec:intro}
The classical Liouville theorem says that on $\R^d$ there are no non-constant bounded harmonic functions, \cite{Liouville,Bocher,Picard1923}. Since then variations of such a result have been proven in various contexts. For example Yau's result on complete Riemannian manifolds \cite{Yau76} later refined by Karp \cite{Karp82b} states that there are no non-constant harmonic functions in $ L^p $    or the seminal work of Colding/Minicozzi \cite{CM}  bounds the number of linearly independent bounded harmonic functions of polynomial growth.

We revisit this problem on graphs where the classical Liouville theorem on $\Z^d$ is well-known \cite{Heilbronn,Capoulade1932}. The uniqueness of positive harmonic functions on orthonants of $\Z^d$ was studied in \cite{BMS,Raschel} and a Phragmen-Lindelöf type result  was shown in \cite{Guadie}. Recently, a rather surprising result by Buhovsky/Logunov/Malinnikova/Sodin showed that on $\Z^2$ there are no non-constant harmonic functions which are unbounded on a small set, \cite{BLMS}, which is in stark contrast to the case of $\Z^3$ for $d \geq 3$. Furthermore, there are also so called three-circle theorems for harmonic functions on $\Z^d$,  \cite{GM,LM1,LM2}, which bound the average growth of harmonic functions.

Liouville theorems have also been studied intensively on groups, see e.g.\@ the influential works \cite{KV83,Erschler,Kaimanovich96}, and also the classical textbook by Woess \cite{Woe00} for abstract background and the connection to the Martin boundary. For subgraphs induced by percolation clusters or random environments in Cayley graphs, Liouville theorems have been obtained in  \cite{Kaimanovich90,BLS99,Barlow,Gab05,BDKY15}.
There are also results on the dimension of the space of polynomially bounded  harmonic functions on groups in the spirit of Colding/Minicozzi, see e.g.\@ \cite{Kleiner,HJL} which were also generalized to graphs satisfying certain curvature bounds \cite{HJ,HJL2}. As for results on minimal growth conditions on non-constant harmonic functions on groups, see \cite{BDKY15,BDKY17,AK24}.
 
 For general graphs satisfying a Harnack inequality, the  Liouville theorem for positive harmonic functions was
 proved by Delmotte \cite{Delmotte}. It was then subsequently used in various works, see e.g.\@ \cite{Barlow,Hua19,Hua21,HLLY,Munch}. Also, analogues of Yau's and Karp's result have been proven for graphs \cite{HuaJost13, HuaKeller13,KellerLenzSchmidt2026} and even more generally for Dirichlet forms \cite{HuaKellerLenzSchmidt}. The connection of $\ell^2$-Liouville theorems and essential self-adjointness was recently elaborated on in \cite{HMW21,IKMW}. In \cite{AS23}, it has been shown that every stochastically complete graph over a discrete measure space has the $\ell^1$-Liouville property, i.e.\@ every positive summable superharmonic function is constant. The Liouville property has been verified in \cite{BMP24} for bounded solutions to the discrete homogeneous  Schr\"odinger equation  with positive potentials satisfying a minimal growth condition with respect to an intrinsic metric. A related result for certain $\ell^p$-solutions can be found in \cite{MP25}. 

Liouville properties can be quite sensitive with respect to quasi-isometry of graphs. While stability holds for harmonic functions of finite Dirichlet energy 
\cite{Soa93}, instability for bounded harmonic functions was established in \cite{Benjamini}.

The existence of non-constant bounded harmonic functions on transient planar graphs was established in \cite{BenjaminiSchramm,Hutchcroft,LPS}.

The question we study in this paper is  the non-existence of non-zero bounded harmonic functions on domains, i.e., subgraphs of graphs with Dirichlet boundary conditions. Notice that   a Dirichlet boundary condition of a subgraph translates into the presence of a positive killing term for the corresponding Laplace operator, see below. Therefore, it is obvious that under the presence of such a boundary the constant functions are no longer harmonic unless they are constantly zero. Moreover, on a finite subgraph,  the corresponding Laplace operator with Dirichlet boundary conditions is injective \cite[Lemma~0.29]{KLW} and, therefore,  no non-zero harmonic functions can exist. However, as we will demonstrate, on infinite subgraphs, non-zero bounded harmonic functions may occur.

Let us first introduce the setting and then state the main result of this paper.
A {\em graph} $(b,c)$ with vertices in a countable, discrete set $X$ is a pair consisting of a symmetric function $b:X \times X \to [0, \infty)$ with $b(x,x) = 0$ and $$ 
\sum_{y \in X} b(x,y) < \infty $$ for all $x \in X$, and a non-negative map $c:X \to [0,\infty)$. The map $c$ is called the {\em killing term} of the graph. If $c=0$, then we simply write $b$ for the graph $(b,0)$. Furthermore, we define the canoncial $\ell^p(X)$ spaces with norms ${\|\cdot\|}_p$  for $p \in [1,\infty]$ and scalar product $\langle{\cdot}, \cdot\rangle$ for $ p=2 $ in the usual way. We call a function $f$ which takes values in $[0,\infty)$ (respectively in $(0,\infty	)$) \emph{positive} and write $f\ge0$ (respectively \emph{strictly positive} and write $f>0$), and call $-f$ \emph{negative} (respectively \emph{strictly negative}).
For a summable or positive function $f$ over a discrete set $A$ (think of  a subset of $X$ or $X\times X$), we write $$\sum_{X} f=\sum_{x \in X} f(x).$$
Given $x \in X$, we write $1_x = 1_{\{x\}} \in C_c(X)$.

We will assume throughout this work that the graphs $(b,c)$  are {\em connected}, i.e.,\@ for every  $x,y \in X$ there are $x= y_1,y_2, \dots, y_n, y_{n+1}=y$ such that $b(y_i, y_{i+1})>0$ for all $1 \leq i \leq n$.

 The {\em formal Laplacian} of a  graph $(b,c)$ over $X$ is given by a map  $\L=\L_{b,c}:\mathcal{F} \to C(X)$ defined on $$\mathcal{F}=\{f:X\to \R\mid \sum_{y\in X} b(x,y)|f(y)|<\infty\mbox{ for all }x\in X\},$$
where we denote by $C(X)$ the vector space of real-valued functions on $X$ which acts as
\[\L f(x)= \sum_{y\in X}b(x,y)(f(x)-f(y)) \,+ \, {c(x)} f(x).\]

There is a related	quadratic form $\mathcal{Q}$  defined 
 as 
\[\mathcal{Q}(f)=\frac{1}{2}\sum_{x,y\in X}b(x,y)(f(x)-f(y))^2+\sum_{x\in X}c(x)f(x)^2\] 
on the space of {\em functions of finite energy} \[\mathcal{D}=\mathcal{D}(\mathcal{Q})=\{f\colon X\to \R \mid \mathcal{Q}(f)<\infty\}.\]
We denote by $C_c(X) \subseteq C(X)$  the subspace of all finitely supported functions. 
The {\em extended space} $$\mathcal{D}_0=\mathcal{D}_0(\mathcal{Q})=\overline{C_c(X)}^{\|\cdot\|_{o}}$$ is defined as the closure of $C_c(X)$ in $\mathcal{D}$ with respect to the norm $\|f\|_{o}=\sqrt{\mathcal{Q}(f)+|f(o)|^2}$, where $o \in X$ is some fixed vertex. The restriction $Q=Q_{b,c}$ of $\mathcal{Q}$ to $$D(Q)=\mathcal{D}_0\cap \ell^2(X)$$ gives rise to a regular Dirichlet form, see \cite[Theorem~1.19]{KLW}, and we denote the associated self-adjoint operator on $\ell^2(X)$  by $L$ and observe that by \cite[Theorem~1.6]{KLW}
\begin{align*}
	L=\mathcal{L}\quad\mbox{on }D(L).
\end{align*}

Next, we explain how the killing term relates to Dirichlet boundary conditions for subgraphs which is discussed in greater detail in \cite[Section~2.2]{KLW}. Let  $b$ be a connected graph over $X$ and let $U$ be a non-empty proper subset of $X$. We let 
$ i_U \colon C(U) \to C(X)$ be the extension by zero of functions on $U$ to $X$ and define $$Q_U(f)=Q(i_Uf)$$ as a
form with domain 
\begin{align*}
D(Q_U) = \{f \in\ell^2(U) \mid  i_U f \in D(Q)\}.
\end{align*}
It was shown that this form is a Dirichlet form, \cite[Proposition~2.18]{KLW} and equal to the restriction of $\mathcal{Q}$ to the form closure of $C_c(U)$ in $\ell^2(U)$, \cite[Corollary 2.19]{KLW} which justifies to refer to $Q_U$ as the restriction of $Q$ to $U$ with \emph{Dirichlet boundary conditions}. Furthermore, the corresponding operator $L_U$ is a restriction of the formal Laplacian $\mathcal{L}_U$ of the graph $(b_U,d_U)$ over $U$ defined by $b_U(x,y) = b(x,y)$ and  $$ d_U(x) = \sum_{y \in X \setminus U} b(x,y). $$
Specifically, we have
\begin{align*}
	\mathcal{L}_U f(x) =  \sum_{y\in U}b(x,y)(f(x)-f(y)) \,+ \, {d_U(x)} f(x)
\end{align*}
for $f$ in the corresponding formal domain and $ x\in U $.
Observe that while the original graph $b$ did not have a killing term, the killing term $d_U$ arises naturally for the Dirichlet restriction of the graph to $U$.

Thus, we consider graphs with killing term as they model the Dirichlet restrictions of Laplacians to subgraphs. Furthermore, it eases notation since the specific supergraph becomes irrelevant when considering the Dirichlet boundary conditions.

A connected graph $(b,c)$ is called \emph{transient} if  
for every $y \in X$,  there is a unique $G_y =G_y^{b,c}\in \D_0$ such that $\mathcal{L} G_y = 1_y$. It is well known that whenever $c\neq 0$, then the graph is necessarily  transient. 
The function $$G=G^{b,c}\colon X \times X \to (0,\infty), \quad G(x,y) = G_y(x)$$ is called the {\em Green function} of the graph $(b,c)$. We recall that the Green function can be represented as the Laplace transform of the semigroup, \cite[Theorem~6.29]{KLW}
\begin{align*}
	G(x,y) = \int_0^\infty e^{-tL}1_y(x) dt.
\end{align*}
Letting 
\begin{align*}
 	\mathcal{G} =\mathcal{G}^{b,c} =\{k\colon X\to \R \mid\sum_{y\in X}G(x,y)|k(y)|< \infty \mbox{ for all } x \in X \}, \end{align*}
we define  the corresponding  Green operator $G=G^{b,c}\colon\mathcal{G} \to C(X)$ as
	\begin{align*}
 		Gk(x)=\sum_{y\in X}G(x,y)k(y), 
\end{align*}
for $ k \in \mathcal{G}$  and   $x \in X$.
Every $u \in \mathcal{F}$ arising as $u=Gk$ with $k \in \mathcal{G}$ is called a {\em potential}. We write $$\mathcal{P}=\{u\in \mathcal{F} \mid u=Gk \text{ for some } k \in \mathcal{G}\}$$ for the set of all potentials. Observe that for any potential $u=Gk$ we have $\mathcal{L} u=k$, \cite[Proposition~4]{HKPII}. We  also define  $Gf(x)=\sum_{y} G(x,y) f(y)\in[0,\infty]$ for all positive functions $f\ge 0$. If $c=0$, denote $G^b=G^{b,0}$.
\medskip

Furthermore, we recall the concept of stochastic completeness at infinity.
We only sketch briefly what is needed to state the theorem and refer to Section~\ref{sec:Heat} or \cite[Chapter~7]{KLW} for in-depth background information. 
A connected graph $(b,c)$ is called {\em stochastically complete} if $e^{-tL}1 = 1$ for all  $t \geq 0$, where $e^{-tL}$ denotes the semigroup associated with the self-adjoint Laplacian $L$, see \cite[Chapter~1]{KLW} which extends to $\ell^{\infty}$.
Any graph with non-vanishing killing term $c \neq 0$ is necessarily {\em stochastically incomplete}, i.e.\@ not stochastically complete, and we call $(b,c)$  {\em stochastically complete at infinity} if for some (equivalently all) $t >0$ and for some (equivalently all) $x \in X$, we have 
\begin{align*}
	M_t(x) := e^{-tL}1(x) \, + \, \int_0^{t} ( e^{-sL} {c})(x) \,ds=1.
\end{align*}
Stochastic completeness means that all  heat is conserved within the graph. On the other hand, a killing term constantly removes heat from the graph, so it becomes stochastically incomplete. Stochastic completeness at infinity then means that no heat is lost except for the amount removed by the killing term.
Clearly, every stochastically complete graph is also stochastically complete at infinity. 

Finally, we call a set $A \subseteq X$ {\em thick (with respect to $b$)} if for any $(b,0)$-superharmonic function $s\ge 0$ the inequality $s \geq 1$ on $A$ implies $s \geq 1$ on $X$.
	
\begin{theorem}[Liouville theorem]
		\label{thm:main}
Let $(b,c)$ be a connected transient graph over $X$.
Then, the following assertions are equivalent.
\begin{itemize}
\item[(i)] Every bounded harmonic function is zero.
\item[(ii)] $Gc = 1$, i.e., the constant function $1$ is a potential.
\item[(iii)]  The graph is stochastically complete at infinity and 
$$e^{-tL}1 \to 0,\mbox{ as } t \to \infty\;\mbox{ pointwise}.$$
\item[(iv)] For all  superharmonic $u \in \mathcal{P}$  $$\sum_X  \mathcal{L} u = \sum_X cu,$$
where both sums are over non-negative terms.
\item[(v)] For any non-thick $A \subseteq X$ and some (all)  $x \in X$
  $$\sum_{y \in X\setminus A} G^b(x,y) c(y) = \infty. $$
\end{itemize}
\end{theorem}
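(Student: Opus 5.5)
The central object is the function $h := 1 - Gc$. I will show it is the maximal bounded harmonic function below $1$, and then check that each of the five conditions is equivalent to $h = 0$.

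\textbf{Step 1: $h$ is bounded, harmonic and nonnegative.} Since $c \ge 0$ and the graph is transient, $Gc$ is a well-defined potential. We have $Gc \le 1$. To see this, approximate $c$ by $c1_{K_n}$ along an exhaustion by finite sets $K_n$. Each $G(c1_{K_n})$ is the minimal positive solution of $\mathcal{L}u = c1_{K_n}$. The constant $1$ satisfies $\mathcal{L}1 = c \ge c1_{K_n}$, so by the minimum principle for potentials, $G(c1_{K_n}) \le 1$. Monotone convergence then gives $Gc \le 1$. Consequently $h \ge 0$, $h$ is bounded, and $\mathcal{L}h = c - c = 0$, using $\mathcal{L}Gk = k$ (\cite[Proposition~4]{HKPII}).

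\textbf{Step 2: (i)$\Leftrightarrow$(ii).} Direction (i)$\Rightarrow$(ii) follows at once, since (i) forces $h = 0$. For (ii)$\Rightarrow$(i), let $u$ be bounded harmonic with $|u| \le C$. Then $C - u \ge 0$ satisfies $\mathcal{L}(C-u) = Cc$, so $C-u$ is a positive superharmonic function with $\mathcal{L}(C-u) = Cc$. By the Riesz decomposition, $C - u = G(Cc) + w$ with $w \ge 0$ harmonic. Under (ii), $G(Cc) = C$, so $-u = w \ge 0$. Applying the same argument to $-u$ gives $u \ge 0$. Hence $u = 0$.

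\textbf{Step 3: (ii)$\Leftrightarrow$(iii).} I use the identity
$$\int_0^t e^{-sL}c\,ds \nearrow Gc \quad \text{as } t \to \infty,$$
together with the monotonicity of $t \mapsto e^{-tL}1$. The function $M_t$ is constant in $t$; call its value $M$, a bounded harmonic function with $0 \le M \le 1$. Letting $t \to \infty$ gives
$$M = \lim_{t\to\infty} e^{-tL}1 + Gc.$$
If (ii) holds, then $Gc = 1$ and $M \le 1$ force both $M = 1$ and $\lim_t e^{-tL}1 = 0$, which is (iii). Conversely, (iii) says exactly $M = 1$ and $\lim_t e^{-tL}1 = 0$, so $Gc = 1$.

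\textbf{Step 4: (ii)$\Leftrightarrow$(iv).} Write a superharmonic potential as $u = Gk$ with $k \ge 0$. By symmetry of $G$ and Tonelli,
$$\sum_X cu = \sum_y k(y)\,Gc(y).$$
This equals $\sum_X k = \sum_X \mathcal{L}u$ for all such $u$ if and only if $Gc = 1$ wherever it matters. Choosing $k = 1_y$ gives $Gc(y) = 1$ for every $y$, so (iv) implies (ii). The converse is immediate from the same identity.

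\textbf{Step 5: (v), the main obstacle.} Here one must compare $G^{b,c}$ with $G^{b}$. Note that $G^b$ may be infinite if $b$ is recurrent, in which case $G^b = \infty$ and I would treat that case separately. The plan is to express $h$ as a probability that the random walk of $b$ survives the killing:
$$h(x) = \mathbb{E}_x\Bigl[\exp\Bigl(-\int_0^\infty c(X_s)/\deg\,ds\Bigr)\Bigr].$$
Then use the analytic interpretation of thickness: $A$ is non-thick if and only if the equilibrium potential $e_A$ of $A$ (the minimal $b$-superharmonic function $\ge 1$ on $A$) is not identically $1$. This means the walk has positive probability of eventually leaving $A$ forever.

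For (v)$\Rightarrow$(ii): suppose $h \ne 0$. Take $A = \{h < \varepsilon\}$. I would argue that $A$ is non-thick, using that $h/\sup h$ is, up to scaling, a $b$-superharmonic function. Moreover the expected killing accumulated off $A$, namely $\sum_{X\setminus A} G^b(\cdot,y)c(y)$, must be finite, because survival probability is bounded below off $A$. This contradicts (v).

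For (ii)$\Rightarrow$(v): given a non-thick $A$, with positive probability the walk eventually stays in $X \setminus A$. On that event it must be killed almost surely. A zero-one argument (a Borel–Cantelli type / Khasminskii lemma) then gives infinite total killing off $A$, and hence infinite expectation $\sum_{X\setminus A} G^b c = \infty$.

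Making these probabilistic steps rigorous analytically, via Khasminskii's lemma and the resolvent identity $G^b = G^{b,c} + G^{b,c}cG^b$, is the hardest part. I would attempt it first through that resolvent identity.
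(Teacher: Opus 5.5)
Steps 1, 2 and 4 are sound. In Step 4, testing (iv) with $u=G_y$ is exactly the paper's (iv.c) and avoids the dichotomy lemma.

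\textbf{Step 3 contains a false claim, but it is easy to repair.} $M_t$ is not constant in $t$. Since $M_0=1$, constancy would make every graph stochastically complete at infinity. Nor is its limit harmonic. What holds is that $t\mapsto M_t$ is nonincreasing with $0\le M_t\le 1$ \cite[Theorem~7.14]{KLW}. Your identity then reads $\lim_t M_t=\lim_t e^{-tL}1+Gc$. Under (ii) this gives $1\ge M_t\ge \lim_s M_s\ge Gc=1$ for every $t$, hence stochastic completeness at infinity and $e^{-tL}1\to0$. This is the paper's Lemma~\ref{lem:heat}.

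\textbf{The genuine gap is (v).} You give only a plan, and one of its steps is wrong.

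In (v)$\Rightarrow$(ii) you test thickness of $A=\{h<\varepsilon\}$ with $h$. But $\mathcal{L}_{b,0}h=-ch\le 0$, so $h$ is $b$-\emph{sub}harmonic. The right test function is $Gc=1-h$, which satisfies $\mathcal{L}_{b,0}(Gc)=c(1-Gc)\ge0$. Then $Gc/(1-\varepsilon)\ge1$ on $A$, but this fails somewhere once $\varepsilon<\sup h$, so $A$ is non-thick. The same identity makes your finiteness heuristic rigorous. By the ``Moreover'' part of Lemma~\ref{lemma:resolvent}, $G^b(c(1-Gc))\le Gc\le 1$, and hence $G^b(1_{X\setminus A}c)\le\varepsilon^{-1}$.

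For (ii)$\Rightarrow$(v), ``Khasminskii / zero--one'' is not yet an argument. The paper's route runs as follows.
\begin{enumerate}
\item Take $s$ witnessing non-thickness, normalized so that $s\le 1$ and $s=1$ on $A$.
\item Then $1-s$ is $(b,1_Ac)$-subharmonic. Lemma~\ref{cor:Gc_dichotomy} gives $G^{b,\lambda c_1}(\lambda c_1)\le s$ for $c_1=1_Ac$ and every $\lambda>0$.
\item Resolvent monotonicity gives $G^{b,\lambda c}(\lambda c)\le s+\lambda G^b c_2$. This is $<1$ at a point where $s<1$, provided $\lambda$ is small.
\item Invariance of $Gc=1$ under $c\mapsto\lambda c$ (Lemma~\ref{lemma:multiplication by constants}) transfers the conclusion to $\lambda=1$.
\end{enumerate}
The scaling step is the idea missing from your analytic plan, because finiteness of $G^bc_2$ alone does not push $Gc$ below $1$.

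Your probabilistic route can also be completed, and needs no zero--one law.
\begin{enumerate}
\item Write $h(x)=\mathbb{E}_x\bigl[\exp\bigl(-\int_0^\zeta c(\mathbb{X}_s)\,ds\bigr)\bigr]$. Your formula needs the lifetime $\zeta$ and should not have the factor $1/\deg$.
\item Non-thickness gives positive probability of never hitting $A$.
\item $G^b(1_{X\setminus A}c)<\infty$ makes the killing integral a.s.\ finite on that event, so $h>0$.
\end{enumerate}
You would then have to justify this Feynman--Kac formula and the hitting-probability description of thickness.

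The recurrent case you set aside is immediate. Only $\emptyset$ is non-thick there, so (v) reduces to $c\neq 0$, which is forced by transience of $(b,c)$.
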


The equivalence of (i) and (ii) is found in the  context of energy forms, which include (extended) Dirichlet forms, in unpublished parts of the fourth named author's PhD thesis, see \cite[Theorem 4.55]{Sch17}. However,  owing to the discrete setting, the proof we give below is much simpler.
 In the literature, the convergence  $e^{-tL}1 \to 0$, as $t \to \infty$, in (iii) is known as $\ell^1$-asymptotic stability of the semigroup, because by a duality argument it is equivalent to $e^{-tL} f \to 0$, as $t \to \infty$, for all $f \in \ell^1(X)$. This asymptotic stability has been investigated in the context of Schrödinger operators  with nonnegative potentials on $\R^d$, see \cite{ABB92,Bat92}, and in the conext of perturbations of recurrent Dirichlet forms, see \cite{GO96}. Its equivalence to (i) and (ii) in the $\R^d$-setting is contained in \cite[Proposition~3.4]{ABB92}. The notion of stochastic completeness at infinity  was coined  in \cite{KL12} and since 
  $\R^d$ is stochastically complete, stochastic completeness at infinity holds for all nonnegative potentials and \cite{ABB92} simply uses the formula behind it (adapted to $\R^d$). 
  A related approach by Takeda is studied in \cite{Takeda} where the Liouville property is discussed under the scope of a notion called explosion by killing. A version of the equivalence of (i) and (v) in $\R^d$ was shown in \cite[Theorem~1.1]{GH98}, see also \cite{Murata03,Murata05,GangulyPinchover}. With this result we recover a discrete version of Batty's result from $\R^d$, \cite{Bat92}.  Independent of these considerations on asymptotic stability, the Liouville property (i) for Schrödinger operators with nonnegative potentials has been investigated in \cite{BCX08,HGP10}.

We devote  a section to each item in the above theorem, where we also expand on the corresponding criterion. For example we  characterize (i)  of subharmonic and superharmonic functions in Section~\ref{sec:Liouville}. We will also show that it is sufficient that the criteria (ii)  hold for some $x \in X$ in Section~\ref{sec:Green} and give a characterization in terms of random walks. 
Furthermore, we will discuss in Section~\ref{sec:Heat} that the condition (iii) means that  all heat is lost from the graph via the killing term, or in other words, via the Dirichlet boundary condition. We will also give a probabilistic interpretation of this condition in terms of the lifetime of the associated Markov process.
We will also show that criterion (iv) can be interpreted as a Green formula and that it can be reduced to functions $u=G_x$ for  $x \in X$ in Section~\ref{sec:GreenFormula}. Section~\ref{sec:Decomposition} is devoted to the decomposition criterion (v) and we put the pieces together for the proof of Theorem~\ref{thm:main}.
In Section~\ref{sec:Rec}, we show the Liouville property if the underlying graph without killing term is recurrent. The Section~\ref{sec:measure} is devoted to Schrödinger operators over general measure spaces, and we show that the equivalences still hold in this case.
Finally, in Section~\ref{sec:examples}, we apply our results to some examples. Specifically, we verify non-existence of non-zero, bounded harmonic functions for Dirichlet restrictions on subsets of  $\Z^d$ such as cones or percolation clusters in the supercritical regime, and weakly spherically symmetric graphs.  

\section{The Liouville property}\label{sec:Liouville}
In this section we take a closer look at the Liouville property and give the proof of the equivalence of (i) and (ii) in Theorem~\ref{thm:main}. We also give a characterization of the Liouville property in terms of bounded superharmonic functions being a potential  and positive subharmonic functions being zero.

\begin{theorem}[Liouville property]
	\label{thm:Liouville}Let $(b,c)$ be a connected transient graph over $X$.  Then, the following assertions are equivalent.
	\begin{itemize}
		\item[(i)] Every bounded harmonic function is zero.
		\item[(i.a)] Every bounded superharmonic function is a potential.
		\item[(i.b)] Every bounded  subharmonic function is negative. 
		\item[(i.c)] Every bounded positive subharmonic function is zero.
		\item[(ii.a)] $Gc(x) = 1$ for some (all) $x \in X$, i.e., 
		$1 \in \P$.
	\end{itemize}
\end{theorem}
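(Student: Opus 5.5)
The plan is to organize the argument around the function $h := 1 - Gc$ and to use the Riesz decomposition of superharmonic functions together with a minimum principle for potentials. Here $Gc \le 1$ holds because $G(\mathcal{L}1)$ is the smallest positive solution $v$ of $\mathcal{L}v = c$. Indeed, $1$ is a positive superharmonic function with $\mathcal{L}1 = c$, and by the standard minimality of potentials ($Gk \le s$ for every positive $s$ with $\mathcal{L}s \ge k \ge 0$, proved by approximating $G$ through finite exhaustions) we get $Gc \le 1$. Then $h = 1 - Gc$ satisfies $0 \le h \le 1$ and $\mathcal{L}h = c - c = 0$, using $\mathcal{L}Gc = c$ from \cite[Proposition~4]{HKPII}. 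So $h$ is a bounded positive harmonic function.

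The implications then run as follows.

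\textbf{(i) $\Rightarrow$ (ii.a):} the function $h$ is bounded and harmonic, so (i) forces $h = 0$, i.e.\ $Gc = 1$ everywhere.

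\textbf{(ii.a) "some" $\Rightarrow$ "all":} $h \ge 0$ is harmonic and vanishes at some point $x$. Since $0 = \mathcal{L}h(x) = -\sum_y b(x,y)h(y) + (\deg(x) + c(x))h(x)$, all neighbours of $x$ also have $h = 0$. Connectedness then gives $h \equiv 0$.

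\textbf{(i.b) $\Leftrightarrow$ (i.c) $\Leftrightarrow$ (i):}
\begin{itemize}
\item (i.b) $\Rightarrow$ (i.c) is trivial.
\item (i.c) $\Rightarrow$ (i): for bounded harmonic $u$, the function $u_+$ is subharmonic because $\mathcal{L}(u_+) \le 0$ at points where $u > 0$, and $u_+ = 0$ elsewhere. Hence $u_+ = 0$, and applying the same to $-u$ gives $u = 0$.
\item (ii.a) $\Rightarrow$ (i.b): let $u$ be bounded subharmonic with $u \le M$, where we may take $M \ge 0$. Then $v := M\cdot 1 - u = M\,Gc - u$ is superharmonic. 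We show that $u_+ \le \varepsilon + M\,G(c1_{X\setminus K})$ for finite $K$, and let $K \uparrow X$; the right-hand side tends to $0$ pointwise since $Gc(x) < \infty$.
\end{itemize}

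The estimate in the last item is the main obstacle; it is essentially a maximum principle on infinite sets. I would try to obtain it from the minimum principle on the finite set $K$: $u_+$ is subharmonic, and on the complement of $K$ it is dominated by $M \le M\,Gc$. A cleaner route would be via (i.a).

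\textbf{(ii.a) $\Rightarrow$ (i.a):} for bounded superharmonic $s$, write the Riesz decomposition $s = G(\mathcal{L}s) + h_s$, with $h_s$ harmonic. This holds provided $s$ is bounded below by $-C = -C\,Gc$, which itself is a potential: apply the decomposition to $s + C\,Gc \ge 0$. For a positive superharmonic function the Riesz decomposition holds, with $h_s \ge 0$ being the greatest harmonic minorant. Since $0 \le h_s \le s + C \le C'$ and $C' = C'\,Gc$, the function $C'Gc - h_s$ is positive superharmonic with harmonic part $-h_s$. Uniqueness of the decomposition together with positivity of harmonic parts of positive superharmonic functions gives $-h_s \ge 0$, hence $h_s = 0$.

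\textbf{(i.a) $\Rightarrow$ (i.b):} for bounded subharmonic $u$, (i.a) gives $-u = Gk$ with $k = \mathcal{L}(-u) \ge 0$, so $-u \ge 0$, i.e.\ $u$ is negative.

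Thus the final chain is (i) $\Rightarrow$ (ii.a) $\Rightarrow$ (i.a) $\Rightarrow$ (i.b) $\Rightarrow$ (i.c) $\Rightarrow$ (i). The only genuinely nontrivial input is the Riesz decomposition, together with the fact that the harmonic part of a positive superharmonic function is its greatest harmonic minorant and is $\ge 0$. I would prove this by monotone exhaustion using the Dirichlet Green functions on finite sets.
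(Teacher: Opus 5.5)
Your proof is correct, but it closes the cycle in a different order from the paper. You go (i) $\Rightarrow$ (ii.a) $\Rightarrow$ (i.a) $\Rightarrow$ (i.b) $\Rightarrow$ (i.c) $\Rightarrow$ (i), whereas the paper goes (i) $\Rightarrow$ (ii.a) $\Rightarrow$ (i.b) $\Rightarrow$ (i.c) $\Rightarrow$ (i.a) $\Rightarrow$ (i).

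The paper front-loads the work into Lemma~\ref{cor:Gc_dichotomy}. There it gets $Gc\le 1$ from the Riesz decomposition of the constant $1$, the some/all dichotomy from the Harnack inequality, and $Gc\le 1-u$ for subharmonic $u\le 1$ from the greatest-harmonic-minorant results \cite[Theorems~3.4 and~3.5]{FischerKeller}. With this, (ii.a) $\Rightarrow$ (i.b) is one line, and the Riesz decomposition is used again in (i.c) $\Rightarrow$ (i.a) after $(-u)_+$ shows $u\ge 0$.

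You instead obtain $Gc\le 1$ from minimality of potentials via finite exhaustions. Your key step (ii.a) $\Rightarrow$ (i.a) uses only existence, uniqueness and positivity of the harmonic part in the Riesz decomposition. Shifting $s$ by $C=C\,Gc$ gives $s=G(\mathcal{L}s)+h_s$ with $0\le h_s\le C'$. The decomposition $C'Gc-h_s=C'Gc+(-h_s)$ of a positive superharmonic function then forces $-h_s\ge 0$. This is a self-contained proof that a positive harmonic function dominated by a potential vanishes, so you avoid the greatest-harmonic-minorant theorems entirely.

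The price is that you do not get the extra content of Lemma~\ref{cor:Gc_dichotomy}: $\inf_X Gc=0$ when $Gc\ne 1$, and $Gc\le 1-u$. The paper reuses both later, in Theorem~\ref{thm:decomposition} and in the characterization of thick sets. Your closing step (i.c) $\Rightarrow$ (i) via $u_+$ and $(-u)_+$ is the same positive-part device the paper uses in (i.c) $\Rightarrow$ (i.a).

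Two small points. First, the bullet proposing a direct (ii.a) $\Rightarrow$ (i.b) via $u_+\le\varepsilon+M\,G(c1_{X\setminus K})$ is not proved. The function $M\,G(c1_{X\setminus K})-u_+$ is superharmonic, but showing it is nonnegative needs exactly the control at infinity you are trying to establish. Your final chain does not use this bullet, so drop it. Second, state explicitly that $1\in\P$ is equivalent to $Gc=1$: if $1=Gk$, then $k=\mathcal{L}1=c$.
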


We employ the  Riesz decomposition for superharmonic functions,  \cite[Theorem~2.4]{FischerKeller}.
\begin{proposition}[Riesz decomposition]
	Let $u$ be superharmonic and positive. Then there exists a unique decomposition $u = u_p + u_h$ into a potential $u_p$ and a harmonic function $u_h$. It satisfies $0 \leq u_p, u_h \leq u$.
\end{proposition}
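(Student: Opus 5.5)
The plan is to take the potential part to be $u_p = G(\mathcal{L}u)$, which forces uniqueness, and to prove that this function is finite and dominated by $u$ through a finite-exhaustion comparison argument.

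Set $k=\mathcal{L}u$. Since $u$ is superharmonic, we have $u\in\mathcal{F}$ and $k\ge 0$. The decisive step is the estimate
\begin{align*}
Gk\le u \quad\text{on } X.
\end{align*}
It shows simultaneously that $k\in\mathcal{G}$ and that $u_p:=Gk$ satisfies $0\le u_p\le u$. Because $u_p\le u$ and $u\in\mathcal{F}$, we also get $u_p\in\mathcal{F}$, so $u_p\in\mathcal{P}$. By \cite[Proposition~4]{HKPII} we then have $\mathcal{L}u_p=k$. Hence $u_h:=u-u_p$ is harmonic, and $0\le u_h\le u$ follows from $0\le u_p\le u$.

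To prove $Gk\le u$, I will use an exhaustion of $X$ by finite connected sets $X_1\subseteq X_2\subseteq\dots$ with $\bigcup_n X_n=X$. Let $G_n$ be the Green function of the Dirichlet restriction of $(b,c)$ to $X_n$. This is the graph $(b_{X_n},\,c+d_{X_n})$, whose Laplacian on the finite set $X_n$ is an invertible matrix with positive inverse $G_n$ (\cite[Lemma~0.29]{KLW}). I will use the standard fact that $G_n(x,y)$ increases monotonically to $G(x,y)$; it can be cited from \cite[Chapter~6]{KLW}, or derived from the semigroup representation $G=\int_0^\infty e^{-tL}\,dt$ together with monotone convergence of the Dirichlet semigroups on $X_n$ to $e^{-tL}$. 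For $x\in X_n$ we compute
\begin{align*}
\mathcal{L}_{X_n}(u|_{X_n})(x)=\mathcal{L}u(x)+\sum_{y\in X\setminus X_n}b(x,y)u(y)\ \ge\ k(x),
\end{align*}
where we used $u\ge 0$. Put $w=u|_{X_n}-G_n(k|_{X_n})$. Then $\mathcal{L}_{X_n}w\ge 0$ on the finite set $X_n$. Since $G_n=\mathcal{L}_{X_n}^{-1}$ is a positive operator, this gives $w\ge 0$; equivalently, one can apply the minimum principle on the finite set, using that the killing term $c+d_{X_n}$ is non-zero. Thus
\begin{align*}
\sum_{y\in X_n}G_n(x,y)k(y)\le u(x)\quad\text{for } x\in X_n.
\end{align*}
Letting $n\to\infty$ and using monotone convergence (valid because $k\ge 0$ and $G_n\uparrow G$) yields $Gk\le u$.

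For uniqueness, suppose $u=v_p+v_h$ with $v_p=Gk'$ a potential and $v_h$ harmonic. By \cite[Proposition~4]{HKPII} and $\mathcal{L}v_h=0$ we obtain $k'=\mathcal{L}v_p=\mathcal{L}u=k$. Hence $v_p=Gk=u_p$, and therefore $v_h=u_h$.

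The main obstacle is the convergence $G_n\uparrow G$. If it is not available to cite directly, I would first try to prove it via the semigroup and resolvent approximation for increasing exhaustions in \cite{KLW}. The finite-set comparison itself is routine.
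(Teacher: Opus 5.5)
Your proof is correct, but it takes a different route from the paper's. The paper does not prove existence or uniqueness itself. It cites \cite[Theorem~2.4]{FischerKeller} for both, together with $u_h\ge 0$, and then only adds the bounds: $u_p\le u$ from $u_h\ge 0$, and $u_p=G\mathcal{L}u_p\ge 0$ from $\mathcal{L}u_p=\mathcal{L}u\ge 0$.

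You instead write down the potential part explicitly as $u_p=G\mathcal{L}u$, which makes uniqueness immediate from $\mathcal{L}Gk=k$. You then prove the one nontrivial inequality, $G\mathcal{L}u\le u$, by comparison on the finite Dirichlet restrictions $(b_{X_n},c+d_{X_n})$ followed by a limit. The step you flag as the main obstacle is not one: monotonicity and pointwise convergence $G_n\uparrow G$ are exactly \cite[Proposition~1.20, Theorem~6.26]{KLW}. The paper itself uses these in the proof of Lemma~\ref{lemma:resolvent}, and pointwise convergence plus Fatou's lemma would already suffice.

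Your route gives a short, self-contained argument that needs only finite-volume approximation and \cite[Proposition~4]{HKPII}. It also yields more than the statement. Run the same comparison on the positive superharmonic function $u-s$, where $s\le u$ is subharmonic. This gives $G\mathcal{L}u\le G\mathcal{L}(u-s)\le u-s$, so $u_h$ is the greatest subharmonic minorant of $u$, and in particular a potential $Gk$ with $k\ge0$ has $0$ as its greatest harmonic minorant. These are precisely the further facts from \cite{FischerKeller} (Theorems~3.4 and~3.5) that the paper invokes in Lemma~\ref{cor:Gc_dichotomy}.

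The citation, in turn, buys brevity. It also places the statement inside the general potential theory of \cite{FischerKeller}, where these minorant properties come packaged with it.
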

\begin{proof}
	See \cite[Theorem~2.4]{FischerKeller} for the existence and uniqueness of the decomposition and the non-negativity of $u_h$. The latter implies $u_p \leq u$. The potential satisfies $\L u_p = \L u \geq 0$ and so $u_p = G \L u_p \geq 0$ which implies $u_h \leq u$.
\end{proof}

On the first glance, it is not clear that $c\in \mathcal{G}$ and, therefore, that $Gc$ is finite. 
However, one even obtains $Gc \leq 1$ by the Riesz decomposition which also gives a dichotomy for $Gc$  being equal to $1$ or strictly smaller than $1$.

\begin{lemma}
	\label{cor:Gc_dichotomy} We have $c \in \G$ and $Gc \leq 1$. More specifically, we have either $Gc = 1$ or $0=\inf_{  X} Gc  \le Gc<1$. Moreover,   $G c \leq 1  - u$ for any   subharmonic function $u\le 1$.
\end{lemma}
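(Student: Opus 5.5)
The plan is to apply the Riesz decomposition to the constant function $1$. Since $\L 1 = c \ge 0$, the function $1$ is positive and superharmonic. The Riesz decomposition gives $1 = u_p + u_h$ with $u_p$ a potential, $u_h$ harmonic and $0\le u_p,u_h\le 1$. Write $u_p = Gk$ with $k\in\G$. By \cite[Proposition~4]{HKPII} we have $\L u_p = k$, and also $\L u_p = \L 1 - \L u_h = c$. Hence $k=c$, so $c\in\G$ and $Gc = u_p \le 1$. In particular $u_h = 1-Gc$ is a positive bounded harmonic function.

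For the dichotomy, suppose $Gc\neq 1$, so that $h:=1-Gc\ge 0$ is harmonic and not identically zero. By connectedness and the minimum principle for positive harmonic (superharmonic) functions, $h>0$ everywhere; hence $Gc<1$ everywhere. The remaining task is to show $\inf_X Gc = 0$, equivalently $\sup_X h = 1$. Set $m=\sup_X h\in(0,1]$. Then $h/m\le 1$ is harmonic, hence subharmonic, so once the last claim of the lemma is proven we get $Gc\le 1-h/m$. This means $1-h\le 1-h/m$, i.e.\ $h/m\le h$, which forces $m\ge 1$. Thus $m=1$ and $\inf_X Gc = 1-\sup_X h = 0$. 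So the dichotomy reduces to the final assertion.

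For the final assertion, let $u\le 1$ be subharmonic. Then $v:=1-u\ge 0$ satisfies $\L v = c - \L u \ge c\ge 0$, so $v$ is positive superharmonic. Its Riesz decomposition is $v=v_p+v_h$ with $v_h\ge 0$ harmonic. Writing $v_p = Gk$ with $k=\L v_p=\L v\ge c\ge 0$, monotonicity of the Green operator on positive functions gives $Gc\le Gk = v_p\le v = 1-u$.

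The main obstacle I anticipate is the identification $\L u_p = \L v$ inside the Riesz decomposition, together with the fact that the potential of a positive superharmonic function is exactly $G\L v$. This is where convergence issues ($k\in\G$, the series defining $\L$) could hide, but it is supplied by the cited Riesz decomposition and \cite[Proposition~4]{HKPII}. A minor point is the strict positivity of $h$, which uses the standard minimum principle on connected graphs: if $h(x)=0$ then $0=\L h(x) = -\sum_y b(x,y)h(y)$ forces $h=0$ at all neighbours of $x$.
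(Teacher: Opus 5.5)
Your proof is correct, but it takes a different route from the paper's after the first step. The first step is shared: both proofs apply the Riesz decomposition to $1$ and obtain $c\in\G$, $Gc\le 1$, and the alternative $Gc=1$ or $Gc<1$. The paper uses the Harnack inequality for that alternative, where you use the elementary minimum principle.

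The paper then argues as follows.
\begin{itemize}
\item To get $\inf_X Gc=0$, it assumes $Gc\ge\alpha>0$ and writes $(1-\alpha)Gc\ge\alpha(1-Gc)$. It then uses that a potential has $0$ as its greatest harmonic minorant \cite[Theorem~3.5]{FischerKeller}.
\item It proves the final assertion separately with \cite[Theorem~3.4]{FischerKeller}: $1-Gc$ is the greatest harmonic minorant of $1$, and $u$ is a subharmonic minorant of $1$.
\end{itemize}

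You reverse the order.
\begin{itemize}
\item You prove the final assertion first. You apply the Riesz decomposition to $1-u$, note that $\L(1-u)\ge c$, and use the positivity of the Green kernel.
\item You then get $\inf_X Gc=0$ as a corollary, by applying that assertion to the rescaled harmonic function $h/\sup_X h$. In effect, the harmonic part $h(1-1/\sup_X h)$ of $1-h/\sup_X h$ must be nonnegative.
\end{itemize}

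Your version needs only one black box: the Riesz decomposition with nonnegative harmonic part, together with $\L Gk=k$. It also makes the structure transparent, since the case $u=0$ already yields $c\in\G$ and $Gc\le1$. The paper's version is shorter on the page, but it relies on the additional harmonic-minorant theorems of Fischer--Keller.
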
	
\begin{proof}
Without loss of generality, we can assume $c \neq 0$, so that $(b,c)$ is transient. 
We have $\L 1 = c \geq 0$ and so by the Riesz decomposition there exist a potential $u_p$ and a harmonic function $u_h$ such that $1 = u_p + u_h$ with $0\le u_p,u_h \leq 1$. Then
	\begin{align*}
		u_p = G \L u_p = G \L 1 = Gc, 
	\end{align*}
	and so $c \in \G$ and $Gc \leq 1$. It follows that $u_h = 1 - u_p = 1 - Gc$, and since $u_h$ is harmonic and positive it must either hold that $u_h = 0$, i.e.,\@ $Gc = 1$, or that $u_h > 0$, i.e.\@ $Gc < 1$ by the Harnack inequality \cite[Theorem~4.1]{KLW}. If $Gc = 1$ then $1$ is a potential by definition, and if $1 \in \P$ then $1 = G \L 1 = Gc$.
Assume now that there exists $0 < \alpha \leq 1$ with $\alpha \leq Gc$. Then
$$(1-\alpha) G c \geq \alpha (1-Gc).$$

The left side is a nonnegative potential and the right side is a nonnegative harmonic function. Since potentials have $0$ as largest harmonic minorant \cite[Theorem~3.5]{FischerKeller}, we infer $\alpha(1-Gc) = 0$, i.e., $Gc = 1$.

For the last part, since $1 = Gc + (1 - Gc)$, the function $1 - Gc$ is the greatest harmonic minorant of $1$ and $u$ is a subharmonic minorant of $1$. Hence,  \cite[Theorem~3.4]{FischerKeller} implies $u \leq 1 - G c$.
\end{proof}

\begin{proof}[Proof of Theorem~\ref{thm:Liouville}]
(i) $\Rightarrow $ (ii.a): Since $\L (1-Gc) = 0$, assumption (i) implies that $1 - Gc = 0$ and so $1 = Gc$. The ``for some'' and ``for all'' equivalence follows from Lemma~\ref{cor:Gc_dichotomy}.\medskip

(ii.a) $\Rightarrow$ (i.b): Let $u$ be bounded and subharmonic, without loss of generality $u \leq 1$. Then $Gc \leq 1-u$ by Lemma~\ref{cor:Gc_dichotomy} and if (ii.a) holds then it implies that $u \leq 0$.\medskip

(i.b) $\Rightarrow$ (i.c): If $u$ is bounded, positive and subharmonic then it must also be negative by (i.b) and thus $u = 0$.\medskip

(i.c) $\Rightarrow $ (i.a):  Let $u$ be a bounded superharmonic function. Then, $ -u $ is a bounded subharmonic function and so is the positive part of $(-u)_+$, see \cite[Lemma~1.9]{KLW}. By (i.c) it must be zero and so $u \geq 0$. 
By the Riesz decomposition theorem $u=u_h+u_p$ with a bounded harmonic part $u_h$ which must be zero by (i.c).
Therefore, $u=u_p $ is a potential.\medskip

(i.a) $\Rightarrow $ (i):  Assuming (i.a) holds, (i) follows since $0$ is the only harmonic potential.
\end{proof}
 
\begin{remark}[Stochastic completeness at infinity as a special case of the Liouville property] In Section~\ref{sec:Heat} we will take a closer look at the relationship between stochastic completeness at infinity and the Liouville property. However, already at this point we make the observation that for a given $\alpha > 0$, the graph $(b,c + \alpha)$ satisfies the Liouville property if and only $(b,c)$ is stochastically complete at infinity:
according to \cite[Theorem~7.2~(v.b)]{KL12}, stochastic completeness at infinity is equivalent to triviality of every bounded $u$ with $(\mathcal L + \alpha) u = 0$ for some (all) $\alpha > 0$, cf.~\cite[Theorem~7.2~(v.a)]{KL12} for subsolutions. Indeed,  \cite[Theorem~7.2~(ii)]{KLW} or  \cite{MS20} for Riemannian manifolds, also show that this is equivalent    $G^{b,c + \alpha} (c + \alpha)= (L+\alpha)^{-1}(c + \alpha ) = 1$.  Hence, the known characterizations of stochastic completeness at infinity are precisely the content of Theorem~\ref{thm:Liouville} for potentials that are bounded from below by a positive constant. Remarkably, this characterization is so stable that it can be extended to certain nonlinear discrete Schrödinger operators with potentials bounded below by a positive constant, see \cite[Theorem 4.2]{SZ25}.

 The PhD thesis \cite[Theorem~4.55]{Sch17} of the fourth author, which contains Theorem~\ref{thm:Liouville} as a special case, removed the necessity of $c$ being bounded below by a positive constant. The proof there is more complicated than the one presented here and it relies on a maximum principle similar to  the approach in \cite[Chapter~7]{KL12} rather than the Riesz decomposition.
\end{remark}

\section{The Green operator perspective}\label{sec:Green}

In this section we take a closer look at (ii) in Theorem~\ref{thm:main}.  
To this end, we need a bit of preparation. For a finite subset $K \subseteq X$, we denote by $L_K$ the Laplace operator associated to the graph $(b_K, c_K)$ over $K$ where $b_K(x,y) = b(x,y)$, $c_K(x) = c(x) + d_K(x)$, with the boundary term given by
$$d_K(x) = \sum_{y \in X \setminus K} b(x,y)$$ for $x \in K$. The operator $L_K$ is injective \cite[Lemma~0.29]{KLW} and we denote the Green operator by $G_K = (L_K)^{-1}$. 

 An exhaustion of $X$ is a sequence $(K_n)$ of finite subsets of $X$ such that $K_n \subseteq K_{n+1}$ for all $n \in \N$ and $\bigcup_{n \in \N} K_n = X$.

 Furthermore, we  associate a random walk to the graph $(b,c)$ by defining the transition probabilities $p(x,y) = b(x,y)/\deg(x)$, where $\deg(x) = \sum_{y \in X} b(x,y)+ c(x)$ is the degree of $x$. We denote by $p_n(x,y)$ the $n$-step transition probabilities of the random walk, i.e.,\@ $p_0(x,y) = 1_{x=y}$ and $p_{n+1}(x,y) = \sum_{z \in X} p_n(x,z)p(z,y)$ for $n \geq 0$.
 We write $\mathbb{P}_x$ for the probability measure of the random walk starting at $x$. We denote  $$\tau  = \inf\{n \geq 0 \mid Y_n \notin X\},$$ where $Y_n$ is the position of the random walk at time $n$. Note that this is possible since $\sum_{y \in X} p(x,y) \leq 1$ for all $x \in X$ and, therefore, the random walk can leave the graph at $x$ with positive probability whenever $c(x) > 0$. For an in depth background on random walks see e.g. \cite{Woe00}. 

\begin{theorem}[Green operator perspective]
		\label{thm:Green}
	Let $(b,c)$ be a connected transient graph over $X$.  
Then 
the following assertions are equivalent.
\begin{itemize}
\item[(ii.a)] $Gc(x) = 1$ for some (all) $x \in X$, i.e., $1\in\P$.
\item[(ii.b)] For some (all) exhaustion(s) $(K_n)$ of $X$ it holds that for some (all) $x \in X$ 
$$\lim_{n \to \infty}  (G_{K_n}d_{K_n})(x) =0.$$ 
\item[(ii.c)] $\mathbb {P}_x(\tau<\infty) = 1$ for some (all) $x \in X$.
\end{itemize}
\end{theorem}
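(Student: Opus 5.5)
The plan is to establish, on a finite set $K$, the decomposition
\begin{align*}
1 = G_K c + G_K d_K \quad \text{on } K.
\end{align*}
Here we regard $c$ and $d_K$ as restricted to $K$. This holds because $L_K 1_K = c + d_K$ on $K$ (the formal Laplacian of $(b_K,c_K)$ applied to the constant $1$ equals $c_K = c + d_K$) and $L_K$ is injective. Next we use the standard monotone approximation of the Green function along an exhaustion $(K_n)$: extending $G_{K_n}(x,y)$ by zero, it increases in $n$ and converges pointwise to $G(x,y)$. This is the usual fact that $G_K(x,y)$ is the expected number of visits to $y$ before leaving $K$. Analytically, it follows from the domain monotonicity of Dirichlet restrictions together with $G_{K_n} 1_y \to G_y$ in $\mathcal{D}_0$; I would cite the corresponding statement in \cite{KLW}. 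By monotone convergence, $G_{K_n} c \nearrow G c$ pointwise. Hence $G_{K_n} d_{K_n} = 1 - G_{K_n} c \to 1 - Gc$ pointwise, and this limit exists and is independent of the exhaustion. So (ii.b) for some exhaustion and some $x$ is equivalent to $Gc(x) = 1$. By Lemma~\ref{cor:Gc_dichotomy}, $Gc(x)=1$ at a single point already forces $Gc = 1$ everywhere, which gives the ``for some/for all'' statements and (ii.a) $\Leftrightarrow$ (ii.b).

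For (ii.c), I will identify $Gc(x)$ probabilistically. Write $P = (p(x,y))$. The Green function of the walk is $g(x,y) = \sum_n p_n(x,y)$, and the relation is $G(x,y) = g(x,y)/\deg(y)$. To see this, note that $\mathcal{L} = \deg\cdot(I-P)$, so $\mathcal{L}$ applied to $g(\cdot,y)/\deg(y)$ gives $1_y$; to identify this with $G_y$, use the finite-set versions $G_K = (\deg(I-P_K))^{-1}$ and pass to the limit monotonically, exactly as above. Then
\begin{align*}
Gc(x) = \sum_y \sum_n p_n(x,y)\,\frac{c(y)}{\deg(y)}.
\end{align*}
Now $c(y)/\deg(y)$ is the probability of being killed at $y$ in one step, namely $1 - \sum_z p(y,z)$. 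The sum therefore equals $\sum_n \mathbb{P}_x(Y_n \in X,\, Y_{n+1} \notin X)$, which is $\mathbb{P}_x(\tau < \infty)$. Thus $Gc(x) = \mathbb{P}_x(\tau<\infty)$, giving (ii.a) $\Leftrightarrow$ (ii.c) pointwise, and the quantifiers again follow from Lemma~\ref{cor:Gc_dichotomy}.

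The main obstacle is justifying the monotone convergence $G_{K_n}(x,y) \nearrow G(x,y)$ with the paper's definition of $G$ via $\mathcal{D}_0$, rather than via the walk. I would first try to cite \cite{KLW} for this. Failing that, I would argue directly. The functions $G_{K_n}1_y$ (extended by zero) are increasing by the minimum principle on finite sets. They are bounded by $G_y$, since $G_y$ is superharmonic and positive on $K_n$ and dominates zero outside. Their limit is a positive solution of $\mathcal{L}u = 1_y$ with $u \le G_y$. Finally, $G_y - u$ is a positive harmonic function dominated by a potential, so it vanishes because potentials have $0$ as their greatest harmonic minorant \cite[Theorem~3.5]{FischerKeller}.
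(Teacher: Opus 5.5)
Your proposal is correct and is essentially the paper's proof. The finite-volume identity $G_K(c+d_K)=1$ on $K$, combined with the monotone approximation $G_{K_n}\nearrow G$ (which the paper takes from \cite[Theorems~6.26 and~6.29]{KLW}), gives $Gc=1-\lim_{n}G_{K_n}d_{K_n}$. The identification $G(x,y)=\sum_n p_n(x,y)/\deg(y)$ (cited in the paper from \cite[Theorem~3.4]{Schm} or \cite[Theorem~6.35]{KLW}) gives $Gc(x)=\mathbb{P}_x(\tau<\infty)$, and in both your version and the paper's the quantifiers are handled by Lemma~\ref{cor:Gc_dichotomy}. Your self-contained fallback argument for the monotone approximation is also sound: comparison on $K_n$, domination by $G_y$, and the vanishing greatest harmonic minorant of the potential $G_y$.
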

To show the theorem, we first prove the following formula which gives the equivalence of (ii.a) and (ii.b).
\begin{lemma}
	\label{lem:Green} Let $(b,c)$ be a connected transient graph over $X$.  Then, for any exhaustion $(K_n)$ of $X$ and all $x \in X$,
\begin{align*}
	Gc(x) = 1 - \lim_{n\to\infty} (G_{K_n}d_{K_n})(x).
\end{align*}
\end{lemma}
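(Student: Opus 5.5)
The plan is to compute $G_{K_n}$ applied to the constant function on $K_n$, and then pass to the limit $n\to\infty$ using the monotone convergence of the finite-volume Green functions to $G$.

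\emph{Step 1 (identity on finite sets).} For the constant function $1$ on $K=K_n$, the formal Laplacian of $(b_K,c_K)$ gives
\[ L_K 1(x)=\sum_{y\in K}b(x,y)(1-1)+c(x)+d_K(x)=c(x)+d_K(x). \]
Since $L_K$ is injective, $G_K=L_K^{-1}$ and
\[ 1=G_K(c|_K)(x)+G_K d_K(x) \qquad \text{for all } x\in K. \]
Here $G_K(x,y)\ge 0$ by the finite-dimensional minimum principle, so both terms are nonnegative.

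\emph{Step 2 (convergence of Green functions).} Extend $G_{K_n}(x,y)$ by zero outside $K_n\times K_n$. Then
\[ G_{K_n}(x,y)\nearrow G(x,y) \quad\text{as } n\to\infty. \]
Monotonicity is a domain-monotonicity consequence of the maximum principle: $G_{K_{n+1}}1_y-G_{K_n}1_y$ is $\mathcal{L}$-harmonic on $K_n$ and nonnegative on $K_{n+1}\setminus K_n$. Identification of the limit with $G$ is standard for transient graphs, for example via \cite[Theorem~6.29]{KLW}, or via the random walk representation $G_K(x,y)\deg(y)=\mathbb{E}_x[\#\text{visits to }y\text{ before leaving }K]$.

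By monotone convergence, $G_{K_n}(c|_{K_n})(x)=\sum_{y\in K_n}G_{K_n}(x,y)c(y)\nearrow Gc(x)$, which is finite by Lemma~\ref{cor:Gc_dichotomy}. The identity from Step 1 then shows that $\lim_n G_{K_n}d_{K_n}(x)$ exists, is decreasing in $n$, and equals $1-Gc(x)$.

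The main obstacle is Step 2, specifically justifying that the increasing limit of $G_{K_n}$ is exactly $G$ rather than some smaller kernel. I would take this from the standard approximation result in \cite{KLW}. Failing that, I would argue that $\tilde G_y:=\lim_n G_{K_n}1_y$ satisfies $\mathcal{L}\tilde G_y=1_y$, by passing to the limit in the finite equations using monotone convergence and $\tilde G_y\le G_y$. The bound $\tilde G_y\le G_y$ itself comes from the maximum principle on $K_n$, since $G_y\ge 0$ is superharmonic off $y$. Then $G_y-\tilde G_y$ is a nonnegative harmonic function bounded by the potential $G_y$, so it vanishes, because potentials have $0$ as largest harmonic minorant \cite[Theorem~3.5]{FischerKeller}.
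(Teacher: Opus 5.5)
Your proposal is correct and follows essentially the same route as the paper: both use the finite-volume identity $G_K c_K = 1$ coming from $L_K 1 = c_K$, split $c_K = c + d_K$ on $K$, and pass to the limit using the monotone approximation $G_{K_n}\nearrow G$ from \cite[Theorem~6.26 and Theorem~6.29]{KLW}. Your fallback for identifying the limit kernel with $G$ is a valid self-contained replacement for that citation: the finite-volume Green functions increase, the limit satisfies $\mathcal{L}\tilde G_y = 1_y$ and $\tilde G_y \le G_y$, and a nonnegative harmonic minorant of the potential $G_y$ must vanish.
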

\begin{proof}
	We first observe that for finite subsets $K \subseteq X$, we have
	\begin{align*}
	\sum_{y\in K}G_K(x,y) {c_K}(y) &=L_K^{-1} {c_K}(x)=1
	\end{align*}
	since $L_K1 = c_K$ on $K$. Hence, with the projection $\pi_K\colon C(X) \to C(K)$ defined by $\pi_K f = f|_K$ for $f \in C(X)$, we have, for all $x\in X$ large enough, by \cite[Theorem~6.26~and Theorem~6.29]{KLW} and $c_K=c+d_K$ on $K$ that
	\begin{align*}
		Gc(x) &=\lim_{n\to\infty} (G_{K_n}\pi_{K_n}{c})(x) =\lim_{n\to\infty} \sum_{y\in K_n}G_{	K_n}(x,y) {c(y)}\\
		&=\lim_{n\to\infty} \sum_{y\in K_n}G_{K_n}(x,y) {c_{K_n}(y)} - \lim_{n\to\infty} \sum_{y\in K_n}G_{K_n}(x,y) {d_{K_n}(y)}\\
		&=1 - \lim_{n\to\infty} (G_{K_n}{d_{K_n}})(x).\hfill\qedhere
	\end{align*}
\end{proof}

Next, we show a formula which gives a probabilistic interpretation of $Gc$ as the probability that the random walk leaves the graph in finite time which gives the equivalence of (ii.a) and (ii.c) in Theorem~\ref{thm:Green}.
\begin{lemma}
	\label{lem:Green3} Let $(b,c)$ be a connected transient graph over $X$.  Then, for all $x \in X$,
\begin{align*}
	Gc(x) = \mathbb{P}_x(\tau < \infty).
\end{align*}
\end{lemma}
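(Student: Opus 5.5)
The idea is to compute the probability that the walk is absorbed (killed) at a given vertex, and then sum over vertices. I would use the standard Green function of the walk with substochastic transition matrix $P=(p(x,y))$, namely the expected number of visits
\[
g(x,y)=\sum_{n\ge 0} p_n(x,y)=\mathbb{E}_x\Big[\sum_{n\ge0}1_{\{Y_n=y\}}\Big].
\]
The key identity to establish is $G(x,y)=g(x,y)/\deg(y)$. Given this, the lemma follows quickly. The walk leaves the graph at step $n+1$ from $y$ with probability $p_n(x,y)\,c(y)/\deg(y)$, because from $y$ the killing probability is $1-\sum_z p(y,z)=c(y)/\deg(y)$. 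The events $\{\tau=n+1,\ Y_n=y\}$ are disjoint, so summing over $n$ and $y$ gives
\[
\mathbb{P}_x(\tau<\infty)=\sum_{y}\sum_{n\ge0}p_n(x,y)\frac{c(y)}{\deg(y)}=\sum_y G(x,y)c(y)=Gc(x),
\]
where Tonelli justifies the interchange since all terms are nonnegative.

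\textbf{Identifying $G$ with $g/\deg$.} I would go through finite exhaustions, using that $G(x,y)=\lim_n G_{K_n}(x,y)$ monotonically (the same facts from \cite[Theorems~6.26 and 6.29]{KLW} already used in Lemma~\ref{lem:Green}). On a finite $K$, write $L_K=D_K(I-P_K)$, where $D_K$ is multiplication by $\deg$ and $P_K$ is $P$ restricted to $K\times K$. This holds because $\deg_K(x)=\sum_{y\in K}b(x,y)+c(x)+d_K(x)=\deg(x)$, so the degree does not change under restriction. Since $L_K$ is injective, $I-P_K$ is invertible. The spectral radius of the substochastic matrix $P_K$ is strictly less than $1$: $P_K$ is substochastic, and by connectedness, if $X\neq K$ or $c\neq 0$, every vertex reaches a defective row. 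Hence the Neumann series converges and
\[
G_K(x,y)=\sum_{n\ge0}p^{K}_n(x,y)\,\deg(y)^{-1},
\]
where $p^K_n$ are the $n$-step probabilities of the walk killed upon exiting $K$. By monotone convergence, $p^{K_n}_m(x,y)\uparrow p_m(x,y)$ for every $m$, since paths staying in $K_n$ exhaust all paths. Letting $n\to\infty$ gives $G(x,y)=g(x,y)/\deg(y)$.

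\textbf{Main obstacle.} The main subtlety is justifying the finite-set identity and the limit, specifically checking that the boundary term $d_K$ enters only as killing and does not alter $\deg$, and that the monotone limit of $G_{K_n}$ is indeed $G$. Both are supplied by the cited KLW results. Alternatively, one could check directly that $h(x)=\mathbb{P}_x(\tau<\infty)$ satisfies $\mathcal{L}h=c$ and is the minimal positive solution, which identifies $h$ with the potential $Gc$. I would keep this as a backup argument.
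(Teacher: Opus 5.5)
Your proposal is correct and follows the paper's proof: you split $\{\tau<\infty\}$ by the time $n$ and vertex $y$ at which the walk is killed to get $\sum_{n}\sum_{y} p_n(x,y)\,c(y)/\deg(y)$, and then identify $\sum_n p_n(x,y)/\deg(y)$ with $G(x,y)$. The only difference is that the paper cites this last identity (\cite[Theorem~6.35]{KLW} or \cite[Theorem~3.4]{Schm}), whereas you derive it yourself from $L_K=D_K(I-P_K)$, the Neumann series on finite sets and monotone convergence along an exhaustion, and that derivation is valid.
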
	
\begin{proof}
	We first observe that $\mathbb{P}_x(Y_1\not\in X) = c(x)/\deg(x)$ for all $x \in X$. Then, we compute, for all $x \in X$,
\begin{multline*}
	\mathbb{P}_x(\tau < \infty)  = \sum_{n=0}^\infty \mathbb{P}_x(\tau =n+1) 
	 = \sum_{n=0}^\infty \sum_{y \in X} \mathbb{P}_x(Y_{n}=y ) \mathbb{P}_y(Y_1\not\in X)\\
	 =\sum_{n=0}^\infty \sum_{y \in X} p_n(x,y) \frac{c(y)}{\deg(y)} 
 = Gc(x),
\end{multline*}	
where we used	\cite[Theorem 3.4]{Schm} or \cite[Theorem 6.35]{KLW} for the last equality.
\end{proof}

\begin{proof}[Proof of Theorem~\ref{thm:Green}] 
	The independence of $x$ in (ii.a) was shown above in Lemma~\ref{cor:Gc_dichotomy}. Hence, it suffices to show the equivalence of (ii.a), (ii.b), and (ii.c) for fixed $x \in X$.

	The equivalence of (ii.a) and (ii.b)  follows from Lemma~\ref{lem:Green} and  the equivalence of (ii.a) and (ii.c) follows from Lemma~\ref{lem:Green3}.
\end{proof}

\begin{remark}
	The two lemmas above can be understood in view of the Riesz decomposition of the constant function $1=u_p+u_h$ into a potential and a harmonic function for all $x \in X$ as
	\begin{align*}
	u_p(x)= \mathbb{P}_x(\tau < \infty),\quad\mbox{ and }\quad	
	u_h(x)=\lim_{n \to \infty}  (G_{K_n}d_{K_n})(x).
	\end{align*} 
\end{remark}

\section{The stochastic completeness  perspective}\label{sec:Heat}
In this section we give the proof of the equivalence of (ii) and (iii) in Theorem~\ref{thm:main} and discuss the criterion (iii) in more detail. To this end recall that the semigroup $e^{-tL}$ extends to $\ell^\infty(X)$ and that the graph is stochastically complete if $e^{-tL}1 = 1$ for all $t \geq 0$. Since we are interested in the case where we have $c\ne0$, the  graph is necessarily stochastically incomplete. In \cite{KL12} a concept called stochastic completeness at infinity was introduced which means that for some (equivalently all) $t >0$ and for some (equivalently all) $x \in X$, we have
\begin{align*}M_t(x) = e^{-tL}1(x) \, + \, \int_0^{t} ( e^{-sL} {c})(x) \,ds=1.
\end{align*}
While $ e^{-tL}1(x) $ may be smaller than $1$ for some $t >0$ and $x \in X$, one adds to it the  amount of heat which was removed by the killing 	term up to time $t$. Therefore, 
the  condition $M_t(x)=1$ means that the amount of heat lost from the graph is exactly the amount removed by the killing term. We  also refer to \cite[Section~7.5]{KLW} for a more detailed discussion of this concept and its relation to the classical concept of stochastic completeness.

We furthermore recall the probabilistic interpretation of the semigroup for   a graph $b$ without killing term  in terms of the associated Markov process $(\mathbb{X}_t)$ with lifetime $\zeta$, see \cite[Chapter~2.5]{KLW}. For a graph $ (b,c) $, we then have the following Feynman-Kac formula, \cite[Theorem~2.31]{KLW} for the semigroup 
\begin{align*}
e^{-tL}f(x) = \mathbb{E}_x\Big(1_{\{ \zeta > t \}} e^{-\int_0^t c(\mathbb{X}_s) ds} f(\mathbb{X}_t)\Big).
\end{align*}

We will prove the following theorem which shows the equivalence of (ii) and (iii) in Theorem~\ref{thm:main} and also gives a further probabilistic interpretation of the condition in (iii).

\begin{theorem}[The stochastic completeness perspective]
		\label{thm:Heat}
Let $(b,c)$ be a connected transient graph over $X$. 
Then, the following assertions are equivalent.
\begin{itemize}
\item[(ii.a)] $Gc(x) = 1$ for some (all) $x \in X$.
\item[(iii.a)]  The graph is stochastically complete at infinity and for some (all) $x \in X$ 
$$\lim_{t\to\infty} e^{-tL}1(x) =\lim_{t\to\infty} \sum_X    e^{-tL}1_x= 0.$$
\item[(iii.b)]   We have for some (all) $x \in X$, 
\begin{align*}
	\mathbb{P}_x \Big( \int_0^{\zeta} c(\mathbb{X}_t) dt = \infty \Big) = 1.
\end{align*}
\end{itemize}	
\end{theorem}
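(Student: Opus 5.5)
The plan is to pass to the limit $t\to\infty$ in the defining identity of $M_t$ and to read (iii.b) off a Feynman--Kac representation of $Gc$.

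\emph{Key identity.} By monotone convergence,
\[
\int_0^t e^{-sL}c(x)\,ds\;\uparrow\;\int_0^\infty e^{-sL}c(x)\,ds=\sum_y\int_0^\infty e^{-sL}1_y(x)\,ds\,c(y)=Gc(x).
\]
The middle equality uses the Laplace transform representation $G(x,y)=\int_0^\infty e^{-tL}1_y(x)\,dt$ together with Tonelli. Moreover $t\mapsto e^{-tL}1(x)$ is non-increasing. Indeed, $e^{-(t+s)L}1=e^{-tL}e^{-sL}1\le e^{-tL}1$, because $e^{-sL}1\le 1$ and the semigroup is positivity preserving. Hence $\lim_{t\to\infty}e^{-tL}1(x)=:m(x)$ exists.

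The standard theory (\cite{KL12}, \cite[Section~7.5]{KLW}) gives $M_t\le 1$, with $t\mapsto M_t(x)$ non-increasing. Letting $t\to\infty$ yields
\[
\lim_{t\to\infty}M_t(x)=m(x)+Gc(x).
\]

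\emph{Proof of (iii.a) $\Rightarrow$ (ii.a).} Stochastic completeness at infinity gives $M_t\equiv 1$. Combined with $m(x)=0$, this gives $Gc(x)=1$.

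\emph{Proof of (ii.a) $\Rightarrow$ (iii.a).} Assume $Gc(x)=1$. From $M_t(x)\le 1$ we get $m(x)+1\le 1$, so $m(x)=0$. Next, $M_t(x)\ge \int_0^t e^{-sL}c(x)\,ds$ and the right-hand side increases to $Gc(x)=1$, so $\lim_t M_t(x)=1$. Since $M_t(x)$ is non-increasing and bounded by $1$, this forces $M_t(x)=1$ for all $t$, which is stochastic completeness at infinity.

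The equality $e^{-tL}1(x)=\sum_X e^{-tL}1_x$ follows from symmetry of the semigroup kernel, i.e.\ self-adjointness. The ``some/all'' statements follow from the corresponding statements in Lemma~\ref{cor:Gc_dichotomy} and in the definition of stochastic completeness at infinity.

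\emph{Probabilistic condition (iii.b).} Let $A_t=\int_0^t c(\mathbb X_s)\,ds$. By the Feynman--Kac formula and Fubini,
\[
\int_0^\infty e^{-sL}c(x)\,ds=\mathbb E_x\Big(\int_0^{\zeta}c(\mathbb X_s)e^{-A_s}\,ds\Big)=\mathbb E_x\big(1-e^{-A_\zeta}\big),
\]
since $\frac{d}{ds}(-e^{-A_s})=c(\mathbb X_s)e^{-A_s}$. Thus $Gc(x)=1-\mathbb E_x\big(e^{-A_\zeta}\big)$. Because $e^{-A_\zeta}\ge 0$, we have $Gc(x)=1$ if and only if $A_\zeta=\infty$ almost surely, which is (iii.b).

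\emph{Main obstacle.} I expect the main difficulty to be justifying the monotonicity $M_t\le 1$ with $M_t$ non-increasing. If it is not quotable, I would derive it directly from the semigroup identity
\[
\frac{d}{dt}M_t=-e^{-tL}\big(\mathcal L 1-c\big),
\]
where $\mathcal L1-c=0$ formally, and then verify the inequality via finite exhaustions $K_n$ as in Lemma~\ref{lem:Green}. There $M_t^{K_n}=1-\int_0^t e^{-sL_{K_n}}d_{K_n}\,ds\le 1$, and $e^{-tL_{K_n}}\uparrow e^{-tL}$. Alternatively, the Feynman--Kac formula gives directly
\[
M_t(x)=\mathbb E_x\big(1_{\{\zeta>t\}}e^{-A_t}\big)+\mathbb E_x\big(1-e^{-A_{t\wedge\zeta}}\big),
\]
which is manifestly $\le 1$ and non-increasing in $t$. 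I would use this route.
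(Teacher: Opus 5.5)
Your proof is correct. For (ii.a)$\Leftrightarrow$(iii.a) it is essentially the paper's argument. The paper also proves $Gc=\lim_t M_t-\lim_t e^{-tL}1$ (Lemma~\ref{lem:heat}) from the Laplace-transform representation of $G$ and Fubini. It then uses $0\le e^{-tL}1\le M_t\le 1$ with $M_t$ non-increasing, quoted from \cite[Theorem~7.14]{KLW}.

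Your treatment of (iii.b) takes a genuinely different route. The paper never links (iii.b) to $Gc$ directly; it goes through (iii.a), using:
\begin{itemize}
\item the path characterization of stochastic completeness at infinity in \cite[Theorem~7.33]{KLW};
\item the dominated-convergence limit $\lim_t e^{-tL}1(x)=\mathbb{E}_x(1_{\{\zeta=\infty\}}e^{-A_\infty})$;
\item a case analysis with conditional probabilities on $\{\zeta<\infty\}$ and $\{\zeta=\infty\}$.
\end{itemize}
You instead integrate the Feynman--Kac formula in time and use the pathwise identity $\int_0^\zeta c(\mathbb{X}_s)e^{-A_s}\,ds=1-e^{-A_\zeta}$. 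This gives $Gc(x)=1-\mathbb{E}_x(e^{-A_\zeta})$, which settles (ii.a)$\Leftrightarrow$(iii.b) at each fixed $x$ in one line. This identity is the continuous-time counterpart of Lemma~\ref{lem:Green3}. Your companion formula $M_t(x)=1-\mathbb{E}_x(1_{\{\zeta\le t\}}e^{-A_\zeta})$ also reproves the facts the paper quotes from \cite{KLW}: $M_t\le 1$, its monotonicity, and Theorem~7.33.

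Splitting $\mathbb{E}_x(e^{-A_\zeta})$ over $\{\zeta<\infty\}$ and $\{\zeta=\infty\}$ gives exactly $1-\lim_t M_t$ and $\lim_t e^{-tL}1(x)$. So the paper's route shows explicitly how (iii.b) splits into the two conditions of (iii.a), while yours is more self-contained and avoids conditioning on possibly null events.

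Two routine points should be stated explicitly. First, Feynman--Kac for the possibly unbounded $c\ge 0$ follows by monotone approximation with $c1_{K_n}$. Second, $A_s<\infty$ for $s<\zeta$, since there are only finitely many jumps before $\zeta$, so the chain rule applies on $[0,t]$ for each $t<\zeta$, followed by monotone convergence as $t\uparrow\zeta$.
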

We start by showing a formula relating $ Gc $ with the limit of the difference of  $ M_t $ and $ e^{-tL}1 $ as $ t \to \infty $.
\begin{lemma}
	\label{lem:heat} Let $(b,c)$ be a connected transient graph over $X$.  Then, for all $x \in X$,
\begin{align*}
	Gc(x) =\lim_{t \to \infty} M_t(x)  - \lim_{t \to \infty} e^{-tL}1(x),
\end{align*}
where both limits on the right-hand side exist.
\end{lemma}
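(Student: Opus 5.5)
We need $Gc(x)=\lim_{t\to\infty} M_t(x) - \lim_{t\to\infty} e^{-tL}1(x)$. The plan is to treat the two terms in $M_t(x)$ separately and identify the integral term with the Green operator via the Laplace transform representation $G(x,y)=\int_0^\infty e^{-tL}1_y(x)\,dt$.

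\textbf{Step 1: the integral term.} Since $c\ge 0$, the map $t\mapsto \int_0^t (e^{-sL}c)(x)\,ds$ is monotonically increasing, so its limit as $t\to\infty$ exists in $[0,\infty]$. Positivity of the semigroup gives $(e^{-sL}c)(x)=\sum_y e^{-sL}1_y(x)c(y)$, with all terms non-negative; for $c\notin\ell^2$ this is the monotone extension to positive functions, e.g.\ via $c1_{K_n}\uparrow c$. Tonelli then yields
\begin{align*}
\lim_{t\to\infty}\int_0^t (e^{-sL}c)(x)\,ds=\sum_{y}\int_0^\infty e^{-sL}1_y(x)\,ds\, c(y)=Gc(x),
\end{align*}
and this is finite, at most $1$, by Lemma~\ref{cor:Gc_dichotomy}.

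\textbf{Step 2: the heat term.} We show that $t\mapsto e^{-tL}1(x)$ is monotonically decreasing. By the semigroup property and positivity, $e^{-(t+s)L}1=e^{-tL}(e^{-sL}1)\le e^{-tL}1$, because $e^{-sL}1\le 1$ by Markovianity. Being decreasing and bounded below by $0$, it has a limit. Alternatively, the Feynman--Kac formula makes the monotonicity immediate: $1_{\{\zeta>t\}}e^{-\int_0^t c}$ decreases in $t$.

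\textbf{Step 3: combining.} We have $M_t(x)=e^{-tL}1(x)+\int_0^t(e^{-sL}c)(x)\,ds$. Both summands have limits, the second of which is finite, so $\lim_t M_t(x)$ exists and equals $\lim_t e^{-tL}1(x)+Gc(x)$. Rearranging gives the claimed formula.

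The main technical point is justifying $e^{-sL}c=\sum_y e^{-sL}1_y\,c(y)$ when $c$ is unbounded or not in $\ell^2$. I would handle this with the monotone convergence property of the extended semigroup on positive functions, as in \cite[Chapter~7]{KLW}.
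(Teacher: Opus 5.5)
Your proof is correct and follows the paper's argument. The integral term is identified with $Gc(x)$ via the Laplace transform representation of $G$ and Fubini/Tonelli, exactly as in the paper. The only difference is which limit you get from monotonicity: you show directly (semigroup plus Markov property) that $t\mapsto e^{-tL}1(x)$ decreases, then use $Gc\le 1$ from Lemma~\ref{cor:Gc_dichotomy}, whereas the paper cites \cite[Theorem~7.14]{KLW} for monotonicity and boundedness of $t\mapsto M_t(x)$ and deduces the existence of $\lim_{t\to\infty} e^{-tL}1(x)$. Your version is slightly more self-contained.
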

\begin{proof}
We observe by the semigroup representation of the Green function and Fubini's theorem, we have 
	\begin{multline*}
		 ( Gc)(x) =\sum_{y\in X} G(x,y) c(y) 
		 =\sum_{y\in X} \int_0^\infty e^{-sL}1_y(x) c(y) ds\\
		= \int_0^\infty \sum_{y\in X} e^{-sL}1_y(x) c(y) ds
		=\int_0^\infty \big( e^{-sL}  {c} \big)(x)\, ds\\
		= \lim_{t \to \infty} \int_0^t \big( e^{-sL}  {c} \big)(x)\, ds
		=\lim_{t \to \infty}\Big( M_t(x)  -  e^{-tL}1(x)\Big)
	\end{multline*}
	by definition of $ M_t $. Furthermore,  the function $ t\mapsto M_t(x) $ is decreasing,  bounded by $1$, and non-negative for all $x \in X$, \cite[Theorem~7.14]{KLW}. Therefore, the limit $ \lim_{t \to \infty} M_t(x) $ exists. As the limit of the difference exists, the limit $ \lim_{t \to \infty} e^{-tL}1(x) $ must exist as well. This allows us to conclude the above formula for $ Gc(x) $.
\end{proof}

We prove a second lemma which yields the interpretation that $ e^{-tL}1(x) $ represents the mass of heat which is still within the graph at time $ t $ with an initial distribution of $ 1_x $ at time $ t=0 $.

\begin{lemma}
\label{lem:heat2} Let $(b,c)$ be a connected  graph over $X$ and let $ (\mathbb{X}_t) $ be the Markov process associated with $b$ and lifetime $\zeta$.  Then, for all $x \in X$ and $t >0$,
\begin{align*}
e^{-tL}1(x) = \sum_{X}e^{-tL}1_x  = \mathbb{E}_x\Big(1_{\{\zeta > t\}}e^{-\int_{0}^{t}c(\mathbb{X}_s)}ds  \Big).
\end{align*}
\end{lemma}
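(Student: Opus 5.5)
The two equalities will be proved separately: the first by symmetry of the semigroup kernel, the second by the Feynman--Kac formula.

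\emph{First equality.} Write the extended semigroup as a kernel, $e^{-tL}f(x)=\sum_{y\in X} p_t(x,y) f(y)$ for positive $f$. Here $p_t(x,y)=e^{-tL}1_y(x)\ge 0$. The extension of $e^{-tL}$ to $\ell^\infty(X)$ is obtained from the $\ell^2$-semigroup by monotone approximation: take $1_{K_n}\uparrow 1$ along an exhaustion $(K_n)$ and use positivity of the semigroup. Monotone convergence then gives
\begin{align*}
e^{-tL}1(x)=\lim_{n\to\infty} e^{-tL}1_{K_n}(x)=\sum_{y\in X} p_t(x,y).
\end{align*}
Since $L$ is self-adjoint on $\ell^2(X)$ (counting measure), the kernel is symmetric:
\begin{align*}
p_t(x,y)=\langle e^{-tL}1_y,1_x\rangle=\langle 1_y,e^{-tL}1_x\rangle=p_t(y,x).
\end{align*}
Hence
\begin{align*}
\sum_y p_t(x,y)=\sum_y e^{-tL}1_x(y)=\sum_X e^{-tL}1_x,
\end{align*}
a sum of non-negative terms. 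It is finite because it is bounded by $1$, using $0\le e^{-tL}1\le 1$ from the Markov property.

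\emph{Second equality.} Apply the Feynman--Kac formula quoted above with $f=1$:
\begin{align*}
e^{-tL}1(x)=\mathbb{E}_x\Big(1_{\{\zeta>t\}}e^{-\int_0^t c(\mathbb{X}_s)\,ds}\Big).
\end{align*}
This is exactly the right-hand side of the lemma, where the $ds$ belongs inside the exponent. The only point needing care is whether the quoted formula applies to the bounded function $1$ or only to $f\in\ell^2(X)$ or $C_c(X)$. If it is only available for such $f$, apply it to $f=1_{K_n}$ and let $n\to\infty$. On the left, monotone convergence gives $e^{-tL}1(x)$, as in the first step. On the right, monotone convergence in the expectation applies because $1_{K_n}(\mathbb{X}_t)\uparrow 1$ on $\{\zeta>t\}$.

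\emph{Expected difficulty.} Nothing here is a real obstacle. The one step that needs attention is making sure the $\ell^\infty$-extension of $e^{-tL}$ coincides with the monotone limit of its action on $1_{K_n}$. This holds because of the standard construction of the extension for positive functions via positivity and monotone limits (see \cite[Chapter~1]{KLW}).
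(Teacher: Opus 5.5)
Your proposal is correct and follows essentially the same route as the paper. The paper gets the first equality from the $\ell^\infty$--$\ell^1$ duality $\langle e^{-tL}1,1_x\rangle=\langle 1,e^{-tL}1_x\rangle$, which you spell out concretely via symmetry of the heat kernel and monotone approximation. The paper gets the second equality directly from the Feynman--Kac formula with $f=1$, citing \cite[Theorem~7.32]{KLW}, exactly as you do.
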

\begin{proof}
	The first formula is an easy consequence of the duality of the semigroup on $\ell^1(X)$ and $\ell^\infty(X)$, see \cite[Chapter~2.1]{KLW}, i.e., since $ 1\in \ell^{\infty}(X) $ and $ 1_x\in \ell^{1}(X) $, we have
	\begin{align*}
e^{-tL}1(x) &=\langle e^{-tL}1, 1_x\rangle_{\ell^{\infty},\ell^{1}} = \langle 1, e^{-tL}1_x\rangle_{\ell^{\infty},\ell^{1}} = \sum_{X}e^{-tL}1_x.
	\end{align*}
	The second formula is a direct consequence of the Feynman-Kac formula and was shown in  \cite[Theorem~7.32]{ KLW}.
\end{proof}

\begin{proof}[Proof of Theorem~\ref{thm:Heat}]
The equivalence for ``some'' and ``all'' $x \in X$ in (ii.a) was already shown in Lemma~\ref{cor:Gc_dichotomy} and will imply the corresponding  for ``some'' and ``all'' equivalences in (iii.a) and  (iii.b) as well. Hence, it suffices to show the equivalence for a fixed $x \in X$.  
\medskip

(ii.a) $ \Leftrightarrow $ (iii.a): We observe that  $ 0\le e^{-tL}1\leq M_t\leq 1 $.
Hence,  Lemma~\ref{lem:heat} gives the desired equivalence and the equality in (iii.a) was shown in Lemma~\ref{lem:heat2}.\medskip


(iii.b) $ \Leftrightarrow $ (iii.a): 
By \cite[Theorem~7.33]{KLW}, we have that 
$$ \mathbb{P}_x(\zeta=\infty)=1 \quad \mbox{or} \quad 
\mathbb{P}_x\Big(\int_{0}^{\zeta}{c(\mathbb{X}_t)}dt =\infty \mid \zeta < \infty\Big)=1$$
for some (all) $x \in X$ 
is equivalent to  stochastic completeness at infinity.
Furthermore, by Lemma~\ref{lem:heat2} and Lebesgue's dominated convergence theorem, we have for all $x \in X$ that
\begin{align*}\lim_{t \to \infty} e^{-tL}1(x) &= \lim_{t \to \infty} \mathbb{E}_x\Big(1_{\{\zeta > t\}}e^{-\int_{0}^{t}c(\mathbb{X}_s)}ds  \Big) = \mathbb{E}_x\Big(1_{\{\zeta = \infty\}}e^{-\int_{0}^{\infty}c(\mathbb{X}_s)}ds  \Big).
\end{align*}
This readily gives that 
(iii.b) implies (iii.a). For the converse direction, note that the condition on the expectation to be $0$ yields
\[
\mathbb{P}_x\Big( \int_0^{\zeta} c(\mathbb{X}_t)  dt = \infty  \mid  \zeta = \infty \Big) = 1
\]
or $\mathbb{P}_x(\zeta = \infty) = 0$ (in which case there is nothing else to show).
This together with the theorem on total probability,
\begin{align*}
	\mathbb{P}_x \Big( \int_0^{\zeta} c(\mathbb{X}_t) dt = \infty \Big) &= \mathbb{P}_x (\zeta = \infty) \mathbb{P}_x\Big( \int_0^{\zeta} c(\mathbb{X}_t)  dt = \infty \mid  \zeta = \infty \Big) \\
	& \quad  + \, \mathbb{P}_x (\zeta < \infty) \mathbb{P}_x\Big( \int_0^{\zeta} c(\mathbb{X}_t)  dt = \infty \mid  \zeta < \infty \Big)
\end{align*}
yields the claim in both cases $\mathbb{P}_x(\zeta=\infty) =1$ or $\mathbb{P}_x(\int_0^{\zeta} c(\mathbb{X}_t)\, dt= \infty \mid \zeta < \infty) = 1$. 
\end{proof}

\begin{remark} Looking at Lemma~\ref{lem:heat} from the perspective of the Riesz decomposition of the constant function $1=u_p+u_h$ gives that for graphs which are stochastic complete at infinity, i.e.,\@ $M_t = 1$ for all $t >0$ that
\begin{align*} u_h = \lim_{t \to \infty} e^{-tL}1.\end{align*}
\end{remark}

We further draw a corollary from the above theorem which shows that increasing $c$ preserves the Liouville property. 
\begin{corollary}\label{cor:Gc_increase}
	Let $(b,c)$ be a connected transient graph over $X$. If $G^{b,c}c=1$, then $G^{b,c+c'}(c+c')=1$ for all $c' \geq 0$.
\end{corollary}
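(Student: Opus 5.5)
The corollary follows most directly from Theorem~\ref{thm:Liouville}, using the characterization (i.b) and (i.c) by subharmonic functions. Alternatively one can use the probabilistic criterion (iii.b) of Theorem~\ref{thm:Heat}. I will take the potential-theoretic route and keep the probabilistic one as a check.

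Assume $G^{b,c}c=1$ and fix $c'\ge 0$. The graph $(b,c+c')$ is still connected and transient, since $c+c'\neq 0$ whenever $c\neq 0$. If $c=0$, then $G^{b,c}c=0\neq 1$, so the hypothesis forces $c\ne0$. By Theorem~\ref{thm:Liouville} applied to $(b,c+c')$, it suffices to show (i.c): every bounded positive function $u$ that is subharmonic for $(b,c+c')$ vanishes.

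Let $u\ge0$ be bounded with $\L_{b,c+c'}u\le 0$. Since $u\ge 0$ and $c'\ge 0$,
\[
\L_{b,c}u=\L_{b,c+c'}u-c'u\le 0,
\]
so $u$ is a bounded positive subharmonic function for $(b,c)$. Boundedness also guarantees $u\in\F$, so both formal Laplacians are defined. The hypothesis $G^{b,c}c=1$ is (ii.a) of Theorem~\ref{thm:Liouville} for $(b,c)$. That theorem then gives (i.c) for $(b,c)$, hence $u=0$. Therefore (i.c), and with it (ii.a), holds for $(b,c+c')$, which means $G^{b,c+c'}(c+c')=1$.

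As a cross-check via Theorem~\ref{thm:Heat}: the process $(\mathbb{X}_t)$ and its lifetime $\zeta$ depend only on $b$. The event $\{\int_0^\zeta c(\mathbb{X}_t)\,dt=\infty\}$ is contained in $\{\int_0^\zeta (c+c')(\mathbb{X}_t)\,dt=\infty\}$, so (iii.b) transfers immediately from $(b,c)$ to $(b,c+c')$. There is no real obstacle in either argument. The only points needing care are that $(b,c+c')$ satisfies the standing hypotheses (connected and transient), and that positivity of $u$ is used exactly once, in the sign of $-c'u$.
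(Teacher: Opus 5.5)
Your proof is correct, but your main argument takes a different route from the paper. The paper proves the corollary in one line from criterion (iii.b) of Theorem~\ref{thm:Heat}: the event $\{\int_0^\zeta c(\mathbb{X}_t)\,dt=\infty\}$ is monotone in $c$. That is exactly the observation you keep only as a cross-check.

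Your primary argument stays inside Section~\ref{sec:Liouville}. For $u\ge 0$ you use the pointwise comparison $\L_{b,c}u=\L_{b,c+c'}u-c'u\le \L_{b,c+c'}u$, which is the same trick as in Lemma~\ref{lem:recurrence1}. This transfers criterion (i.c) of Theorem~\ref{thm:Liouville} from $(b,c)$ to $(b,c+c')$. Your side checks are all sound: $(b,c+c')$ is connected and transient, the hypothesis forces $c\neq 0$, and bounded functions lie in $\F$.

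Your route needs only the Riesz decomposition and the Harnack inequality. It avoids the Markov process, the Feynman--Kac formula, and the results on stochastic completeness at infinity on which the equivalence (ii.a)$\Leftrightarrow$(iii.b) rests.

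The paper's route gets more from the same observation. Since $\int_0^\zeta \lambda c(\mathbb{X}_t)\,dt=\infty$ if and only if $\int_0^\zeta c(\mathbb{X}_t)\,dt=\infty$, it also gives invariance under $c\mapsto\lambda c$ in both directions, as the remark after Lemma~\ref{lemma:multiplication by constants} notes. Your comparison argument only covers $\lambda\ge 1$, writing $\lambda c=c+(\lambda-1)c$. For $\lambda<1$ you would need an extra tool such as the resolvent identity of Lemma~\ref{lemma:resolvent}.
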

\begin{proof}This follows directly from (iii.b) of the above theorem.
\end{proof}

We finish this section with a consequence of the above theorem which shows that the condition $G_o \in \ell^1(X)$ implies the Liouville property.

\begin{proposition}
	Let $(b,c)$ be a connected transient graph over $X$ which is stochastically complete at infinity	 and $o \in X$ such that $G_o \in \ell^1(X)$. Then, every bounded harmonic function is zero.
\end{proposition}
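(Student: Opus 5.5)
By Theorem~\ref{thm:Heat}, the graph is assumed stochastically complete at infinity, so it suffices to prove
\[
\lim_{t\to\infty} e^{-tL}1(o) = 0 .
\]
Then (iii.a) holds, hence (ii.a) holds, and Theorem~\ref{thm:Liouville} yields that every bounded harmonic function is zero. By Lemma~\ref{lem:heat2} we have $e^{-tL}1(o) = \sum_X e^{-tL}1_o$, and by Lemma~\ref{lem:heat} the limit $\lim_{t\to\infty} e^{-tL}1(o)$ exists. So the whole task is to show that the $\ell^1$-mass of $e^{-tL}1_o$ tends to zero, using only that $G_o \in \ell^1(X)$.

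The key identity is obtained by integrating in time. Using the semigroup representation of the Green function, the symmetry $G(x,y)=G(y,x)$, positivity of the semigroup and Tonelli, I will write
\[
\sum_{x\in X} G_o(x) = \sum_{x\in X}\int_0^\infty e^{-tL}1_o(x)\,dt = \int_0^\infty \sum_{X} e^{-tL}1_o \,dt = \int_0^\infty e^{-tL}1(o)\,dt .
\]
Since $G_o\in\ell^1(X)$, the left side is finite. Hence the nonnegative function $t\mapsto e^{-tL}1(o)$ is integrable on $[0,\infty)$.

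An integrable nonnegative function whose limit at infinity exists must have limit $0$. This gives $e^{-tL}1(o)\to 0$ at the single vertex $o$. Alternatively, $t\mapsto e^{-tL}1(o)$ is monotonically decreasing, since $e^{-tL}1\le 1$ and the semigroup is positivity preserving, so $e^{-(t+s)L}1 = e^{-tL}e^{-sL}1\le e^{-tL}1$; this gives the same conclusion without citing Lemma~\ref{lem:heat}.

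The condition in (iii.a) is only needed for some $x$, so the single vertex $o$ suffices. The theorem's ``some (all)'' clause then gives (ii.a), and the conclusion follows.

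The only point needing care is the interchange of sum and integral together with the identification $G_o(x)=\int_0^\infty e^{-tL}1_o(x)\,dt$. This uses the representation $G(x,y)=\int_0^\infty e^{-tL}1_y(x)\,dt$ and the symmetry of $e^{-tL}$ on $\ell^2$, i.e.\ $e^{-tL}1_y(x)=e^{-tL}1_x(y)$. All terms are nonnegative, so Tonelli applies without any further obstacle.
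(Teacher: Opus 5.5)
Your proof is correct and matches the paper's argument: you identify $\|G_o\|_1=\int_0^\infty e^{-tL}1(o)\,dt$ via the semigroup representation, Tonelli and Lemma~\ref{lem:heat2}, and combine integrability with the existence of the limit (from Lemma~\ref{lem:heat}, or from your monotonicity remark) and stochastic completeness at infinity to obtain (iii.a), hence $Gc=1$ and the Liouville property. The appeal to symmetry of $G$ is unnecessary, since $G_o(x)=G(x,o)=\int_0^\infty e^{-tL}1_o(x)\,dt$ holds directly by definition.
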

\begin{proof}
	 Assuming that $G_o \in \ell^1(X)$, we have
	 by the semigroup representation of the Green function, \cite[Theorem~6.29]{KLW}, that
	 $${\|G_o\|}_1=\sum_{X} G_o =\sum_{X} \int_0^\infty e^{-tL}1_o \, dt=\int_0^\infty e^{-tL}1(o) \, dt, $$ 
	 where the third equality is Fubini's theorem and   Lemma~\ref{lem:heat2}. As the limit $\lim_{t \to \infty} e^{-tL}1(o)$ exists by Lemma~\ref{lem:heat}, this equality implies that $e^{-tL}1(o)$ tends to zero as $t \to \infty$ whenever the $ \ell^{1} $-norm is finite. Hence, if $G_o \in \ell^1(X)$, then Theorem~\ref{thm:main} implies the statement.
\end{proof}

\section{The Green formula perspective}\label{sec:GreenFormula}

In this section we give the proof of the equivalence of (ii) and (iv) in Theorem~\ref{thm:main}. Specifically, we show that the Liouville theorem is equivalent to the validity of a Green formula for potentials.

We denote the space
\begin{align*}
\ell^{1}(X,c) = \{f\colon X \to \R \mid \sum_X c|f| < \infty\}.
\end{align*}

\begin{theorem}[Green formula perspective]
		\label{thm:GF}
Let $(b,c)$ be a connected transient graph over $X$. 
Then, the following assertions are equivalent.
\begin{itemize}
\item[(ii.a)] $Gc(x) = 1$ for some (all) $x \in X$, i.e., $1\in\P$.
\item[(iv.a)] For some  nontrivial superharmonic $u\in\P\cap \ell^1(X,c)$ (respectively, for all superharmonic $u\in\P$ or $ u\in \D_0 $), we have 
$$\sum_X  \mathcal{L} u = \sum_X cu,$$
where both sides sum over positive terms and may  be  infinite.
\item[(iv.b)] A positive superharmonic function $u \in \ell^1(X,c)$ is a potential if and only if $$\sum_X  \mathcal{L} u = \sum_X cu,$$
where both sides converge absolutely.
\item[(iv.c)]   For some (all)  $x\in X$, one has $$ 1= \sum_X cG_x.$$
\end{itemize}	
\end{theorem}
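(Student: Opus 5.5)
The key identity is symmetry of the Green function. For a positive potential $u=Gk$ with $k\ge 0$, Tonelli gives
\begin{align*}
\sum_X cu=\sum_{x}c(x)\sum_y G(x,y)k(y)=\sum_y k(y)\,(Gc)(y)=\sum_X (\mathcal{L}u)\,Gc ,
\end{align*}
using $G(x,y)=G(y,x)$ and $\mathcal{L}u=k$ (\cite[Proposition~4]{HKPII}). The special case $u=G_x$, $k=1_x$, reads
\begin{align*}
\sum_X cG_x = Gc(x).
\end{align*}
This already gives (ii.a)$\Leftrightarrow$(iv.c) for "some" and "all", via Lemma~\ref{cor:Gc_dichotomy}. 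More generally, the displayed identity shows that (ii.a) implies (iv.a) for every superharmonic $u\in\P$, since then $u=G\mathcal{L}u$ with $\mathcal{L}u\ge0$.

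For the converse in (iv.a), let $u\in\P\cap\ell^1(X,c)$ be nontrivial superharmonic with $\sum_X \mathcal{L}u=\sum_X cu$. The identity gives $\sum_X \mathcal{L}u\,(1-Gc)=0$, and this sum is finite. Since $u$ is nontrivial, $\mathcal{L}u\ge0$ is not identically zero. Hence $1-Gc$ vanishes at some point, and the dichotomy of Lemma~\ref{cor:Gc_dichotomy} (or the Harnack inequality) yields $Gc=1$.

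For $u\in\D_0$ superharmonic, I would show that such $u$ is a positive potential: $\mathcal{L}u\ge0$ lies in $\G$ and $u-G\mathcal{L}u$ is a harmonic function in $\D_0$. The route is the Riesz decomposition combined with the fact that a harmonic function in $\D_0$ vanishes, since $\mathcal{Q}(h)=0$ forces $h=0$ when $c\ne0$ and $h$ lies in the closure of $C_c$. I expect this step, which needs a suitable Green's formula on $\D_0$ or a citation to \cite{FischerKeller} or \cite{KLW}, to be the main technical point. Positivity of $u$ also needs an argument, e.g.\ a minimum principle for $\D_0$ functions.

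For (iv.b), assume (ii.a) and let $u\ge0$ be superharmonic in $\ell^1(X,c)$. By Riesz, $u=u_p+u_h$ with $u_h\ge0$ harmonic and $u_p=G\mathcal{L}u$. The identity gives $\sum cu_p=\sum\mathcal{L}u\cdot Gc=\sum\mathcal{L}u$, so
\begin{align*}
\sum_X cu=\sum_X\mathcal{L}u+\sum_X cu_h .
\end{align*}
Equality in (iv.b) therefore holds iff $\sum_X cu_h=0$. Under (ii.a) we have $Gc=1$ with $c\ne0$, so $c(x_0)>0$ for some $x_0$. By Harnack, $u_h$ is either identically $0$ or strictly positive, so $\sum_X cu_h=0$ iff $u_h=0$, i.e.\ iff $u$ is a potential; all sums are finite since $u\in\ell^1(X,c)$. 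Conversely, if (iv.b) holds, apply it to $u=G_x$, which is a positive superharmonic potential lying in $\ell^1(X,c)$ because $\sum_X cG_x=Gc(x)\le1$. This yields $\sum_X cG_x=\sum_X 1_x=1$, i.e.\ (iv.c).
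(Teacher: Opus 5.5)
Your proof is essentially the paper's. It uses the symmetric Green formula $\sum_X cu=\sum_X (Gc)\,\mathcal{L}u$ for superharmonic potentials, the identity $\sum_X cG_x=Gc(x)$ together with the dichotomy lemma, the Riesz-decomposition argument that $\sum_X cu_h=0$ forces $u_h=0$ when $c\neq0$ (the paper's Lemma~\ref{lem:u_is_potential}), and testing with $G_x\in\D_0\cap\P\cap\ell^1(X,c)$, which also gives the converse for the $\D_0$-version of (iv.a).

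The step you leave open, that superharmonic $u\in\D_0$ are potentials, is handled in the paper by citing \cite[Lemma~8]{HKPII}. If you want to follow your own route, note that $u-G\mathcal{L}u\in\D_0$ is not automatic, since potentials need not have finite energy. It does follow from the bound $\mathcal{Q}(G(1_{K_n}\mathcal{L}u))\le\mathcal{Q}(u)$ for an exhaustion $(K_n)$, together with weak compactness in $\D_0$. That bound comes from $\mathcal{Q}(G_y,v)=v(y)$ for $v\in\D_0$ and $G\mathcal{L}u\le u$.
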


We start by showing a Green formula for potentials, which will be the main ingredient for the proof of the above theorem. 
\begin{lemma}\label{lem:GreenFormula}
	If $u, v$ are superharmonic potentials, then 
	\begin{align*}
		\sum_X v \L u = \sum_X u \L v,
	\end{align*}
	where both sides may be infinite.  In particular, for every superharmonic potential $u$,
	\begin{align*}
		\sum_X (Gc) \L u = \sum_X c u.
	\end{align*}
\end{lemma}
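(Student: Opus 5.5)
Write the superharmonic potentials as $u = Gk$ and $v = Gh$ with $k = \L u \ge 0$ and $h = \L v \ge 0$. Such potentials lie in $\G$ and $G(x,y)\ge 0$, and superharmonicity gives $k,h\ge 0$. Hence $u = Gk \geq 0$ and $v = Gh \ge 0$, so every term appearing in the sums is non-negative.

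The core of the argument is a Tonelli computation using the symmetry $G(x,y)=G(y,x)$ of the Green function. Symmetry is inherited from the symmetric operator $L$ through the representation $G(x,y)=\int_0^\infty e^{-tL}1_y(x)\,dt$ and the self-adjointness of $e^{-tL}$ on $\ell^2(X)$, which gives $\langle e^{-tL}1_y,1_x\rangle=\langle 1_y,e^{-tL}1_x\rangle$. With this in hand,
\begin{align*}
\sum_X v\,\L u = \sum_{x\in X}\Big(\sum_{y\in X}G(x,y)h(y)\Big)k(x) = \sum_{y\in X}h(y)\sum_{x\in X}G(y,x)k(x) = \sum_X h\,u = \sum_X u\,\L v .
\end{align*}
Since all summands are non-negative, Tonelli's theorem justifies the interchange of summation, and the identity holds in $[0,\infty]$. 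This explains why both sides are allowed to be infinite.

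For the second claim, take $v = Gc$. Lemma~\ref{cor:Gc_dichotomy} gives $c\in\G$, so $Gc$ is a potential. It is also superharmonic, because $\L Gc = c \ge 0$ by \cite[Proposition~4]{HKPII}. Applying the first part with this $v$ yields
\[
\sum_X (Gc)\,\L u = \sum_X u\,\L(Gc) = \sum_X u\, c.
\]

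The only point needing care is the use of $\L u = k$ when $u = Gk$. This is precisely the cited fact that $\L Gk = k$ for $k\in\G$, which holds because $u\in\mathcal F$. The symmetry of $G$ should be routine but must be stated explicitly. Neither step is a genuine obstacle; the main thing is to make sure positivity holds so that Tonelli applies without any integrability assumptions.
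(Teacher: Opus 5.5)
Your proof is correct and is essentially the paper's argument: write $u=Gk$, $v=Gl$ with $k,l\ge 0$, interchange the non-negative double sum via Tonelli and the symmetry of $G$, and then specialize to $v=Gc$. Your extra justifications fill in details the paper leaves implicit. These are the symmetry of $G$ via self-adjointness of $e^{-tL}$, and the fact that $Gc$ is a superharmonic potential: $c\in\G$, $Gc\le 1$ so $Gc\in\F$, and $\L Gc=c\ge 0$.
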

\begin{proof}
	Let $u = Gk$, $v = Gl$, $k, l \in \G$, $k, l \geq 0$. Then by Fubini's theorem and the symmetry of $G$, 
	\begin{align*} 		\sum_X v \L u = \sum_{X} k G l =  \sum_{x ,y\in X} k(x) G(x,y) l(y)=  \sum_{ X} lGk = \sum_X u \L v. 	\end{align*}
	The ```in particular'' statement follows from setting $v = Gc$.
\end{proof}

Next, show that the condition in (iv.a) implies that every positive superharmonic function in $\ell^1(X,c)$ is a potential.
\begin{lemma}
	\label{lem:u_is_potential}
	If $c \neq 0$ and $u \in \ell^1(X,c)$ is a positive superharmonic function with 
	\begin{align*}
		\sum_X (Gc) \L u \geq \sum_X cu,
	\end{align*}
	then $u$ is a potential. 
\end{lemma}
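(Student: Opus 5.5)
The plan is to use the Riesz decomposition to split off the harmonic part of $u$, and then show that this part vanishes by comparing the two sums in the hypothesis. Since $u\ge 0$ is superharmonic, the Riesz decomposition gives $u = u_p + u_h$, where $u_p$ is a potential, $u_h$ is harmonic, and $0\le u_p,u_h\le u$. Because $\L u_h = 0$, we have $\L u_p = \L u \ge 0$. Hence $u_p = G\L u$ is a superharmonic potential. It therefore suffices to show $u_h = 0$.

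\emph{Rewriting the left-hand side.} Apply the ``in particular'' part of Lemma~\ref{lem:GreenFormula} to the superharmonic potential $u_p$:
\begin{align*}
\sum_X (Gc)\,\L u = \sum_X (Gc)\,\L u_p = \sum_X c\,u_p .
\end{align*}
Since $0\le u_p\le u$ and $u\in \ell^1(X,c)$, the right-hand side is finite. Consequently the left-hand side of the hypothesis is finite as well.

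\emph{Comparing with the hypothesis.} Combining this identity with the hypothesis and with $u = u_p + u_h$ gives
\begin{align*}
\sum_X c\,u_p \;\ge\; \sum_X c\,u \;=\; \sum_X c\,u_p + \sum_X c\,u_h .
\end{align*}
All three sums are finite and have non-negative terms, so we may subtract $\sum_X c\,u_p$. This yields $\sum_X c\,u_h \le 0$, and since $c\,u_h \ge 0$ pointwise, $c\,u_h = 0$ on $X$. As $c\neq 0$, there is a vertex $x_0$ with $c(x_0)>0$, and hence $u_h(x_0)=0$.

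\emph{Conclusion.} The function $u_h$ is positive and harmonic on the connected graph and vanishes at $x_0$. By the Harnack inequality \cite[Theorem~4.1]{KLW}, exactly as in the proof of Lemma~\ref{cor:Gc_dichotomy}, $u_h$ vanishes identically. Thus $u = u_p$ is a potential.

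The main point requiring care is the finiteness needed for the subtraction. It is supplied by $u\in\ell^1(X,c)$ together with $u_p\le u$, which is why the integrability assumption appears in the statement.
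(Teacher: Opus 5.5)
Your proposal is correct and follows essentially the same route as the paper: a Riesz decomposition $u=u_p+u_h$, the identity $\sum_X (Gc)\,\L u_p=\sum_X c\,u_p$ from Lemma~\ref{lem:GreenFormula}, finiteness from $u\in\ell^1(X,c)$, then $u_h$ vanishing at a point with $c>0$, and the Harnack inequality to finish. The paper compresses the comparison into the single chain $\sum_X c u_p\le\sum_X cu\le\sum_X (Gc)\L u=\sum_X cu_p$ and cites \cite[Lemma~4.5]{KLW} for the Harnack step; your explicit subtraction giving $\sum_X c\,u_h\le 0$ is equivalent.
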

\begin{proof}
	Let $u \in \ell^1(X,c)$ be a positive superharmonic function with $		\sum_X (Gc) \L u \geq \sum_X cu.$
	Then $u$ has a Riesz decomposition $u = u_p + u_h$ with $0\le u_p \leq u$. It follows by the ``in particular'' statement of the lemma above that
	\begin{align*}
		\sum_X cu_p \leq \sum_X cu \leq \sum_X (Gc) \L u = \sum_X (Gc) \L u_p = \sum_X c u_p.
	\end{align*}
	Therefore, $\sum_X c u_p = \sum_X c u < \infty$. Since  $c \neq 0$, we infer that the positive harmonic function $u_h= u-u_p $ vanishes at some point (i.e, at some $x\in X$ with $c(x) \neq 0$) and, therefore, vanishes everywhere by the Harnack inequality	 \cite[Lemma~4.5]{KLW}. Hence, $u = u_p$ is a potential.
\end{proof}

\begin{proof}[Proof of Theorem~\ref{thm:GF}]
The implication (ii.a) $ \Rightarrow $ (iv.a)   for all superharmonic $ u\in \P $ immediately follows from the above lemma.  
  \medskip


Assume (iv.a) holds for some non-trivial superharmonic $ u=Gk\in\P \cap \ell^1(X,c) $. By $\L u=k\ge 0$ and Lemma~\ref{lem:GreenFormula}, we have
\begin{align*}\sum_X k =\sum_X  \mathcal{L} u= \sum_X cu = \sum_X  (Gc)  \mathcal{L}u=
	   \sum_X  (Gc) k.
\end{align*}
Hence, $Gc=1$ for all $x\in X$ such that $k(x)\ne0$. Hence, $Gc=1$ for all $x\in X$ by Lemma~\ref{cor:Gc_dichotomy} which is (ii.a). \medskip

Furthermore,  all superharmonic $u \in \mathcal{D}_0$ are potentials, cf.\@ \cite[Lemma~8]{HKPII}. On the other hand, $G_x$, $x \in X$,  satisfies by Fubini's theorem $ \sum_X cG_x = (Gc)(x) \leq 1 $ and  consequently $ G_x\in \ell^1(X,c) $. Since  $G_x$ is also superharmonic and in $ \D_0$  by \cite[Theorem~6.26~and~6.29]{KLW}, there exist  non-trivial superharmonic function in $\mathcal{D}_0\cap \ell^1(X,c)$ which gives the equivalence in (iv.a) with respect to $\D_0$. \medskip

We next show that (iv.a) implies (iv.b). Let $u \in \ell^1(X,c)$ be positive and superharmonic. If $u$ is a potential then $\sum_X \mathcal{L} u = \sum_X cu$ holds by (iv.a). On the other hand, we have already shown that (iv.a) implies (ii.a), which gives $c\neq0$. So, if $\sum_X cu = \sum_X \mathcal{L} u$ then $u$ is a potential by Lemma~\ref{lem:u_is_potential}.  \medskip

The implication (iv.b) to (iv.a) is clear by Lemma~\ref{lem:GreenFormula}, as $G_x\in \P\cap \ell^1(X,c)$ for  $x \in X$. \medskip

The equivalence of (iv.c) and (ii.a) follows from Fubini's theorem.
\end{proof}

\section{The decomposition perspective}\label{sec:Decomposition}
In this section we decompose the killing term $c$ into a part which is supported on a non-thick set and a part for which the free Green function applied to it stays bounded. We show that the Liouville property is equivalent to the non-existence  of such a decomposition.

\begin{theorem}\label{thm:decomposition}
Let $(b,c)$ be a connected transient graph over $X$. 
Then, the following assertions are equivalent.
\begin{itemize} 
 \item[(ii)]  $G c = 1$.
 \item[(v)]   For any non-thick $A \subseteq X$ and some (all)  $x \in X$
  $$\sum_{y \in X\setminus A} G^b(x,y) c(y) = \infty. $$
 \item[(v.a)] There exists no decomposition $c = c_1 + c_2$ with $c_1, c_2 \geq 0$, such that $c_1$ is supported on some non-thick set  and $G^{b }c_2 $ is bounded.
  \item[(v.b)] There exists no decomposition $c = c_1 + c_2$ with $c_1, c_2 \geq 0$, such that $c_1$ is supported on some non-thick set  and $G^{b }c_2 $ is finite.

\end{itemize}
In $\mathrm{(v.a)}$ and $\mathrm{(v.b)}$, it suffices to exclude decompositions of the form $c_1 = 1_A c$ and $c_2 = 1_{X \setminus A} c$ for some non-thick $A \subseteq X$.
\end{theorem}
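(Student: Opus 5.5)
The plan is to prove the cycle (ii) $\Rightarrow$ (v) $\Rightarrow$ (v.b) $\Rightarrow$ (v.a) $\Rightarrow$ (ii). The middle implications are formal. If $c=c_1+c_2$ with $c_1$ supported on a non-thick $A$, then $1_{X\setminus A}c\le c_2$, so (v) forces $G^bc_2=\infty$ at every point. Moreover, $G^b$ of a positive function is finite at one point iff it is finite everywhere (Harnack), which settles the ``some/all'' issue in (v). Finally, (v.b) $\Rightarrow$ (v.a) is trivial. Both constructions below use only decompositions $c_1=1_Ac$, $c_2=1_{X\setminus A}c$, which gives the final sentence of the theorem. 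If $b$ is recurrent, every positive $b$-superharmonic function is constant, so every set is thick and the statements degenerate. I will therefore assume $b$ is transient, so that the Riesz decomposition and \cite[Theorem~3.4, 3.5]{FischerKeller} are available for $(b,0)$ as well.

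\emph{(ii) $\Rightarrow$ (v).} Suppose $Gc=1$, $A$ is non-thick and $G^b(1_{X\setminus A}c)(x)<\infty$; we derive a contradiction. Let $h$ be the reduced function (balayage) of $1$ on $A$ with respect to $b$, i.e.\ the infimum of all positive $b$-superharmonic $s$ with $s\ge1$ on $A$. Then $0\le h\le 1$, $h$ is $b$-superharmonic, $h=1$ on $A$, and non-thickness gives $h\not\equiv1$. Put $c_2=1_{X\setminus A}c$ and $v=h+G^bc_2$, which is finite everywhere and a $b$-potential plus $h$. Then
\[
\L_{b,c}v=\L_b h+c_2+cv\ge c_2+1_A c=c,
\]
using $v\ge h=1$ on $A$. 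Since $Gc$ is the minimal positive supersolution of $\L_{b,c}w\ge c$ (compare $G_{K_n}\pi_{K_n}c\le v$ on finite sets via the minimum principle and let $n\to\infty$ as in Lemma~\ref{lem:Green}), we get $1=Gc\le h+G^bc_2$. Hence $0\le 1-h\le G^bc_2$. Here $1-h$ is a positive $b$-subharmonic function dominated by a $b$-potential, whose greatest harmonic minorant is $0$. By \cite[Theorem~3.4]{FischerKeller} this forces $1-h\le0$, contradicting $h\not\equiv1$.

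\emph{(v.a) $\Rightarrow$ (ii).} Suppose $Gc\ne1$. By Lemma~\ref{cor:Gc_dichotomy}, $\inf_X Gc=0$. Set $A=\{Gc\ge 1/2\}$, $c_1=1_Ac$, $c_2=1_{X\setminus A}c$. A direct computation gives $\L_b Gc=\L Gc-cGc=c(1-Gc)\ge0$. Hence $s=2Gc$ is a positive $b$-superharmonic function with $s\ge1$ on $A$ but $\inf s=0$, so $A$ is non-thick. On $X\setminus A$ we have $c_2\le 2c(1-Gc)=2\L_bGc$, and on $A$ trivially $c_2=0\le 2\L_bGc$. By the Riesz decomposition for $(b,0)$, the potential part of $Gc$ is $G^b\L_bGc\le Gc$. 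Thus $G^bc_2\le 2G^b\L_bGc\le 2Gc\le2$ is bounded, and this decomposition violates (v.a).

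The main obstacle I expect is the comparison step $Gc\le v$ in (ii) $\Rightarrow$ (v). It requires a minimum principle for positive supersolutions of $\L_{b,c}w\ge c$ on an infinite graph, which I plan to obtain through the exhaustion $G_{K_n}$ (on each finite $K_n$, $v-G_{K_n}\pi_{K_n}c$ is $(b_{K_n},c_{K_n})$-superharmonic and hence positive). A secondary point is verifying that the reduced function $h$ is genuinely $b$-superharmonic and equals $1$ on $A$. Here I would use the standard fact that the infimum of superharmonic functions is superharmonic, together with the defining property of thickness.
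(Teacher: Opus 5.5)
Your proof is correct. For (v.a) $\Rightarrow$ (ii) it follows the paper. You use the paper's construction with $\alpha=1/2$. The only difference is that the paper gets $G^b(c(1-Gc))\le Gc$ from the ``moreover'' part of Lemma~\ref{lemma:resolvent} (finite approximation plus Fatou), whereas you read it off the Riesz decomposition of $Gc$ for $(b,0)$. You also check $\L_b Gc=c(1-Gc)\ge 0$, which the paper leaves implicit when it treats $Gc$ as $(b,0)$-superharmonic in the thickness argument.

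For the other direction your route is genuinely different. The paper proves (ii) $\Rightarrow$ (v.b) by contraposition, in four steps:
\begin{enumerate}
\item Lemma~\ref{cor:Gc_dichotomy} applied to $(b,\lambda c_1)$ gives $G^{b,\lambda c_1}(\lambda c_1)\le s$.
\item Resolvent monotonicity then gives $G^{b,\lambda c}(\lambda c)\le s+\lambda G^b c_2$.
\item A small $\lambda$ pushes the right-hand side below $1$ at some point.
\item Lemma~\ref{lemma:multiplication by constants} transfers this back to $\lambda=1$.
\end{enumerate}
The scaling is needed because $s+G^bc_2$ itself need not be $<1$ anywhere. You avoid it. Applying minimality of $Gc$ among positive supersolutions of $\L_{b,c}w\ge c$ to $w=h+G^bc_2$ gives $0\le 1-h\le G^bc_2$. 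You then use that a positive $b$-subharmonic function dominated by a $b$-potential vanishes \cite[Theorems~3.4, 3.5]{FischerKeller}. So the fact that a potential has greatest harmonic minorant $0$ replaces the smallness the paper obtains by scaling.

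What this buys you: you need neither the resolvent identity nor the scaling lemma, and you get (v) directly for the decompositions $1_Ac$, $1_{X\setminus A}c$. The price is that you use Riesz/minorant theory for $(b,0)$ itself, which is why you must first set aside recurrent $b$.

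Two minor remarks:
\begin{itemize}
\item The reduced function is unnecessary. Taking $s\wedge 1$ for any witness $s$ of non-thickness already gives everything you use.
\item In the recurrent case it is not true that every set is thick, since $\emptyset$ never is. There (v) reads $G^bc=\infty$. This still holds under the convention $G^b\equiv\infty$, because transience of $(b,c)$ forces $c\neq 0$.
\end{itemize}
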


We start by showing a resolvent formula for the Green function of a graph with killing term.

\begin{lemma}\label{lemma:resolvent}
Assume that $c,\tilde c \colon  X \to [0,\infty)$ with $ \tilde c \leq  c$. If $f$ is nonnegative, then
$$G^{b,\tilde c}f = G^{b,c}f + G_n^{b,\tilde c}((c-\tilde  c) G^{b, c} f).$$
In particular, $G^{b,\tilde c}f \geq G^{b, c}f$ and if $f \in \mathcal G^{b,\tilde c}$,   then
$$G^{b,c} f =  G^{b,\tilde c}(f - ( c -\tilde c)G^{b,  c} f). $$
Moreover, we have
$$G^{b,c} c \geq G^{b,0}(c (1 -G^{b,  c} c)).$$
%
\end{lemma}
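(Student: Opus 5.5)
We plan to derive the resolvent formula from the semigroup/exhaustion representation of Green functions on finite sets, where it is plain linear algebra, and then pass to the limit by monotone convergence. (We read $G_n^{b,\tilde c}$ in the statement as $G^{b,\tilde c}$.)

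\textbf{Finite sets.} Fix an exhaustion $(K_n)$ and write $G_{K}^{c}$, $G_K^{\tilde c}$ for the inverses of the finite-dimensional operators $L_K^{c}$, $L_K^{\tilde c}$ associated to $(b_K,c+d_K)$ and $(b_K,\tilde c+d_K)$. On $C(K)$ we have $L_K^{c}=L_K^{\tilde c}+(c-\tilde c)$ as multiplication operators. The second resolvent identity then gives
\[
G_K^{\tilde c}-G_K^{c}=G_K^{\tilde c}\,(c-\tilde c)\,G_K^{c}.
\]
All these matrices have nonnegative entries, since they are inverses of Dirichlet restrictions (e.g.\ by the Neumann series of the random walk). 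Apply the identity to $\pi_K f$ with $f\ge0$.

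\textbf{Passage to the limit.} By \cite[Theorem~6.26 and 6.29]{KLW}, the kernels $G_{K_n}(x,y)$ increase monotonically to $G(x,y)$, both for $c$ and for $\tilde c$. Hence $G_{K_n}^{c}\pi_{K_n}f\uparrow G^{b,c}f$. The term $G_{K_n}^{\tilde c}\big((c-\tilde c)G_{K_n}^{c}f\big)$ is a sum of products of increasing nonnegative kernels and nonnegative weights, so monotone convergence gives the limit $G^{b,\tilde c}\big((c-\tilde c)G^{b,c}f\big)$ in $[0,\infty]$. This yields the first formula, with values possibly infinite, and hence $G^{b,\tilde c}f\ge G^{b,c}f$.

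If $c=\tilde c=0$ and $b$ is recurrent, the exhaustion formula still holds with the limit being infinite. We only use it in the case where $G^{b,\tilde c}$ is meaningful, i.e.\ the transient case (with the convention $G^{b,0}$ in $[0,\infty]$). This point needs a brief remark but is not an obstacle.

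\textbf{Second formula.} Assume $f\in\mathcal G^{b,\tilde c}$. Then $G^{b,\tilde c}f<\infty$, so both terms on the right of the first formula are finite. Moreover, $(c-\tilde c)G^{b,c}f\in\mathcal G^{b,\tilde c}$. Subtracting and using linearity of $G^{b,\tilde c}$ on $\mathcal G^{b,\tilde c}$ gives
\[
G^{b,c}f=G^{b,\tilde c}\big(f-(c-\tilde c)G^{b,c}f\big).
\]

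\textbf{Final inequality.} Take $\tilde c=0$ and $f=c$. The first formula gives
\[
G^{b,0}c=G^{b,c}c+G^{b,0}\big(c\,G^{b,c}c\big).
\]
We cannot simply subtract here, since $G^{b,0}c$ may be infinite. Instead, work at level $K_n$, where everything is finite:
\[
G_{K_n}^{c}c=G_{K_n}^{0}\big(c-c\,G_{K_n}^{c}c\big)\ge G_{K_n}^{0}\big(c\,(1-G^{b,c}c)\big),
\]
using $G_{K_n}^{c}c\le G^{b,c}c\le1$ by Lemma~\ref{cor:Gc_dichotomy}. The right-hand integrand $c(1-G^{b,c}c)$ is nonnegative. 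Letting $n\to\infty$ with monotone convergence on the right and convergence of the left to $G^{b,c}c$ gives
\[
G^{b,c}c\ge G^{b,0}\big(c\,(1-G^{b,c}c)\big).
\]

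The main care point is this last step, namely avoiding $\infty-\infty$ by inequalities at the finite level. The rest is bookkeeping.
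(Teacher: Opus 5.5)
Your proposal is correct and follows essentially the same route as the paper: the second resolvent identity for the Dirichlet restrictions to an exhaustion $(K_n)$, monotone passage to the limit, rearrangement for the second identity, and a finite-level argument for the final inequality. The only real difference is in that last step: the paper applies Fatou's lemma to the nonnegative summands of $G^{b,0}_{K_n}\big(c(1-G^{b,c}_{K_n}c)\big)$, whereas you first bound from below by the fixed nonnegative function $c(1-G^{b,c}c)$ and then use monotone convergence; both work. (Your argument for the second identity assumes $f\ge 0$; a signed $f\in\mathcal G^{b,\tilde c}$ is handled by applying it to $f_\pm$, which is what the paper does via $|f|$.)
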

\begin{proof}Let $(K_n)$ be an exhaustion of $X$ and let $G_n^{b,c}$ and $G_n^{b,\tilde c}$ be the Green function corresponding to the graph $(b,c)$ and $(b,\tilde c)$  restricted to $K_n$ with Dirichlet boundary conditions, extended by $0$ to $X \setminus K_n$. Moreover, let $c_n=c1_{K_n}$ and $\tilde{c}_n=\tilde{c}1_{K_n}$. Then by the resolvent formula one has
\begin{align*}
	G_n^{b,\tilde c}-G_n^{b,c} = G_n^{b,\tilde c}(c_n-\tilde  c_n) G_n^{b, c}.
\end{align*}
By \cite[Proposition~1.20]{KLW} these restricted Green functions are monotone increasing in $n$. Moreover, \cite[Theorem~6.26]{KLW} shows $G_n^{b,c} \to G^{b,c}$ and $G_n^{b,\tilde c} \to G^{b,\tilde c}$ pointwise, as $n \to \infty$. Hence, applying the induced Green operators to $f_n=f1_{K_n}$ yields the first equality with the help of the monotone convergence theorem. This also implies $G^{b,\tilde c}f \geq G^{b, c}f$.

The second identity follows from rearranging the first noting that  all the involved infinite sums are absolutely convergent because $f \in  \mathcal G^{b,\tilde c}$ and the first equality holds for $|f|$.

The 'Moreover'-statement follows from the formula for the approximating Green functions with $\tilde c = 0$ and $f = c$ and Fatou's lemma. It can be applied, because all of the summands of $G^{b,0}_n(c_n(1-G^{b,c_n}c_n))$ are nonnegative.
\end{proof}

Next, we show that the Liouville property is invariant under multiplication of the killing term by a positive constant.  
\begin{lemma} \label{lemma:multiplication by constants}
 Let $(b,c)$ be a graph. Then  $Gc = 1$ if and only if 
 $G^{b,\lambda c} (\lambda c) = 1$ for some (all) $\lambda > 0$. 
\end{lemma}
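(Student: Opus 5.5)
Since $G^{b,\lambda c}(\lambda c)=1$ with $\lambda=1$ is just $Gc=1$, the ``some'' and ``all'' versions follow once we show: if $G^{b,c}c=1$, then $G^{b,\lambda c}(\lambda c)=1$ for every $\lambda>0$. Applying this to the killing term $\lambda c$ and the factor $1/\lambda$ gives the converse. So fix $\lambda>0$ and assume $Gc=1$.

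\emph{Case $\lambda\ge 1$.} Write $\lambda c=c+(\lambda-1)c$ with $(\lambda-1)c\ge0$. Corollary~\ref{cor:Gc_increase} then gives $G^{b,\lambda c}(\lambda c)=1$ directly.

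\emph{Case $\lambda<1$.} Here I would use the probabilistic characterization (iii.b) of Theorem~\ref{thm:Heat}. The event $\{\int_0^\zeta c(\mathbb{X}_t)\,dt=\infty\}$ coincides with $\{\int_0^\zeta \lambda c(\mathbb{X}_t)\,dt=\infty\}$, since multiplying by $\lambda>0$ does not change whether an integral of a nonnegative function diverges. The process $(\mathbb{X}_t)$ with lifetime $\zeta$ is the one associated with $b$ alone, so it does not depend on the killing term. Hence condition (iii.b) for $(b,c)$ and for $(b,\lambda c)$ is literally the same statement, and Theorem~\ref{thm:Heat} yields $G^{b,\lambda c}(\lambda c)=1$.

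This argument actually handles all $\lambda>0$ at once, so the case split is not needed. The only point to check is that Theorem~\ref{thm:Heat} applies to $(b,\lambda c)$, i.e.\ that this graph is connected and transient. Connectedness depends only on $b$. For transience: if $c\neq0$ then $\lambda c\neq0$, which forces transience. If $c=0$, both graphs coincide, and the hypothesis $Gc=1$ fails, since $G0=0\neq1$, so the statement holds trivially.

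A purely analytic alternative would go through the resolvent formula of Lemma~\ref{lemma:resolvent}, but comparing $G^{b,c}$ and $G^{b,\lambda c}$ for $\lambda<1$ is awkward, which is why I would rely on (iii.b).
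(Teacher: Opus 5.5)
Your proof is correct, but it is not the argument the paper writes out; the paper only points to your route in the remark that follows the lemma.

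The paper's proof is analytic and rests on the resolvent identity of Lemma~\ref{lemma:resolvent}. When the killing term is decreased from $\lambda c$ to $\mu c$ with $\mu<\lambda$, it substitutes $G^{b,\lambda c}(\lambda c)=1$ into $G^{b,\lambda c}(\lambda c)=G^{b,\mu c}\bigl(\lambda c-(\lambda-\mu)c\,G^{b,\lambda c}(\lambda c)\bigr)$ and reads off $G^{b,\mu c}(\mu c)=1$. When the killing term is increased, it uses the analogous identity with the roles of the two killing terms swapped, together with the a priori bound $G^{b,\mu c}(\mu c)\le 1$ from Lemma~\ref{cor:Gc_dichotomy}, to get $1\ge G^{b,\mu c}(\mu c)\ge 1$. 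So the direction you call awkward ($\lambda<1$, i.e.\ shrinking the killing term) is in fact the one-line case there.

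Your use of (iii.b) in Theorem~\ref{thm:Heat} is shorter and more conceptual. Since $(\mathbb{X}_t)$ and $\zeta$ depend only on $b$, the Liouville property sees $c$ only through the event $\{\int_0^\zeta c(\mathbb{X}_t)\,dt=\infty\}$. This gives scaling invariance for all $\lambda$ at once, so no case split is needed. It also yields Corollary~\ref{cor:Gc_increase} and invariance under replacing $c$ by any $c'$ with $C^{-1}c\le c'\le Cc$. The price is that you rely on the Feynman--Kac formula and the probabilistic characterization of stochastic completeness at infinity used in the proof of Theorem~\ref{thm:Heat}. The paper's argument instead stays within discrete potential theory: finite-volume Green functions and monotone limits.

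There is no circularity, since Theorem~\ref{thm:Heat} is proved without this lemma. Your treatment of the degenerate case $c=0$ is also fine, because both sides of the equivalence are then literally the same assertion.
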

\begin{proof}
It suffices to show that if $G^{b,\lambda c} (\lambda c) = 1$ for some  $\lambda > 0$, then $G^{b,\mu c} (\mu c) = 1$ for all $\mu > 0$. So, assume   $G^{b,\lambda c} (\lambda c) = 1$ for   $\lambda > 0$  and let $\mu > 0$.

If $\lambda > \mu$, then the previous lemma, Lemma~\ref{lemma:resolvent}, applied to  $\mu c \leq \lambda c$ and $G^{b,\lambda c}(\lambda c) = 1$ yield
\begin{align*}
 1 = G^{b,\lambda c} (\lambda c) =  G^{b,\mu c}(\lambda c - (\lambda c - \mu c)G^{b,\lambda c} (\lambda c))
 = G^{b,\mu c}(\mu c).
\end{align*}

If $\lambda < \mu$, then Lemma~\ref{cor:Gc_dichotomy} gives  $G^{b,\mu c} (\mu c) \leq 1$ and Lemma~\ref{lemma:resolvent},	 applied to  $\lambda c \leq \mu c$  yield 
\begin{multline*}
 1 \geq G^{b,\mu c} (\mu c)  =  G^{b,\lambda c}(\mu c - (\mu c - \lambda c)G^{b,\mu c} (\mu c)) \\
  = \frac{\mu}{\lambda} - (\mu - \lambda) G^{b,\lambda c} (c G^{b,\mu c} (\mu c))
  \ge \frac{\mu}{\lambda} - (\mu - \lambda) G^{b,\lambda c} c =1,
\end{multline*}
where we applied $G^{b,\lambda c}( cG^{b,\mu c} (\mu c)) \leq G^{b,\lambda c} c $ and 
$G^{b,\lambda c}( c) = 1/\lambda$ in the last steps. Hence, we obtain $G^{b,\mu c} (\mu c) = 1$.
\end{proof}

\begin{remark}
	Alternatively, the lemma also follows from Theorem 8 (iii.b), similar to Lemma~\ref{cor:Gc_increase}.
\end{remark}

\begin{proof}[Proof of Theorem~\ref{thm:decomposition}]
(v.a) $\Rightarrow$ (ii): We prove the statement by contraposition, so, assume $Gc\neq 1$.  Let $s = 1 - G  c$ and for $0 < \alpha < 1$ consider the set $A = \{x \in X \mid s(x) < \alpha\}$.

\emph{$A$ is not thick:} The nonnegative superharmonic function $Gc$ satisfies $Gc=1-s \geq 1 - \alpha$ on $A$. If $A$ were thick, we would obtain $Gc \geq 1 - \alpha$ on $X$. But  $G c \neq 1$ implies $\inf_{  X} Gc  = 0$ by Lemma~\ref{cor:Gc_dichotomy}, which contradicts $Gc \geq 1 - \alpha$.

\emph{$G^{b,0} c_2$ bounded:} Now, let $c_1 = 1_A c$ and $c_2 = 1_{X \setminus A} c$. Using Lemma~\ref{lemma:resolvent} and the definition of $A$, we obtain
\begin{align*}
G^{b,0} c_2 = G^{b,0}  (1_{X \setminus A} c) \leq \frac{1}{\alpha} G^{b,0} (c s) \leq  \frac{1}{\alpha} G^{b,c} c \leq \frac{1}{\alpha}.
\end{align*}

(ii) $\Rightarrow$ (v.b): By contraposition, we choose a decomposition $c = c_1 + c_2$ as in (v.b).

Since $A$ is non-thick, we find a nonnegative superharmonic function $s$ satisfying $s \geq 1$ on $A$ but with $\inf_{  X} s < 1$. Without loss of generality, we can assume $s \leq 1$ (else consider $s \wedge 1$) such that $s = 1$ on $A \supset {\rm supp}\,(c_1)$. Then
$$\mathcal L_{b,c_1}(1 - s) = c_1 - s c_1 - \mathcal{L}_{b,0}s = - \mathcal{L}_{b,0}s \leq 0. $$
Lemma~\ref{cor:Gc_dichotomy} (applied to the graph $(b,c_1)$ and $u = 1-s$) implies $G^{b,c_1} c_1 \leq 1 - (1-s) = s$.  Since $\inf_{  X} s  < 1$, we infer $\inf_{  X} G^{b,c_1} c_1  = 0$ with the help of Lemma~\ref{cor:Gc_dichotomy}.  The same argument holds for $\lambda c_1$ instead of $c_1$, showing $G^{b,\lambda c_1}(\lambda c_1) \leq s$ for any $\lambda > 0$.

With this at hand, Lemma~\ref{lemma:resolvent} implies that for any $\lambda > 0$ we have
$$G^{b,\lambda c}(\lambda c )  = G^{b,\lambda c}(\lambda c_1)  + G^{b,\lambda c}(\lambda c_2)  \leq G^{b,\lambda c_1}(\lambda c_1)  + G^{b,0}(\lambda c_2) \leq s +\lambda G^{b,0} c_2  .$$
Since $\inf_{  X} s < 1$ and $G^{b,0} c_2 < \infty$, we find $x \in X$ and $\lambda > 0$ with $s(x) + \lambda G^{b,0} c_2(x) < 1$. Hence, for this particular $\lambda$,  Lemma~\ref{cor:Gc_dichotomy} implies $G^{b,\lambda c}(\lambda c) < 1$. Now, Lemma~\ref{lemma:multiplication by constants} yields $G^{b,c}c < 1$.\medskip

(v.b) $\Rightarrow$ (v.a): This is clear.\medskip

The choice of $c_1$ and $c_2$ in (v.a) and (v.b)  as claimed follow from the construction above. As  $G^{b} (1_{X \setminus A} c)(x) = \infty$ for one $x \in X$ if and only if $G^{b,0} (1_{X \setminus A} c)(x) = \infty$ for all $x \in X$, which is implied by $G(x,y) \geq C G(z,y) $ for all $x,y,z \in X$ and some constant $C=C(x,z) > 0$, cf.~\cite[Proof of Theorem~6.26]{KLW}. This gives the equivalence of (v) and (v.b).
\end{proof}

One may wonder whether the decomposition $c = c_1 + c_2$ is indeed needed or if $G^bc=\infty$ already implies $Gc=1$. The following example shows that this is not the case. In particular, the condition $G^bc=\infty$ does not imply the Liouville property.

\begin{example}[$G^bc=\infty$ $\not\Rightarrow$ $Gc=1$]
	Let $X = \Z$, $x \sim y$ iff.\@ $|{x-y}| = 1$ and $b(k-1,k) = b(k,k-1) = 2^{k_+}$ where $k_+ = \max(0,k)$. Then $G^b_0(k) = 2^{-k_+}$ is the Green function at $0$ for the graph $b$. Let $c = 1_{-\N}$ and let $u = 1-G^b_0$. Then
	\begin{align*}
		\L^{b,c} u = \L^b (1-G^b_0) + 1_{-\N} (1-G^b_0) = - 1_{\{0\}}
	\end{align*}
	and $u$ is a non-trivial positive bounded subharmonic function on $(b,c)$. But we also have 
	\begin{align*}
		(G^b c)(0) = \sum_{x \in X} G^b(0,x) c(x) = \sum_{n=1}^\infty G^b_0(-n) = \infty. 
	\end{align*}
In view of Theorem~\ref{thm:decomposition}, note that here we have the decomposition $c=c_1+c_2$ with $c_1 = 1_{-\N}c = 1_{-\N}$ and $c_2=1_{\N_0}c = 0$, $G^{b,0}c_2 = 0$ and $-\N$ is a non-thick set since $s=G_0^b$ is $1$ on $-\N$ but strictly smaller than one on $\N$.
\end{example}

We next characterize thick sets. As thickness is defined in terms of $b$ only we consider the graph $(b,0)$ in the following. We will see that thickness of $X \setminus A$ is equivalent to the Liouville property for the graph $(b_A,d_A)$, where $d_A$ is the potential corresponding to Dirichlet boundary conditions on $A$ introduced in Section~\ref{sec:intro}. We denote the Green function of $(b_A,d_A)$ by $G^A$.

 Observe, that for $\emptyset \neq U \subseteq X$,  the graph $(b_U,d_U)$ is not necessarily  connected. However,  we can still apply our theorems by considering each connected component of $(b_U,d_U)$ and  since $(b,0)$ is connected,  $d_U$ does not vanish identically on any connected component. Hence, the restriction of $(b_U,d_U)$ to a connected component is a transient graph with nonvanishing potential.

Before we come to the characterization of thick sets, we need the following observation.

\begin{lemma}\label{lem:stochcompl}
 If $(b,0)$ is stochastically complete, then $(b_U,d_U)$ is stochastically complete at infinity for any   $\emptyset \neq U \subseteq X$.
\end{lemma}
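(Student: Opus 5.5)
We prove this through the characterization of stochastic completeness at infinity via bounded $\alpha$-harmonic functions, recalled in the remark after Theorem~\ref{thm:Liouville}. By \cite[Theorem~7.2]{KL12}, $(b_U,d_U)$ is stochastically complete at infinity if and only if, for some (all) $\alpha>0$, every bounded positive function $v$ on $U$ with $(\mathcal{L}_U+\alpha)v\le 0$ vanishes. The analogous statement for $(b,0)$ is ordinary stochastic completeness, which holds by assumption. The plan is therefore to take such a subsolution on $U$, extend it by zero to $X$, and check that the extension is an $\alpha$-subsolution for $(b,0)$.

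Concretely, let $v\ge 0$ be bounded on $U$ with $(\mathcal{L}_U+\alpha)v\le0$ on $U$, and put $w=i_Uv$. For $x\in U$,
\begin{align*}
\mathcal{L}_{b,0}w(x)=\sum_{y\in U}b(x,y)(v(x)-v(y))+\sum_{y\in X\setminus U}b(x,y)v(x)=\mathcal{L}_Uv(x),
\end{align*}
so $(\mathcal{L}_{b,0}+\alpha)w(x)\le 0$. For $x\in X\setminus U$ we have $w(x)=0$, hence
\begin{align*}
(\mathcal{L}_{b,0}+\alpha)w(x)=-\sum_{y\in U}b(x,y)v(y)\le0.
\end{align*}
Thus $w$ is a bounded positive $\alpha$-subharmonic function for $(b,0)$. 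Stochastic completeness of $(b,0)$ gives $w=0$ (via \cite[Theorem~7.2~(v.a)]{KL12}, or the corresponding result in \cite{KLW}), so $v=0$, which is stochastic completeness at infinity of $(b_U,d_U)$.

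Two technical points need care. First, $(b_U,d_U)$ may fail to be connected, so we apply the criterion on each connected component and note that the extension argument is local. Second, we must cite the criterion in the form for nonnegative (or bounded) subsolutions with arbitrary killing, where triviality of bounded $\alpha$-subsolutions is equivalent to stochastic completeness at infinity. This matching of the exact statement in \cite{KL12} is the main, though mild, obstacle.

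A fallback avoids the criterion. Compare semigroups by domain monotonicity, $e^{-tL_U}f\le (e^{-tL}i_Uf)|_U$, and use the Feynman--Kac formula in the form of \cite[Theorem~7.33]{KLW}. The process on $U$ is the process on $X$ killed at the exit time of $U$, and this exit happens through the killing $d_U$, so finite lifetime comes only from killing unless the original process explodes, which stochastic completeness rules out.
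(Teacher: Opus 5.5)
Your proof is correct and is essentially the paper's argument: extend a bounded nonnegative $\alpha$-subsolution of $(b_U,d_U)$ by zero, check that it is an $\alpha$-subsolution for $(b,0)$ both on $U$ and off $U$, and conclude from stochastic completeness of $(b,0)$ together with the subsolution characterization of stochastic completeness at infinity. The only difference is that the paper cites \cite[Theorems~7.2 and~7.18]{KLW} rather than \cite{KL12}, and your Feynman--Kac fallback is not needed.
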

\begin{proof}
 Let $\alpha > 0$ and let $u \colon U \to [0,\infty)$ be bounded with $(\mathcal L_{b_U,d_U} + \alpha)u \leq 0$ and let $v = u$ on $U$ and $v = 0$ on $X \setminus U$.  For $x \in U$, we obtain
 $$(\mathcal L_{b,0} + \alpha)v(x) = (\mathcal L_{b_U,d_U} + \alpha)u(x) \leq 0$$
 and for $x \in X \setminus U$ we have
 $$(\mathcal L_{b,0} + \alpha)v(x) = -\sum_{y \in X}b(x,y) v(y) \leq 0.$$
 Since $(b,0)$ is stochastically complete, we infer $v = 0$ by \cite[Theorem~7.2]{KLW} and hence $u = 0$, showing that $(b_U,c_U)$ is stochastically complete at infinity by \cite[Theorem~7.18]{KLW}.
\end{proof}

\begin{proposition}\label{proposition:characterization thickness}
 Let $A \subseteq X$. The following assertions are equivalent:
 \begin{itemize}
  \item[(i)] $X \setminus A$ is  thick.
 \item[(ii)] $G^A d_A =  1$.
 \item[(iii)] Any bounded function that is harmonic on $A$ and vanishes on $X \setminus A$ equals zero.
 \item[(iv)] $(b_A,d_A)$ is stochastically complete at infinity and
  $$ e^{-t L_{A}} 1 \to 0, \text{ as $t\to\infty$} \text{   pointwise on }A.$$
 \end{itemize}
If $(b,0)$ is stochastically complete, then the assertions above are equivalent to the following:
\begin{itemize}
  \item[(v)]  $$ e^{-t L_{A}} 1 \to 0, \text{ as $t\to\infty$} \text{   pointwise on }A.$$
 
 \end{itemize}

\end{proposition}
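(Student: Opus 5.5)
The plan is to reduce everything to Theorem~\ref{thm:main} (equivalently Theorems~\ref{thm:Liouville} and~\ref{thm:Heat}) applied to the graph $(b_A,d_A)$, working on each connected component of $(b_A,d_A)$ as explained above. On each component, $d_A$ does not vanish, so the component is transient. With this reduction:
\begin{itemize}
\item (ii)$\Leftrightarrow$(iii) is the equivalence (i)$\Leftrightarrow$(ii.a) of Theorem~\ref{thm:Liouville}, once we note that a bounded function on $A$ harmonic for $\mathcal L_{b_A,d_A}$ is the same as a bounded function on $X$ vanishing on $X\setminus A$ and $\mathcal L_{b,0}$-harmonic on $A$. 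This follows since $d_A(x)f(x)=\sum_{y\notin A}b(x,y)(f(x)-0)$.
\item (ii)$\Leftrightarrow$(iv) is Theorem~\ref{thm:Heat} (ii.a)$\Leftrightarrow$(iii.a).
\item If $(b,0)$ is stochastically complete, Lemma~\ref{lem:stochcompl} gives stochastic completeness at infinity of $(b_A,d_A)$ for free, so (iv)$\Leftrightarrow$(v).
\end{itemize}
It remains to connect thickness of $X\setminus A$ with (ii) (or (iii)).

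For (i)$\Rightarrow$(ii), I extend $u=G^Ad_A$ by $1$ on $X\setminus A$ and call this $s$. I claim $s$ is a positive $(b,0)$-superharmonic function with $s\le1$.
\begin{itemize}
\item On $A$: $\mathcal L_{b,0}s(x)=\sum_{y\in A}b(x,y)(u(x)-u(y))+\sum_{y\notin A}b(x,y)(u(x)-1)=\mathcal L_{b_A,d_A}u(x)-d_A(x)=d_A(x)-d_A(x)=0$.
\item On $X\setminus A$: $\mathcal L_{b,0}s(x)=\sum_y b(x,y)(1-s(y))\ge0$, using $u\le1$ from Lemma~\ref{cor:Gc_dichotomy}.
\end{itemize}
Since $s=1$ on the thick set $X\setminus A$, we get $s\ge1$ everywhere, so $G^Ad_A=1$.

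For (ii)$\Rightarrow$(i), I argue by contraposition. Suppose $X\setminus A$ is not thick, so there is a $(b,0)$-superharmonic $s\ge0$ with $s\ge1$ on $X\setminus A$ but $s(x_0)<1$ somewhere. Replacing $s$ by $s\wedge1$ (still superharmonic), we may assume $s=1$ on $X\setminus A$; then $x_0\in A$. Set $v=1-s$ restricted to $A$. This is a function $\le1$ on $A$, and $\mathcal L_{b_A,d_A}v(x)=-\mathcal L_{b,0}s(x)+\sum_{y\notin A}b(x,y)(1-s(y))\cdot(-1)\cdot0$. More precisely, since $1-s$ vanishes off $A$, $\mathcal L_{b_A,d_A}v=\mathcal L_{b,0}(1-s)=-\mathcal L_{b,0}s\le0$ on $A$. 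So $v$ is subharmonic for $(b_A,d_A)$, $v\le1$, and $v(x_0)>0$. By the last statement of Lemma~\ref{cor:Gc_dichotomy} (on the component containing $x_0$), $G^Ad_A\le1-v=s<1$ at $x_0$, contradicting (ii). Equivalently, the bounded positive subharmonic function $v_+$ is nonzero, which violates (i.c) of Theorem~\ref{thm:Liouville}.

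The main subtlety is bookkeeping with connected components. Condition (ii) must hold on every component. In the direction (ii)$\Rightarrow$(i), the point $x_0$ lies in some component, and the argument only needs that one component. In the direction (i)$\Rightarrow$(ii), $s$ is defined globally using $u$ on all components simultaneously, so the superharmonicity computation is unaffected. I also need to check that $s\wedge1$ stays superharmonic, which holds because minima of superharmonic functions are superharmonic, $1$ being harmonic for $(b,0)$, and that all sums lie in $\mathcal F$, which holds since the functions involved are bounded.
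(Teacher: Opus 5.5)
Your proposal is correct and follows essentially the same route as the paper. You obtain (ii)$\Leftrightarrow$(iii)$\Leftrightarrow$(iv) from Theorem~\ref{thm:main} on the components of $(b_A,d_A)$, and (iv)$\Leftrightarrow$(v) from Lemma~\ref{lem:stochcompl}. For thickness you use the function that equals $G^A d_A$ on $A$ and $1$ off $A$ in one direction, and $(1-s\wedge 1)|_A$ with the last assertion of Lemma~\ref{cor:Gc_dichotomy} in the other; the only cosmetic difference is that you run (i)$\Rightarrow$(ii) directly rather than by contraposition.
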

\begin{proof}
The equivalence of (ii),(iii) and (iv) is contained in   Theorem~\ref{thm:main} applied to the graph $(b_A,d_A)$.\medskip

(i) $\Rightarrow$ (ii): Assume $G^A d_A \neq    1$ implying $G^A d_A < 1$ by Lemma~\ref{cor:Gc_dichotomy}. We let
$$s_A \colon X \to \R,\quad s_A(x) = \begin{cases}
                                     G^A d_A(x) &\text{if } x\in A,\\
                                     1 &\text{if } x \in X \setminus A.
                                     \end{cases}
$$
Then $0 \leq s_A \leq 1$ by Lemma~\ref{cor:Gc_dichotomy} and, for $x \in A$,
we have
\begin{align*}
 \mathcal L_{b,0} s_A(x)  &= \sum_{y \in A} b(x,y)(G^A d_A(x) - G^Ad_A(y)) + d_A(x) (G^A d_A (x) - 1) \\
 &=\mathcal L_{b_A,d_A} (G^A d_A)(x) - d_A(x) = d_A(x) - d_A(x) = 0.
\end{align*}
Moreover, for $x \in X \setminus A$, we have
$$\mathcal L_{b,0} s_A(x) =   \sum_{y \in X} b(x,y) (1 - s_A(y)) \geq 0.$$
Hence, $s_A$ is positive, bounded and superharmonic and satisfies $s_A \geq 1$ on $X \setminus A$. But $s_A \geq 1$ on $X$ fails because $G^A d_A < 1$, showing that $X \setminus A$ is not thick.\medskip

(ii) $\Rightarrow$ (i): Assume that $X \setminus A$ is not thick. Choose a positive superharmonic function $s$ with $s \geq 1$ on $X \setminus A$ but $\inf_A s < 1$. Without loss of generality assume $s \leq 1$ (else consider $s \wedge 1$), which yields $s = 1$ on $X \setminus A$. Then $1 - s$ is subharmonic and   $u_A = (1 - s)|_A$  satisfies
$$\mathcal L_{b_A,d_A} u_A(x) = \mathcal L_{b,0}(1-s)(x) \leq 0$$
for all $x \in A$. Lemma~\ref{cor:Gc_dichotomy} yields $G^A d_A \leq 1 - u_A = s|_A$. Since $\inf_A s < 1$, we obtain $G^A d_A < 1$.\medskip

(iii) $\Leftrightarrow$ (iv): This follows from Lemma~\ref{lem:stochcompl}.
\end{proof}

\begin{remark}
 The characterization of thick sets (v) shows that for stochastically complete graphs $(b,0)$ the condition (v.b) is the same as in Batty's theorem on Schrödinger operators on $\R^d$, \cite{Bat92}.
\end{remark}

We finally put the pieces together and give the proof of the main result, Theorem~\ref{thm:main}. 

\begin{proof}[Proof  of  Theorem~\ref{thm:main}] 
	The equivalence of (i) and (ii) was shown in Theorem~\ref{thm:Liouville}. \\
	The equivalence of (ii) and (iii) was shown in Theorem~\ref{thm:Heat}. \\
	The equivalence of (ii) and (iv) was shown in Theorem~\ref{thm:GF}.  \\
	The equivalence of (ii) and (v) was shown in Theorem~\ref{thm:decomposition}.  
\end{proof}

\section{Recurrent and transient graphs without killing term}\label{sec:Rec}

In this section we study the validity of the Liouville theorem  by considering the graph $b$ of $(b,c)$ separately from the killing term $c$. We will see that the Liouville theorem holds if  either $b$ is recurrent or $c$ is summable.

We start with an example  which illustrates this phenomenon.
\begin{example} Consider the  Laplacian with standard weights on $\N$ with a Dirichlet boundary condition at $0$, i.e., $ b(x,y)=1 $ if  $ |x-y|=1 $ and  $b (x,y) = 0 $ otherwise and $c=1_{\{1\}}$. Clearly, the graph $b$  is recurrent. Furthermore, the Green function at $ 1 $ for the graph $(b,c)$ is given by $G_1(x) = 1$ for all $x \in \N$, \cite[Exercise~6.20]{KLW}. We conclude   $$Gc(1)=  c(1)=1.$$ 
It is also clear that the only harmonic functions with Dirichlet boundary condition at $0$ are  linear. Hence, the Liouville theorem holds for this graph.
\end{example}

\begin{theorem}
	\label{thm:recurrence} Let $(b,c)$ be connected graph over $X$. 
	\begin{itemize}
	\item[(a)] If $b$ is recurrent and $c \neq 0$, then every bounded harmonic function is zero.
	\item[(b)] If  $b$ is transient and $c\in\ell^1(X)$, then there exist non-zero bounded harmonic functions.
	\end{itemize}
\end{theorem}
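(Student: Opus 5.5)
For (a), I will use the thickness criterion from Theorem~\ref{thm:decomposition}, or more directly the Liouville property (i.c) of Theorem~\ref{thm:Liouville}. Since $c\neq 0$, the graph $(b,c)$ is transient and the theorems apply.

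Let $u$ be a bounded positive subharmonic function for $(b,c)$, normalized so that $u\le 1$. Then $\mathcal L_{b,0}u = \mathcal L_{b,c}u - cu \le -cu \le 0$, so $u$ is bounded and subharmonic for the recurrent graph $b$. Equivalently, $1-u\ge 0$ is superharmonic for $b$. On a recurrent graph every positive superharmonic function is constant (see \cite{KLW}), so $u$ is constant, say $u=\kappa\ge 0$. Plugging back in gives $\mathcal L_{b,c}u = c\kappa \le 0$. Since $c\neq 0$, this forces $\kappa=0$. Hence (i.c) holds, and Theorem~\ref{thm:Liouville} gives (i).

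An alternative route is via thickness. On a recurrent graph every positive superharmonic function is constant, so every nonempty set is thick. Then in (v.b) of Theorem~\ref{thm:decomposition} the only non-thick set is $\emptyset$, which forces $c_1=0$. Moreover $G^b c_2$ is infinite unless $c_2=0$, because $G^b\equiv\infty$ for a recurrent graph. I prefer the first argument since it avoids interpreting $G^b$ in the recurrent case.

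For (b), the plan is to show $Gc\neq 1$. By Lemma~\ref{lemma:resolvent} with $\tilde c=0$, we have $Gc = G^{b,c}c \le G^{b}c$. Since $b$ is transient, $G^b(x,y)=G^b(y,x)$ and $G^b(x,y)\le G^b(y,y)$. I would like $\sup_y G^b(x,y)<\infty$ for fixed $x$ in order to bound $G^bc(x)\le C\|c\|_1$, but this uniformity is not automatic.

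To get around this, I would instead use criterion (v) with $A=\emptyset$. This is allowed because $\emptyset$ is non-thick in a transient graph: taking $s = G^b_o/G^b(o,o)$, the superharmonic function $s$ satisfies $s\ge 1$ vacuously on $\emptyset$, while $\inf s<1$ by transience, since otherwise the Green function would be bounded below and the graph would be recurrent. Theorem~\ref{thm:decomposition}(v) then says that the Liouville property requires $G^b c(x)=\infty$. So it suffices to find one $x$ with $G^bc(x)<\infty$.

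To find such an $x$, I compute $\sum_x c(x)G^bc(x) = \sum_{x,y} c(x)G^b(x,y)c(y)$. This is not obviously finite either. A better approach is to use $G^b(x,y)\le G^b(y,y)\cdot$(hitting probability) together with a more careful choice of $A$. I would take $A$ to be a finite set $F$ containing most of the $\ell^1$ mass of $c$. A finite set is non-thick in a transient graph, since $s = \min_{z\in F} G^b_z/G^b(z,z)$ capped at $1$ works. Then I aim to show $G^b(1_{X\setminus F}c)(x)<\infty$.

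The main obstacle is controlling $\sup_y G^b(x,y)$ on the support of $c$. The bound $G^b(x,y)=G^b(y,x)\le G^b(x,x)$ holds by the maximum principle, since $G^b_x$ attains its maximum at $x$. With this bound, $G^b c(x)\le G^b(x,x)\|c\|_1<\infty$. So (v) fails with $A=\emptyset$, and Theorem~\ref{thm:main} yields a non-zero bounded harmonic function, namely $1-Gc$.
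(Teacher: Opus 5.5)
Your part (a) is the paper's argument (Lemma~\ref{lem:recurrence1}, then (i.c)$\Rightarrow$(i) of Theorem~\ref{thm:Liouville}). Your final argument for (b) is also correct. Your estimate $G^bc(x)\le G^b(x,x)\,\|c\|_1$ is the same bound the paper gets from the boundedness of $G^b_x$, which it cites from \cite[Theorem~B.1]{KLSS}.

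Where you differ is in how you obtain the key implication $Gc=1\Rightarrow G^bc=\infty$. The paper (Lemma~\ref{lem:recurrence2}) gets it in one line from the resolvent identity of Lemma~\ref{lemma:resolvent} with $\tilde c=0$: if $Gc=1$ and $G^bc<\infty$, then $1=Gc=G^b(c-c\,Gc)=0$. You instead read it off as the case $A=\emptyset$ of criterion (v) in Theorem~\ref{thm:decomposition}. This is legitimate, since $\emptyset$ is non-thick; the witness $s\equiv 0$ already shows this, so your choice $G^b_o/G^b(o,o)$ and its justification are unnecessary. Unwound, your route is a scaling argument: $G^{b,\lambda c}(\lambda c)\le \lambda G^bc(x)<1$ for small $\lambda$, and Lemma~\ref{lemma:multiplication by constants} then transfers the strict inequality back to $\lambda=1$. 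Your version shows that (b) is the trivial-decomposition instance of (v). The paper's identity is more direct and needs neither the decomposition theorem nor the scaling lemma.

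Two points on the write-up. First, $G^b(y,x)\le G^b(x,x)$ does not follow from a bare maximum principle on an infinite graph: a function $f$ with $\mathcal{L}_{b,0}f=1_x$ need not peak at $x$. Justify it via the finite-volume Green functions $G_{K_n}$, where the finite maximum principle applies, followed by monotone convergence. The hitting-probability representation also works, and so does the citation the paper uses. Second, remove the intermediate material. Your remark that the uniform bound in $y$ is \emph{not automatic} contradicts your later (correct) use of it. The finite-set detour is unnecessary and also wrong as stated, since $\min_{z\in F}G^b_z/G^b(z,z)$ is in general strictly below $1$ at points of $F$. One would need, e.g., the sum over $z\in F$ capped at $1$.
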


We prove the two implications separately in the following two lemmas. 
\eat{\begin{lemma}
\label{lem:recurrence} Let $(b,c)$ be connected graph over $X$ and $c\neq 0$.  
If the graph $b$ is recurrent, then $Gc=1$.
\end{lemma}
\begin{proof}
We first assume $0\neq c\in C_c(X)$, i.e., $c$ is finitely supported. If $ b $ is recurrent, then $ 1\in \D_0(\mathcal{Q}_b) $ and, therefore, $ 1\in \D_0(\mathcal{Q}_{b,c}) $. 
Thus, since $ c=\mathcal{L}1 $, we obtain  by the Green formula in $\mathcal{D}_0$, \cite[Lemma~6.8]{KLW}, applied twice 
\begin{align*}
\sum_{X} c G_{o} = \sum_X (\mathcal{L}1) G_{o} = \mathcal{Q}(1,G_{o}) = \sum_X  \mathcal{L}G_{o}=1.
\end{align*}
Thus, the statement follows from   Theorem~\ref{thm:GF}~(iv.c).
Now, for any $c'\ge c \neq 0$, let $G'$ be the Green function corresponding to the graph $(b,c')$. Then, then $G'c'=1$,  follows immediately from Theorem~\ref{thm:Heat}~(iii.c).
\end{proof}
}

\begin{lemma}\label{lem:recurrence1} Let $(b,c)$ be connected graph over $X$.
	If $b$ is recurrent and $c \neq 0$ then every bounded positive subharmonic function is zero.
\end{lemma}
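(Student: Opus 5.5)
The plan is to observe that a bounded positive subharmonic function for $(b,c)$ is automatically subharmonic for the graph $b$ without killing term, and then to use recurrence of $b$ to force it to be constant. Concretely, let $u\ge 0$ be bounded with $\mathcal{L}_{b,c}u\le 0$. Since $c\ge 0$ and $u\ge0$,
\begin{align*}
\mathcal{L}_{b,0}u = \mathcal{L}_{b,c}u - cu \le 0,
\end{align*}
so $u$ is a bounded subharmonic function on $(b,0)$. Then $s=\sup_X u - u$ is a positive bounded superharmonic function on $(b,0)$.

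Recurrence of $b$ is equivalent to the fact that every positive superharmonic function on $(b,0)$ is constant; this is standard, see \cite[Theorem~6.2]{KLW}. Applying it to $s$ shows that $u$ is constant, say $u\equiv a\ge 0$. We then return to the killing term: $\mathcal{L}_{b,c}u = ca\le 0$. Since $c\neq 0$, there is some $x$ with $c(x)>0$, which forces $a=0$. Hence $u=0$.

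The only step needing care is the characterization of recurrence through positive (or bounded) superharmonic functions being constant. If that citation were unavailable in this form, I would argue directly. First, $\mathcal{L}_{b,0}u\le 0$ together with boundedness gives $u\in\mathcal{F}$. Recurrence means $1\in\mathcal{D}_0(\mathcal{Q}_b)$ with $\mathcal{Q}_b(1)=0$, so there are $e_n\in C_c(X)$ with $0\le e_n\le 1$, $e_n\to1$ pointwise and $\mathcal{Q}_b(e_n)\to0$. I would test $\mathcal{L}_{b,0}u\le 0$ against $e_n^2 u$ and use the standard Caccioppoli-type estimate to get
\begin{align*}
\sum_{x,y}b(x,y)\,e_n(x)^2(u(x)-u(y))^2 \le C\,\|u\|_\infty^2\,\mathcal{Q}_b(e_n)\to 0,
\end{align*}
which yields that $u$ is constant by connectedness. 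This estimate is the main technical obstacle, but it is routine.
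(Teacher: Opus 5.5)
Your proof is correct and is essentially the paper's argument: use $cu\ge 0$ to pass to $(b,0)$-subharmonicity, invoke recurrence to conclude that $u$ is constant, and then use $c\neq 0$ to rule out a strictly positive constant. The paper applies recurrence directly to the bounded $(b,0)$-superharmonic function $-u$; your shift to $\sup_X u - u$ only makes the positivity required by the recurrence characterization explicit, and your fallback Caccioppoli argument is sound but not needed.
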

\begin{proof}
	Let $u$ be a bounded positive subharmonic function. Then 
	\begin{align*}
		\L_{b,0} (-u) = \L_{b,c}(-u) - c(-u) = - \L_{b,c} u + cu \geq 0, 
	\end{align*}
	and so $-u$ is bounded and $\L_{b,0}$-superharmonic. Since $b$ is recurrent this implies that $-u$ is constant. But a strictly positive constant is not $\L_{b,c}$-subharmonic due to $c\neq 0$ and we must have $u = 0$.
\end{proof}

\begin{lemma}\label{lem:recurrence2} Let $(b,c)$ be connected graph over $X$.  If $Gc=1$, then $G^bc=\infty$. In particular, if $b$ is transient and $c\in \ell^1(X)$, then there are non-zero bounded harmonic functions.
\end{lemma}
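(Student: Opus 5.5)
We prove the contrapositive: if $G^b c(x)<\infty$ for some $x\in X$, then $Gc\neq 1$. The ``in particular'' statement then follows at once. Indeed, if $b$ is transient and $c\in\ell^1(X)$, then $G^b_x(y)=G^b(y,x)$ is bounded in $y$; by the maximum principle one has $G^b(y,x)\le G^b(x,x)$. Hence $G^bc(x)\le G^b(x,x)\,\|c\|_1<\infty$, so $Gc\neq1$, and Theorem~\ref{thm:Liouville} (or Lemma~\ref{cor:Gc_dichotomy}) shows that $1-Gc$ is a non-zero bounded harmonic function. If $b$ is recurrent, then $G^b\equiv\infty$ and the first claim is trivially true. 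So we may assume throughout that $b$ is transient.

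The quickest route is the decomposition criterion, Theorem~\ref{thm:decomposition}. Take $A=\emptyset$, so $c_1=0$ and $c_2=c$. We need $\emptyset$ to be non-thick. A set $A$ is thick if every positive $(b,0)$-superharmonic $s$ with $s\ge1$ on $A$ satisfies $s\ge 1$ on $X$. For $A=\emptyset$ this condition on $A$ is vacuous, so thickness of $\emptyset$ would force every positive superharmonic function to be at least $1$. This fails: $s=0$ is positive and superharmonic. Hence $\emptyset$ is non-thick. Criterion (v) then says that $Gc=1$ implies $\sum_{y\in X}G^b(x,y)c(y)=\infty$ for all $x$, i.e.\ $G^bc=\infty$. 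This gives the first claim directly, with no further work.

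As a sanity check independent of thickness, I would also give the direct argument from the ``Moreover'' part of Lemma~\ref{lemma:resolvent}. Suppose $G^bc(x)<\infty$ and $Gc=1$, and put $\tilde c=\lambda c$ for $\lambda\in(0,1)$. Lemma~\ref{lemma:multiplication by constants} gives $G^{b,\lambda c}(\lambda c)=1$. The resolvent identity from Lemma~\ref{lemma:resolvent}, with $\tilde c=0$ and $f=\lambda c$, reads
\[
1 = G^{b,\lambda c}(\lambda c)= G^b(\lambda c)-G^b\bigl(\lambda c\, G^{b,\lambda c}(\lambda c)\bigr)=G^b(\lambda c)-G^b(\lambda c)=0,
\]
which is absurd. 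This identity is legitimate because $\lambda c\in\mathcal G^{b,0}$ by the finiteness assumption, so all sums converge absolutely. The contradiction shows that finiteness of $G^bc$ alone already excludes $Gc=1$.

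The only delicate point is checking that the rearranged resolvent formula in Lemma~\ref{lemma:resolvent} applies with $\tilde c=0$. This requires $c\in\mathcal G^{b,0}$, that is, finiteness of $G^bc$ at every point and not just at $x$. That follows from the Harnack-type comparison $G^b(x,y)\ge C\,G^b(z,y)$ cited at the end of the proof of Theorem~\ref{thm:decomposition}. Once this is in place, both arguments are immediate.
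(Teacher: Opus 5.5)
Your proposal is correct, but your main argument follows a different route from the paper's.

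\textbf{The paper's route.} The paper proves the first claim in one line. Assuming $G^bc<\infty$, the rearranged identity of Lemma~\ref{lemma:resolvent} with $\tilde c=0$ and $f=c$ gives $Gc=G^bc-G^b(c\,Gc)$, and inserting $Gc=1$ yields $1=0$. Your ``sanity check'' is exactly this argument. The passage to $\lambda c$ via Lemma~\ref{lemma:multiplication by constants} is unnecessary, since $\lambda=1$ works verbatim.

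\textbf{Your route.} You recognize the statement as the degenerate case $A=\emptyset$ of criterion (v) in Theorem~\ref{thm:decomposition}. This is legitimate: $\emptyset$ is non-thick, witnessed by $s=0$. It is also not circular, since Theorem~\ref{thm:decomposition} does not use Lemma~\ref{lem:recurrence2}. If you unwind the proof of (ii)$\Rightarrow$(v.b) with $c_1=0$ and $s=0$, the mechanism is monotonicity plus scaling:
\begin{itemize}
\item $G^{b,\lambda c}(\lambda c)\le \lambda\, G^bc$, which is $<1$ at some point for small $\lambda$;
\item Lemma~\ref{lemma:multiplication by constants} then transfers $G^{b,\lambda c}(\lambda c)\neq 1$ back to $Gc\neq 1$.
\end{itemize}
So the exact cancellation in the resolvent identity is replaced by an inequality.

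\textbf{Comparison.} The paper's route is shorter and self-contained. Yours uses heavier machinery, and its scaling lemma rests on the same resolvent identity anyway. In exchange, it shows that $G^bc=\infty$ is simply the trivial-set instance of the decomposition criterion. Together with the paper's example, this explains why the condition is necessary but not sufficient.

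You are also more careful than the paper on one point. The rearranged resolvent formula needs $G^bc<\infty$ at every point, and you correctly supply this from finiteness at one point via the Harnack-type comparison $G^b(x,y)\ge C\,G^b(z,y)$.

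For the ``in particular'' part, you bound $G^b(y,x)\le G^b(x,x)$ by the maximum principle instead of citing boundedness of $G^b_x$. This is equivalent to the paper's argument.
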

\begin{proof} We apply Lemma~\ref{lemma:resolvent} with $\tilde c = 0$  to obtain for $Gc=1$, $G^bc<\infty$ and $f=c$ that
\begin{align*}
	1=Gc=G^bc-G^bcGc=G^bc-G^bc=0,
\end{align*}
which is a contradiction. Hence, $Gc=1$ implies $G^bc=\infty$. 

For transient  $b$, the Green function $G^b$ exists and $G^b_x$ is bounded for all $x\in X$ by  \cite[Theorem~B.1]{KLSS}. Thus,  $c\in \ell^1(X)$, implies $G^bc<\infty$ by Hölder's inequality and, therefore, $Gc=1$ cannot hold. Hence, there are non-zero bounded harmonic functions by Theorem~\ref{thm:main}.
\end{proof}

\begin{proof}[Proof of Theorem~\ref{thm:recurrence}] The statement follows immediately from the two lemmas above and Theorem~\ref{thm:main}.	
\end{proof}

\begin{remark}
  \cite[Theorem 3.1]{GO96} states that an irreducible Dirichlet form is recurrent if and only if its semigroup is not $L^1$-asymptotically stable but any perturbation by a nonnegative potential has an $L^1$-asymptotically stable semigroup. Applied to our Dirichlet form $Q$ it can be phrased as follows: the graph $b$ is recurrent if and only if $e^{-tL_{b,0}} 1 \not \to 0$, as $t \to \infty$, and  $e^{-tL_{b,c}} 1  \to 0$, as $t \to \infty$, for any non-trivial $c $. 
  Since recurrent graphs $b$ are stochastically complete and, therefore, $(b,c)$ is stochastically complete at infinity Theorem~\ref{thm:main}~(iii) gives therefore Theorem~\ref{thm:recurrence}~(b) as a special case.
 \end{remark}

\section{The case of positive Schrödinger operators}\label{sec:measure}

In this section we  discuss the case of Schrödinger operators on general measure spaces. 
We briefly recall the setting and refer to \cite{KPP20} for a more detailed discussion.

A strictly positive function $ m\colon X \to (0,\infty) $ extends to a measure of full support on $X$ by setting $$  m(A) = \sum_{x\in A} m(x)  $$ for $ A \subseteq X $. We consider a connected graph $b$ over $(X,m)$. Its Laplacian is given by
\begin{align*}
	\L f(x) = \frac{1}{m(x)} \sum_{y\in X} b(x,y)(f(x)-f(y))
\end{align*}
Furthermore, we allow for a potential $c\colon X \to \R$ to take negative values and consider the formal Schrödinger operator acting as $\mathcal{H} f = \L f + \frac{c}{m} f$
for $ f\in \F $. We assume that the Schrödinger operator is positive, i.e., there is a positive nontrivial function $v\in \F $ such that
\begin{align*}\mathcal{H}v \ge 0.
\end{align*}
By the Harnack inequality we have $v>0$, \cite[Lemma~4.5]{KPP20}.
In this case, by the Allegretto-Piepenbrink theorem, \cite[Theorem~4.2]{KPP20}, and \cite[Theorem 3.8]{Sch20}  there is a  self-adjoint operator $H$ on $\ell^2(X,m)$ such that $C_c(X)$ is form dense and which acts as $\mathcal{H}$ on  $D(H)$, see also \cite[Corollary~3.8]{KPP20} for the statement under weak restriction on $c$. This gives rise to a positivity preserving semigroup $e^{-tH},$ $t\ge 0 $, see \cite[Theorem~3.1, Corollary~3.5]{KPP20}. Furthermore, if $\H$ is subcritical, see e.g.\@ \cite[Theorem~5.12]{KPP20} for a characterization, there is  a Green function $ G $, see \cite[Definition~5.13]{KPP20}, and we define the space of potentials $\P$ accordingly.

We state the  version of the Liouville theorem for Schrödinger operators and give a sketch of the proof. 

\begin{theorem}[Liouville theorem -- Schrödinger operators]
		\label{thm:main_measure}
Let $H$ be a subcritical positive Schrödinger operator over $(X,m)$ and $0< v\in \F$ be such that $\mathcal{H}v \ge 0$.
Then, the following assertions are equivalent.
\begin{itemize}
\item[(i)] Every  harmonic function $h$ bounded by $|h| \le v$ is zero.
\item[(ii)] $G\mathcal{H}v = v$, i.e., $v\in \P$.
\item[(iii)] One has for all $t >0$ that
\begin{align*}
	e^{-tH}v+\int_0^t e^{-sH} \mathcal{H}v ds = v
\end{align*}
and
$$e^{-tH}v \to 0,\mbox{ as } t \to \infty\;\mbox{ pointwise}.$$
\item[(iv)] For all superharmonic $u \in \mathcal{P} $  
$$\sum_X   (\mathcal{H}u)vm = \sum_X u(\mathcal{H}v) m.$$
\end{itemize}	
\end{theorem}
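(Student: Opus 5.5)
The plan is to reduce Theorem~\ref{thm:main_measure} to the case of graphs with nonnegative killing term, Theorem~\ref{thm:main}, by a ground state transform with respect to $v$. Define the graph $(b_v,c_v)$ over $X$ by
\begin{align*}
b_v(x,y)=b(x,y)v(x)v(y),\qquad c_v(x)=v(x)\,m(x)\,\mathcal{H}v(x)\ge 0 .
\end{align*}
The standard identity $v\,m\,\mathcal{H}(vf)=\L_{b_v,c_v}f$ holds for all $f$ with $vf\in\F$. It shows that $f\mapsto vf$ is a bijection between $(b_v,c_v)$-harmonic (sub-, superharmonic) functions and $\mathcal{H}$-harmonic (sub-, superharmonic) functions. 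It also maps bounded functions onto functions $h$ with $|h|\le Cv$. Connectedness of $b_v$ follows from that of $b$, and summability of $b_v(x,\cdot)$ from $v\in\F$.

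Next, I would transfer the Green function and the semigroup. Subcriticality of $H$ corresponds to transience of $(b_v,c_v)$, which can be checked via the characterization in \cite[Theorem~5.12]{KPP20}. The Green functions are related by $G^{b_v,c_v}(x,y)=G(x,y)/(v(x)v(y))$, up to the convention on $m$ in \cite[Definition~5.13]{KPP20}. Hence $G^{b_v,c_v}c_v=1$ is equivalent to $G(\mathcal{H}v\,m\cdot)=v$, i.e., to (ii), and potentials correspond to potentials via $u\mapsto vu$. Similarly, the unitary map $f\mapsto vf$ from $\ell^2(X,v^2m)$ to $\ell^2(X,m)$ intertwines the semigroups, giving $e^{-tH}(vf)=v\,e^{-tL_v}f$. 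Here $L_v$ is the operator of $(b_v,0)$ over $(X,v^2m)$, i.e., the measure version with killing $c_v$. Then (iii) becomes stochastic completeness at infinity of the transformed graph together with $e^{-tL_v}1\to0$.

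With this dictionary, (i)$\Leftrightarrow$(ii) follows from Theorem~\ref{thm:Liouville}. It applies directly since the harmonic/potential arguments there use no measure. (ii)$\Leftrightarrow$(iv) follows from Theorem~\ref{thm:GF}: for superharmonic $u=vw\in\P$ we have $\sum_X(\mathcal{H}u)vm=\sum_X\L_{b_v,c_v}w$ and $\sum_X u(\mathcal{H}v)m=\sum_X c_vw$. (ii)$\Leftrightarrow$(iii) follows from Theorem~\ref{thm:Heat} via Lemma~\ref{lem:heat}, after transforming $M_t$.

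The main obstacle is that the heat-semigroup part is formulated for the counting measure, while the transformed graph lives over $(X,v^2m)$. I would check that Lemma~\ref{lem:heat} only uses the Laplace representation $G=\int_0^\infty e^{-tL}dt$ and the monotonicity and boundedness of $M_t$ from \cite[Theorem~7.14]{KLW}. Both hold for graphs over general discrete measure spaces, so the proof carries over verbatim. The remaining technical point is confirming that the form associated with $H$ (with $C_c$ form dense) corresponds under the transform to the Dirichlet form with domain in $\D_0$. This is where I would invoke \cite[Theorem~3.8]{Sch20} and \cite{KPP20}.
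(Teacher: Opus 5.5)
Your proposal is correct and follows essentially the same route as the paper. It uses the ground state transform to $(b_v,c_v)$ with $c_v=vm\mathcal{H}v$, the Green function correspondence of Lemma~\ref{lem:ground_G} to obtain (i)$\Leftrightarrow$(ii) and (ii)$\Leftrightarrow$(iv) from the counting-measure results, and the semigroup intertwining of Lemma~\ref{lem:ground_sg} plus a rerun of Lemma~\ref{lem:heat} over the measure $v^2m$ for (iii), which is needed because stochastic completeness at infinity depends on the measure. One small slip: in the semigroup step, $L_v$ should be the operator of $(b_v,c_v)$ over $(X,v^2m)$, with killing density $c_v/(v^2m)=\mathcal{H}v/v$, not that of $(b_v,0)$.
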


\begin{remark}
	For certain graphs and potentials, the condition~(i) is known to hold. For instance, in \cite{BMP24}, the authors consider locally finite graphs over a discrete measure space $(G,\mu)$ and positive potentials $V$ that grow fast enough at infinity with respect to some proper, intrinsic metric of finite jump size. In \cite[Theorem~2.4]{BMP24}, it is shown that under a certain volume growth condition, all bounded solutions to $\mathcal{H}u=0$ must vanish for the killing term $c=V\mu$. The above Theorem~\ref{thm:main_measure} now applies with $v=1$, and we obtain validity of the assertions (ii)-(iv) as well for $\mathcal{H}v=\mathcal{H}1 = V$ and $\mu=m$. 
\end{remark}

The theorem can be also understood as a characterization of superharmonic potentials for positive Schrödinger operators. This perspective gives rise to the following immediate corollary.
\begin{corollary}	Let $H$ be a subcritical positive Schrödinger operator over $(X,m)$ and $v>0$ be a superharmonic potential. Then, every harmonic function $h$ bounded by $|h| \le v$ is zero.
\end{corollary}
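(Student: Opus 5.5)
The corollary follows almost immediately from Theorem~\ref{thm:main_measure}. The plan is to check that its hypotheses hold with the given $v$, and then read off assertion (i) from assertion (ii).

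First, $v$ is by assumption a superharmonic potential. So $v \in \P$, and $v = Gk$ for some $k \in \G$. Superharmonicity means $\mathcal{H}v \ge 0$, and together with $v>0$ this shows that $v$ is an admissible choice of the positive function in Theorem~\ref{thm:main_measure}. In particular $H$ is positive, and subcriticality is assumed. It remains to verify assertion (ii), namely $G\mathcal{H}v = v$. For this I will use the Schrödinger analogue of the fact recalled in the introduction that $\mathcal{L}Gk = k$, i.e.\ $\mathcal{H}Gk = k$ for potentials. With it, $G\mathcal{H}v = G\mathcal{H}Gk = Gk = v$.

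Then Theorem~\ref{thm:main_measure}, via (ii) $\Rightarrow$ (i), shows that every harmonic $h$ with $|h|\le v$ vanishes.

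The only point requiring care is the identity $\mathcal{H}Gk = k$ in the measure setting, together with the convention for how $m$ enters the definition of $G$. If needed, I would settle it by a ground state transform with respect to $v$, which reduces to a graph with nonnegative killing term where \cite[Proposition~4]{HKPII} applies. Alternatively, the uniqueness part of the Riesz decomposition gives it directly: $v$ is a potential, so its harmonic part is zero, and its potential part is $G\mathcal{H}v$.
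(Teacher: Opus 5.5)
Your proposal is correct and follows the paper's route: the corollary is stated there as an immediate consequence of (ii) $\Rightarrow$ (i) in Theorem~\ref{thm:main_measure}, with (ii) read as $v\in\P$. Your remark that $\mathcal{H}Gk=k$ for potentials, which gives $G\mathcal{H}v=v$ from $v=Gk$, is exactly the step the paper leaves implicit when it writes ``$G\mathcal{H}v = v$, i.e., $v\in \P$''.
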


 The proof is based on the ground state transform, see \cite[Section~4.2]{KPP20}, which allows to reduce the case of Schrödinger operators to the case of graphs with killing term and the counting measure. Denote the operator of multiplication with $u$ by $T_u$ from $C(X)$ to $C(X)$. 
 Then, there is a graph $(b_v,c_v)$ over $(X,v^2m)$ with $b_v(x,y) = b(x,y)v(x)v(y)$ and $c_v = mv \mathcal{H}v \geq 0$ such that the corresponding formal Laplacian $\mathcal{L}_{v,m}$ satisfies
\begin{align*}
	\mathcal{L}_{v,m} = 
	T_v^{-1} \mathcal{H} T_v
\end{align*}
on $\F_v=v^{-1}\F$. We observe that $T_v$ restricts to a unitary operator from $\ell^2(X,v^2m)$ to $\ell^2(X,m)$. Normalizing the measure, we obtain that 
 \begin{align*}
	\mathcal{L}_{v } = 
	T_{v^2m} \mathcal{L}_{v,m}
\end{align*}
is the formal Laplacian associated with the graph $(b_v,c_v)$ over the measure space $(X,1)$. 
We denote the Green function of $\mathcal{L}_v$ by $G_v$ and 	of $\mathcal{L}_{v,m}$ by $G_{v,m}$. Then, we have  following immediate lemma.
\begin{lemma}
\label{lem:ground_G} Let $H$ be a subcritical positive Schrödinger operator over $(X,m)$ and $0< v\in \F$ be such that $\mathcal{H}v \ge 0$. Then, for all $x \in X$,
\begin{align*}
v(x)^{-1}G \mathcal{H}v(x)=  G_vc_v(x)=G_{v,m}(c_v/(v^2m))(x)	.
\end{align*}
\end{lemma}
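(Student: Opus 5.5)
The plan is to derive the Green function of each transformed operator from that of $\mathcal{H}$ via the conjugation $\mathcal{L}_{v,m}=T_v^{-1}\mathcal{H}T_v$. The identity then follows by plugging in the definitions and cancelling factors of $v$ and $m$.

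\emph{Step 1: identify $G_{v,m}$.} The Green function $G$ of $\mathcal{H}$ on $(X,m)$ satisfies $\mathcal{H}G(\cdot,y)=1_y/m(y)$, i.e.\ $\mathcal{H}$ applied to $Gf=\sum_y G(\cdot,y)f(y)m(y)$ returns $f$. Equivalently, $G$ is the integral kernel of the limit of the resolvents of $H$ with respect to $m$. Since $T_v$ is unitary from $\ell^2(X,v^2m)$ to $\ell^2(X,m)$ and intertwines the forms (ground state transform, \cite[Section~4.2]{KPP20}), the semigroups satisfy $e^{-tL_{v,m}}=T_v^{-1}e^{-tH}T_v$. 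Taking the Laplace transform of the kernels, and recalling that a kernel with respect to $v^2m$ differs from one with respect to $m$ by the factor $v^2$, I expect
\[
G_{v,m}(x,y)=\frac{G(x,y)}{v(x)v(y)},
\]
where $G_{v,m}$ is taken as a kernel with respect to $v^2m$, so that $G_{v,m}f=\sum_y G_{v,m}(\cdot,y)f(y)v(y)^2m(y)$.

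\emph{Step 2: identify $G_v$.} For $G_v$ the counting measure is used, and the formal Laplacian is $\mathcal{L}_v=T_{v^2m}\mathcal{L}_{v,m}$. Hence $G_v=G_{v,m}T_{v^2m}^{-1}$ as operators, and $G_v(x,y)=G_{v,m}(x,y)$ as plain kernels once the measure weights are accounted for. This gives the second equality $G_vc_v=G_{v,m}(c_v/(v^2m))$ immediately.

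\emph{Step 3: compute.} With $c_v=mv\mathcal{H}v$ I get
\[
G_vc_v(x)=\sum_y \frac{G(x,y)}{v(x)v(y)}\,m(y)v(y)\mathcal{H}v(y)=v(x)^{-1}\sum_y G(x,y)\mathcal{H}v(y)m(y)=v(x)^{-1}G\mathcal{H}v(x).
\]
All terms are non-negative, so no convergence issues arise and every sum may be $+\infty$ consistently.

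\emph{Main obstacle.} The only delicate point is the justification of Step 1. This requires that the Green function defined in \cite[Definition~5.13]{KPP20} really transforms under the ground state transform, including the compatibility of the form domains (the $C_c$-closures correspond under $T_v$, since $T_v$ maps $C_c(X)$ onto itself). I would handle this either through the semigroup or via exhaustions, using the Dirichlet-restricted finite-volume Green functions, where the transformation is a finite matrix identity, and then passing to the monotone limit.
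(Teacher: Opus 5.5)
Your proposal is correct and follows essentially the same route as the paper, which takes your exhaustion option: it writes $(H_{K_n})^{-1}=T_v(L_{v,K_n})^{-1}T_mT_v$ for the Dirichlet restrictions and passes to the limit via the finite approximation of the Green function in \cite[Theorem~5.16~(b)]{KPP20} to get $G(x,y)=m(y)v(y)v(x)G_v(x,y)$. It then carries out the computation of your Step~3. The only difference is normalization: your kernel for $G$ is taken with respect to $m$, while the paper's $G(x,y)=\lim_n (H_{K_n})^{-1}1_y(x)$ absorbs the factor $m(y)$, so the operator $G\mathcal{H}v$ is the same in both.
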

\begin{proof}
We observe that for a finite set $K \subseteq X$ 
the Dirichlet restriction $H_K$ of $H$ is injective, \cite[Lemma~5.15]{KPP20}, and
\begin{align*}
(H_{K_n})^{-1}	= T_v (L_{v,m,K_n})^{-1} T_v^{-1} = T_v (L_{v,K_n})^{-1} T_m T_v
\end{align*}
Hence, we have  by the finite approximation of the Green function, \cite[Theorem~5.16~(b)]{KPP20}, we have for all $x,y \in X$
\begin{multline*}
G(x,y)=\lim_{n\to\infty} (H_{K_n})^{-1}1_y(x) =\lim_{n\to\infty} v(L_{v,K_n})^{-1}(vm1_y)(x)\\
 ={m(y)v(y)v(x)}G_v(x,y).
\end{multline*}
and, therefore, for all $x \in X$,
\begin{multline*}
G\mathcal{H}v(x) =\sum_{y\in X} G(x,y) \mathcal{H}v(y)\\  =\sum_{y\in X} 
{m(y)v(y)}{v(x)}G_v(x,y)\frac{1}{v(y)m(y)}\Big(\mathcal{L}_{v}\frac{1}{v}\Big)v(y)   = {v(x)}G_vc_v(x).
\end{multline*}
This proof for $G_{v,m}$ is similar.
\end{proof}
 
Since the semigoup of $H$ is a posivitiy preserving operator on a discrete space, it admits a positive kernel. Hence, we can define $e^{-tH}f$ pointwise as a function from $X$ to $[0,\infty]$ for all $f\ge0$. The following lemma gives the connection between the semigroup of $H$ and the semigroup of $\mathcal{L}_{v,m}$.

\begin{lemma}
\label{lem:ground_sg} Let $H$ be a positive Schrödinger operator over $(X,m)$ and $0< v\in \F$ be such that $\mathcal{H}v \ge 0$. Then, for all $x \in X$,
\begin{align*}
e^{-tH}v(x)  =ve^{-tL_{v,m}}1(x)\quad\mbox{ and }\quad   e^{-tH} \mathcal{H}v(x)   =   e^{-tL_{v,m}}\frac{c_v}{vm}(x).
\end{align*}
\end{lemma}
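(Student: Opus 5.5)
The approach is to transport the semigroup of $H$ to the semigroup of $\mathcal{L}_{v,m}$ through the ground state transform, exactly as Lemma~\ref{lem:ground_G} did for Green functions. The key identity to establish first is the operator-level conjugation
\begin{align*}
e^{-tH} = T_v\, e^{-tL_{v,m}}\, T_v^{-1}
\end{align*}
on $\ell^2(X,m)$. Here $L_{v,m}$ is the self-adjoint operator on $\ell^2(X,v^2m)$ associated with the graph $(b_v,c_v)$ over $(X,v^2m)$. Since $T_v\colon \ell^2(X,v^2m)\to\ell^2(X,m)$ is unitary, it suffices to show that $T_v^{-1} H T_v = L_{v,m}$ as self-adjoint operators. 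I would do this at the form level.

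\textbf{Step 1: conjugate the forms.} For $\varphi\in C_c(X)$, the ground state representation \cite[Section~4.2]{KPP20} gives
\[
\langle \mathcal{H}(v\varphi), v\varphi\rangle_m = \tfrac12\sum b_v(x,y)(\varphi(x)-\varphi(y))^2+\sum c_v\varphi^2/(\ldots),
\]
with the weights normalized as in the definition of $c_v$. So the form of $H$ restricted to $C_c(X)$ corresponds under $T_v$ to the form of $(b_v,c_v)$ on $C_c(X)$. Both $H$ and $L_{v,m}$ are defined as the operators of the closures of these forms on $C_c(X)$: for $H$ by the form density of $C_c(X)$ recalled above, and for $L_{v,m}$ by the construction of $D(Q)$. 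The unitary $T_v$ maps $C_c(X)$ onto itself, so the closures correspond as well, and the conjugation identity follows by the functional calculus.

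\textbf{Step 2: extend to positive functions pointwise.} Both semigroups are positivity preserving and have positive kernels. The conjugation gives the kernel relation $e^{-tH}(x,y)=v(x)e^{-tL_{v,m}}(x,y)v(y)^{-1}$, where kernels are taken with respect to the counting measure on matrix entries; $m$-weights need care here. For $f\ge0$, define both sides through the kernel sums and use monotone convergence along $f1_{K_n}$. This yields $e^{-tH}f=v\,e^{-tL_{v,m}}(f/v)$ pointwise, in $[0,\infty]$.

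\textbf{Step 3: specialize.} Taking $f=v$ gives the first claim, $e^{-tH}v=v\,e^{-tL_{v,m}}1$. Taking $f=\mathcal{H}v\ge0$ gives $e^{-tH}\mathcal{H}v=v\,e^{-tL_{v,m}}(\mathcal{H}v/v)$, and with $c_v=mv\mathcal{H}v$ we have $\mathcal{H}v/v=c_v/(v^2m)$. I expect the main obstacle to be this bookkeeping. The stated formula reads $e^{-tL_{v,m}}\frac{c_v}{vm}$ without a prefactor $v$, and I would reconcile it by writing $v\,e^{-tL_{v,m}}\bigl(\tfrac{c_v}{v^2m}\bigr)$. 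If the normalization of $c_v$ in the ground state transform differs, so that the killing term over $(X,v^2m)$ is $c_v/(v^2m)$, the two expressions agree, and I would verify this against the convention used in Lemma~\ref{lem:ground_G}. The remaining technical point is that the $\ell^2$ identity determines the kernels entrywise, since $1_y\in\ell^2$; monotone convergence then extends it to arbitrary positive $f$.
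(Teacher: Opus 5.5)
Your argument is the paper's: the ground state transform makes $T_v$ a unitary equivalence, giving $e^{-tH}=T_v e^{-tL_{v,m}}T_v^{-1}$, and positivity preservation plus monotone approximation of $v$ and $\mathcal{H}v$ by finitely supported functions gives the pointwise formulas. Your form-level justification of the conjugation fills in what the paper takes for granted; the transformed energy is exactly $\frac12\sum b_v(\varphi(x)-\varphi(y))^2+\sum_X c_v\varphi^2$, with no extra normalization in the $c_v$ term.

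In Step~3, your identity $e^{-tH}\mathcal{H}v=v\,e^{-tL_{v,m}}\frac{c_v}{v^2m}$ is the correct one. It is also the one actually used in the proof of Theorem~\ref{thm:main_measure}, so the printed second formula is a misprint. Drop the suggestion that a different normalization of $c_v$ reconciles the two: the left side equals $v\,e^{-tL_{v,m}}(\mathcal{H}v/v)$ however $c_v$ is named. The missing factor $v$ cannot be absorbed, because $e^{-tL_{v,m}}$ does not commute with multiplication by a nonconstant $v$.
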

\begin{proof}
	Since $T_v$ is a unitary operator from $\ell^2(X,v^2m)$ to $\ell^2(X,m)$, we have for all $t\ge 0$ that
\begin{align*}
e^{-tH} = T_v e^{-tL_{v,m}} T_v^{-1}.
\end{align*}
Since $v,\mathcal{H}v \ge 0$, we can approximate them monotonically by positive and compactly supported functions. We, therefore, can use  that the semigroups are positivity preserving to obtain the desired formulas.
\end{proof}
\begin{proof}[Proof of Theorem~\ref{thm:main_measure}]
(i) $\Leftrightarrow	$ (ii): Clearly, if $u$ is harmonic for $\mathcal{H}$, then $u/v$ is harmonic for $\mathcal{L}_{v}$.  Hence, the equivalence follows by Lemma~\ref{lem:ground_G} and Theorem~\ref{thm:main} for the counting measure. Then, by the ground state transform,  we have that every bounded harmonic function for $\mathcal{L}_{v,m}$ and consequently for $\mathcal{L}_{v}$ is zero. Hence, by Theorem~\ref{thm:main} for the counting measure, we have $G_vc_v=1$. Thus, by Lemma~\ref{lem:ground_G}, we have $G\mathcal{H}v=v$ as well. \medskip

(ii) $\Leftrightarrow	$ (iii): By Lemma~\ref{lem:ground_G} and following the proof of Lemma~\ref{lem:heat} by replacing the counting measure with the measure $m$, we have for all $t>0$ that
\begin{align*}
v^{-1}G\H v(x)=	G_{v,m}(c_v/m)(x) &=\lim_{t\to\infty}M_t(x) -\lim_{t\to\infty}e^{-tL_{v,m}}1(x),
\end{align*}
where $e^{-tL_{v,m}}1=\frac{1}{v}e^{-tH}v$ by Lemma~\ref{lem:ground_sg} and
\begin{multline*}
M_t(x) =e^{-tL_{v,m}}1(x)+\int_0^t e^{-sL_{v,m}}(c_v/(v^2m))(x) ds \\
 = 
\frac1{v}\left(e^{-tH}v(x)+\int_0^t e^{-sH} \mathcal{H}v(x) ds\right) 
\end{multline*}
by Lemma~\ref{lem:ground_sg} and $\mathcal{H}v=c_v/(vm)$. Hence,  the  equivalence holds by Theorem~\ref{thm:main}. \medskip

(ii) $\Leftrightarrow	$ (iv): This follows also immediately from Lemma~\ref{lem:ground_G}, $m\mathcal{H} u=v^{-1}\mathcal{L}_{v}(u/v )$, and the equivalence of (ii) and (iv) in Theorem~\ref{thm:main} for the counting measure, and $c_v = mv\mathcal{H}v
$.
\end{proof}

\section{Examples and applications}\label{sec:examples}

\subsection{The standard lattice $\Z^d$}
We first consider the case of the  lattice $\Z^d$ with the standard Laplacian and the counting measure which is given as
\begin{align*}\Delta f(x) = \sum_{ |y-x|=1} (f(x)-f(y)).
\end{align*}	
The corresponding graph $b$ is given by $b(x,y)=1$ if $|x-y|=1$ and $b(x,y)=0$ otherwise and the killing term is given by $c=0$. It is well-known that the graph $b$ is recurrent for $d=1,2$ and transient for $d\ge 3$. Moreover, it is also well-known that every bounded harmonic function on $\Z^d$ is constant.

For a subset $U\subseteq\Z^d$, we denote by $\Delta_U$ the restriction of $\Delta$ to $U$ with Dirichlet boundary condition, which corresponds to the graph $(b_U,d_U)$, and we write $G^U$ for its Green operator.

We draw the following immediate consequence of Section~\ref{sec:Rec}.
\begin{corollary}\label{cor:Gc_increases_with_d} 
	Let $\emptyset\neq U\subsetneq  \Z^d $.
	\begin{itemize}
		\item [(a)] If $d=1,2$, then every bounded harmonic function of $\Delta_U$ on $U$ is zero.
		\item [(b)] If $d\ge 3$ and $X\setminus U$ is finite, then there are non-zero bounded harmonic functions of $\Delta_U$ on $U$.
	\end{itemize}
\end{corollary}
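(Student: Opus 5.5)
Both parts follow from Theorem~\ref{thm:recurrence} applied to the graph $(b_U,d_U)$ over $U$. First I settle the preliminaries. Since $U\neq\emptyset$ is a proper subset of the connected graph $\Z^d$, the killing term $d_U$ is nonzero on every connected component of $(b_U,d_U)$. By the observation before Proposition~\ref{proposition:characterization thickness}, I may therefore work componentwise. On each component $C$ of $U$ the graph $(b_C,d_C)$ is connected and transient. A harmonic function on $U$ is zero if and only if its restriction to every component is zero. Moreover, a component of $U$ carries exactly the graph $(b_C, d_U|_C)$, because $b(x,y)=0$ between different components. So it suffices to argue for a single component $C$.

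For (a), let $d\in\{1,2\}$. I need that the graph $b_C$ without killing term is recurrent. The graph $b_C$ is the restriction of the standard lattice weights to $C$, that is, a subgraph of $\Z^d$. Recurrence passes to subgraphs: the capacity of a point, or equivalently the effective resistance to infinity, is monotone under removing edges (Rayleigh monotonicity). Alternatively, every function in $C_c(C)$ extends by zero, and the energy of $b_C$ is dominated by that of $\Z^d$. Hence the existence of $e_n\in C_c$ with $e_n\to1$ pointwise and energy tending to $0$ in $\Z^d$ transfers to $C$ by restricting $e_n$ to $C$, since restriction only decreases the energy sum over $C\times C$. So $b_C$ is recurrent. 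Since $d_C\neq0$, Theorem~\ref{thm:recurrence}~(a) yields that every bounded harmonic function on $C$ is zero. The one delicate point is that the recurrence criterion is the ``$1\in\D_0$'' characterization for $(b_C,0)$. I would cite it as in \cite{KLW}. This transfer is the main step, though it is standard.

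For (b), let $d\ge3$ and let $\Z^d\setminus U$ be finite. Then $U$ is connected up to finitely many finite components: removing a finite set from $\Z^d$, $d\ge 2$, leaves exactly one infinite component $C$ together with possibly some finite components. I choose the infinite component $C$. The killing term $d_C(x)=\#\{y\notin U: |x-y|=1\}$ is nonzero only at neighbours of the finite set $\Z^d\setminus U$. Hence $d_C$ is finitely supported and lies in $\ell^1(C)$. Furthermore, $b_C$ is transient: $C$ differs from $\Z^d$ only in finitely many vertices and edges, and transience is stable under such finite modifications, for instance because the effective resistance to infinity changes by a bounded amount, or by rough isometry invariance. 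Theorem~\ref{thm:recurrence}~(b) then gives a non-zero bounded harmonic function $h$ on $C$. Extending $h$ by zero to the other components of $U$ gives a non-zero bounded harmonic function of $\Delta_U$ on $U$.

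I expect the transience of $b_C$ in (b) to be the main obstacle. My first approach is to note that $\Z^d$ with finitely many vertices deleted has, outside a large ball, the same geometry as $\Z^d$. A unit flow to infinity of finite energy in $\Z^d$ can be rerouted around the finite hole at finite extra cost, which gives transience by Lyons' flow criterion.
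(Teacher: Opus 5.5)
Your proposal is correct and follows the paper's route: for (a) you use that a subgraph of a recurrent lattice is recurrent and that $d_U\neq 0$, and apply Theorem~\ref{thm:recurrence}~(a); for (b) you use transience of the cofinite subgraph together with $d_U\in\ell^1$, and apply Theorem~\ref{thm:recurrence}~(b). Two points go beyond the paper. First, your explicit passage to the infinite component $C$ in (b) is more careful than the paper's proof, which glosses over possible finite components. Second, the transience step you flag as the main obstacle needs no flow rerouting or rough-isometry invariance. The paper settles it in one line with the same energy argument you used in (a): if $e_n\in C_c(C)$ had $e_n\to 1$ pointwise and $b_C$-energy tending to $0$, then extending $e_n$ by $1$ on the finite set $\Z^d\setminus C$ would give $1\in\D_0$ for $\Z^d$, contradicting its transience.
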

\begin{proof}

For $d=1,2$  the graph $b_U$ over $U$ is recurrent as a subgraph of a recurrent graph, and, therefore, $G^U d_U(0)=1$ by Theorem~\ref{thm:recurrence}. Note that we did not assume $U$ to be connected but we can obviously argue on every connected component of $U$ separately.
	
For $d\ge 3$ the graph $b_U$ over a co-finite $U$  is transient (since if $1\notin\D_0$, then it easily follows that $1_U\notin\D_{0,U}$) and finiteness of $X\setminus U$ ensures that $d_U\in\ell^1(X)$. Hence, $G^U d_U<1$ by Theorem~\ref{thm:recurrence}.

The statement then follows in either case by Theorem~\ref{thm:main}.
\end{proof}

\begin{remark}
 The proof of (a) works on any recurrent graph. Hence, if $(b,0)$ is recurrent and $\emptyset \neq U \subsetneq X$, then every bounded harmonic function for the Dirichlet Laplace on $U$ (the graph $(b_U,d_U)$) is zero.
\end{remark}

\subsection{Subsets of half-spaces of the standard lattice}
We next consider subsets  of half-spaces of the standard lattice $\Z^d$ which includes half-spaces and orthonants. 
A half-space of $\Z^d$ is given by
$\{x\in \Z^d\mid x_j\ge0\}$ for some $j\in \{1,\ldots,d\}$.

\begin{theorem}\label{thm:Hmn}
Let $ U $ be a connected subset in $\Z^d$ which is included in a half-space. Then, every bounded harmonic function of $\Delta_U$ on $U$ is zero.
\end{theorem}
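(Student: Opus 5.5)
Without loss of generality $U \subseteq H := \{x \in \Z^d \mid x_1 \ge 0\}$. The plan is to produce a strictly positive superharmonic function that grows linearly and then run a maximum-principle argument on a bounded subharmonic function. If $d=1,2$ the claim is already Corollary~\ref{cor:Gc_increases_with_d}~(a), so the interesting case is $d \ge 3$. By Theorem~\ref{thm:Liouville}~(i.c), it suffices to show that every bounded positive subharmonic function $u$ of $\Delta_U$ on $U$ vanishes. As in the proof of Lemma~\ref{lem:stochcompl}, extend $u$ by zero to a function $v$ on $\Z^d$. For $x\in U$ we get $\Delta v(x)=\Delta_U u(x)\le 0$, and for $x\notin U$ we get $\Delta v(x)=-\sum_y b(x,y)v(y)\le 0$. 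Hence $v\ge 0$ is bounded, subharmonic on all of $\Z^d$ for the graph $b$, and vanishes outside $U$, in particular on $\{x_1<0\}$.

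The key tool is the linear function $h(x)=x_1+1$. It is harmonic on $\Z^d$, strictly positive on $H$, and $h\ge 1$ on $H$. I will compare $v$ with $\varepsilon h$ on large boxes. Set $M=\sup v$ and fix $\varepsilon>0$. Consider $w=v-\varepsilon h$ on the finite set
\[
B_R=\{x\in\Z^d \mid 0\le x_1\le R,\ |x_j|\le R \text{ for } j\ge 2\}.
\]
On the part of the boundary of $B_R$ with $x_1=-1$ we have $v=0$ and $h=0$, so $w=0$. On the part with $x_1=R+1$ we have $w\le M-\varepsilon(R+2)<0$ for $R$ large.

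The main obstacle is the lateral boundary $|x_j|=R+1$ with $j\ge 2$. There $h$ does not grow, so the linear comparison fails. I would first try a harmonic comparison function that handles the lateral directions. A natural choice is a function vanishing on $\{x_1=-1\}$, positive on the slab, and growing in every direction, for instance a discrete analogue of $x_1\cosh(\alpha x_j)$ summed over $j$.

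A cleaner route is to reduce to dimension one. Since $v$ is bounded and subharmonic on $\Z^d$, average it over the lateral coordinates, either with a Cesàro average along an exhaustion or with a limit along translates. Taking a limit over lateral translates $v(\cdot+z)$ with $z\perp e_1$ along a sequence realizing $\sup v$ gives, by compactness, a bounded subharmonic $\tilde v\ge 0$ that vanishes on $\{x_1<0\}$. Averaging $\tilde v$ further in the lateral directions produces a function $f$ of $x_1$ alone with $f(k+1)+f(k-1)-2f(k)\ge 0$. This $f$ is bounded and convex on $\{-1,0,1,\dots\}$ with $f(-1)=0$, so $f$ is nonincreasing on $\{-1,0,1,\dots\}$ and hence $f\le 0$ there. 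Thus $f\equiv 0$. Tracing this back shows that the value $\sup v$ is attained only in the limit where $f>0$, which forces $\sup v=0$.

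If this averaging argument turns out to be delicate, the fallback is Proposition~\ref{proposition:characterization thickness}~(i). Its condition says that $\Z^d\setminus U\supseteq\{x_1<0\}$ is thick, so it suffices to show that the half-space complement is thick. For that, note that a positive superharmonic function $s$ with $s\ge1$ on $\{x_1<0\}$ dominates the harmonic measure $P_x(\text{hit }\{x_1<0\})$. This harmonic measure equals $1$ because the first coordinate of simple random walk is a recurrent lazy walk on $\Z$.

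The harmonic-measure argument is the most transparent, and I would make it the main line. The step is:
\[
s(x)\ \ge\ \mathbb{E}_x\bigl[s(Y_{T})\,;\,T<\infty\bigr]\ \ge\ \mathbb{P}_x(T<\infty)=1,
\]
where $T$ is the hitting time of $\{x_1<0\}$. The first inequality is optional stopping for the positive supermartingale $s(Y_n)$ under the simple random walk on $\Z^d$, together with Fatou's lemma. The equality $\mathbb P_x(T<\infty)=1$ is the recurrence of the projection to the first coordinate, which is a lazy simple random walk on $\Z$. Then Proposition~\ref{proposition:characterization thickness} together with Theorem~\ref{thm:main} gives the claim for each connected component, and $U$ is connected.
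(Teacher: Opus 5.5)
Your main line, the harmonic-measure argument, is correct, but it is organized differently from the paper's proof.

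\textbf{The paper's route.} It applies criterion (ii.c) of Theorem~\ref{thm:Green} directly. The random walk of $(b_U,d_U)$ is simple random walk on $\Z^d$ killed when it steps out of $U$. Its exit time from $U$ is therefore at most the exit time from the half-space, and the latter is a.s.\ finite because the projected one-dimensional walk is recurrent. Lemma~\ref{lem:Green3} turns this into $G^U d_U=1$.

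\textbf{Your route.} You show that $\{x_1<0\}$, and hence its superset $\Z^d\setminus U$, is thick, using the supermartingale bound $s(x)\ge \mathbb{P}_x(T<\infty)=1$. You then conclude via Proposition~\ref{proposition:characterization thickness}, (i)$\Rightarrow$(iii).

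\textbf{Comparison.} Both arguments rest on the same recurrence fact. Yours gives as a by-product the thickness statement that the paper only records in the remark after its proof. The price is that it goes through the proposition, and so through Theorem~\ref{thm:main} and Lemma~\ref{cor:Gc_dichotomy}. The paper needs only the identity $Gc(x)=\mathbb{P}_x(\tau<\infty)$ and a comparison of exit times.

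\textbf{Two simplifications.} First, your argument works for every $d$, so the case split via Corollary~\ref{cor:Gc_increases_with_d} is unnecessary. Second, you could skip thickness altogether. For your extended function $v$, the stopped process $v(Y_{n\wedge T})$ is a bounded submartingale, which gives $v(x)\le (\sup v)\,\mathbb{P}_x(T>n)\to 0$ directly.

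\textbf{The averaging route.} This intermediate route has a real gap and should not be kept. If $v$ approaches its supremum only along points with $x_1\to\infty$, lateral translates do not produce a limit that attains $\sup v$. Moreover, vanishing lateral averages of $v\ge 0$ do not by themselves force $v=0$. So the ``tracing back'' step is unjustified. Since you drop this route in favour of the harmonic-measure argument, it does not affect your proof.
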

In \cite{BMS,Raschel} a similar result was proven for positive harmonic functions. The result above can also be deduced from the Phragmen-Lindelöf type theorems proven in \cite{Guadie} but as the proof is simple and illustrates our main result we give it here.

\begin{proof}[Proof of Theorem~\ref{thm:Hmn}]
Let $Y_n$  be  the simple random walk on $ \Z^d $
and let $ Y_n^U $ be the simple random walk on $ U $ which is killed at the boundary of $U$. Let $H=\{x\in \Z^d \mid x_j\ge 0\}$ be the halfspace including $U$. Then, we have for the exit times
\begin{align*}
	\tau_U&:=\inf\{n\ge 0\mid Y_n^U\notin U\}\\
	\tau_H&:= 
	\inf\{n\ge 0\mid Y_n\notin H\}.
\end{align*}
Clearly,
$$\mathbb{P}_x(\tau_U<\infty)\ge \mathbb{P}_x(\tau_H<\infty)$$ for all $x\in U$.
Furthermore, the projection of the random walk $Y_n$ on the $j$-th coordinate is a one-dimensional simple random walk which is recurrent. Hence,  $\mathbb{P}_x(\tau_U<\infty)\ge\mathbb{P}_x(\tau_H<\infty)=1$ for all $x\in U$.  Thus, the theorem follows from Theorem~\ref{thm:Green}~(ii.c) and Theorem~\ref{thm:main}.
\end{proof}

\begin{remark}
In view of Proposition~\ref{proposition:characterization thickness} this implies that the complement of any set contained in a half-space is thick.
\end{remark}

\subsection{Percolation clusters in the supercritical regime}

Next, we consider the case of percolation clusters in the supercritical regime. We consider the standard lattice $\Z^d$ with $d\ge 2$ and the standard weights and the counting measure. We perform Bernoulli bond percolation on $\Z^d$ with parameter $p\in (0,1)$, i.e., we keep each edge independently with probability $p$ and delete it with probability $1-p$. We can model the arising random graph as a translation dynamical system 
$\big( \{0,1\}^{E_d}, \mathbb{P}_p, (\sigma_n) \big)$, where $E_d$ denotes the set of edges in $\Z^d$, and $\mathbb{P}_p$ is the product measure obtained from the Bernoulli distribution for every edge. Clearly, we can identify each $\omega \in \{0,1\}^{E_d}$ with a subgraph of the lattice graph, with the edge $(x,y) \in E_d$ appearing in this subgraph if and only if $\omega((x,y)) = 1$. 
Further, $\sigma = (\sigma_n)_{n \in \Z^d}$ denotes the ergodic, in fact mixing, and essentially free measure preserving action of $\Z^d$ on $\{0,1\}^{E_d}$ by translations such that $\sigma^n \omega((x,y)) = \omega((x+n,y+n))$ for $n \in \Z^d$ and $(x,y) \in E_d$. 
It is well-known that there is a critical parameter $p_c(d)\in (0,1)$ such that for all $p>p_c(d)$, for  $\mathbb{P}_p$-almost every $\omega$, there is a unique infinite cluster $\mathcal{C}_\infty(\omega)$ and for all $p<p_c(d)$ there are almost surely only finite clusters. 

Assuming now $p > p_c(d)$, we consider the operator with Dirichlet boundary conditions. Let $b_\omega$ be the graph of  the unique infinite cluster $\mathcal{C}_\infty(\omega)$. Furthermore, for the Laplacian $\Delta$ on $\Z^d$, we let $c_\omega\colon 	\mathcal{C}_\infty(\omega) \to \{0,\ldots,2d-1\}$ be given by
$$c_\omega(x)=\#\{ (x,y) \in E_d \mid \omega((x,y)) = 0 \},$$
i.e., $c_\omega$ counts the number of adjacent edges to $x$ in $\Z^d$ that have been deleted. We denote the corresponding Laplacian with Dirichlet boundary condition by $$\Delta_\omega=\mathcal{L}_{b_\omega,c_\omega}$$ and we say $u$ is a harmonic function on $\mathcal{C}_\infty(\omega)$ if $u$ vanishes outside $\mathcal{C}_\infty(\omega)$ and $\Delta_\omega u=0$ on $\mathcal{C}_\infty(\omega)$.

\medskip

In \cite[Theorem~4]{Barlow} the author shows the a.s.\@ absence of non-constant positive harmonic functions for the Laplacian $\mathcal L_{b_\omega,0}$, which can be interpreted as the Laplacian on $\mathcal{C}_\infty(\omega)$ with Neumann boundary condition. An analogous statement for bounded harmonic functions can be deduced from the results in \cite{Kaimanovich90}, see also  \cite[Lemma~4.6]{BLS99}. Our main result on the Laplacians $\Delta_{\omega}$ with Dirichlet boundary condition is the following.

\begin{theorem} \label{thm:percolation}
	Let $d\ge 2$ and $p>p_c(d)$. Then, for $\mathbb{P}_p$-almost every $\omega$, every bounded harmonic function on $\mathcal{C}_\infty(\omega)$ is zero.
\end{theorem}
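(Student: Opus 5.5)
We aim to verify condition (iii.b) of Theorem~\ref{thm:Heat} for the graph $(b_\omega,c_\omega)$ on $\mathcal{C}_\infty(\omega)$, that is, to show that the Neumann process on the cluster accumulates infinite killing. By Theorem~\ref{thm:main} this gives the Liouville property. First we dispose of trivialities. Since $p<1$, almost surely some vertex of $\mathcal{C}_\infty(\omega)$ has a deleted adjacent edge, so $c_\omega\neq0$ and the graph is transient. Since $b_\omega$ has degrees bounded by $2d$, the graph $(b_\omega,0)$ is stochastically complete, so $\zeta=\infty$ almost surely for the associated Markov process $(\mathbb{X}_t)$ of $b_\omega$. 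For $d=2$ one could instead simply argue that $b_\omega$ is a subgraph of the recurrent lattice $\Z^2$, hence recurrent, and invoke Theorem~\ref{thm:recurrence}~(a). The argument below, however, is uniform in $d\ge2$.

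\textbf{Environment process.} We use the environment viewed from the particle. Let $\mathbb{Q}=\mathbb{P}_p(\,\cdot\mid 0\in\mathcal{C}_\infty)$. Take the continuous-time walk $(\mathbb{X}_t)$ of $b_\omega$ started at $0$, which jumps across each open edge at rate $1$, and set $\omega_t=\sigma^{\mathbb{X}_t}\omega$. This is a Markov process on $\{\omega: 0\in\mathcal{C}_\infty(\omega)\}$. Reversibility of the walk with respect to counting measure, together with translation invariance of $\mathbb{P}_p$, shows that $\mathbb{Q}$ is reversible, and in particular stationary, for $(\omega_t)$. Ergodicity of $(\omega_t)$ under $\mathbb{Q}$ is classical; see De Masi--Ferrari--Goldstein--Wick and Berger--Biskup. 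I would cite it, or rederive it from ergodicity of $\sigma$ restricted to the cluster via the standard argument that invariant sets of the environment process are shift invariant.

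\textbf{Ergodic theorem.} Write $c_\omega(\mathbb{X}_t)=F(\omega_t)$ with $F(\omega)=c_\omega(0)$, a bounded function. Birkhoff's ergodic theorem gives, for $\mathbb{Q}$-a.e.\ $\omega$ and almost every path,
\[
\frac1t\int_0^t c_\omega(\mathbb{X}_s)\,ds \longrightarrow \mathbb{E}_{\mathbb{Q}}[c_\omega(0)].
\]
The limit is strictly positive, because with positive $\mathbb{Q}$-probability the origin lies in the infinite cluster and has a closed edge attached. It follows that $\int_0^\infty c_\omega(\mathbb{X}_t)\,dt=\infty$ $\mathbb{P}_0$-a.s., for $\mathbb{Q}$-a.e.\ $\omega$.

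\textbf{From the origin to all configurations.} To pass from $\mathbb{Q}$ to $\mathbb{P}_p$, note that for each $x\in\Z^d$ the same statement holds with starting point $x$, for $\mathbb{P}_p$-a.e.\ $\omega$ with $x\in\mathcal{C}_\infty(\omega)$, by translation invariance. A countable union over $x$ then gives a full-measure set of $\omega$ on which (iii.b) holds for every starting point in the cluster. By Theorem~\ref{thm:Heat} this yields $G c_\omega=1$, and Theorem~\ref{thm:main} finishes the proof.

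\textbf{Main obstacle.} The main obstacle is the ergodicity of the environment process. Stationarity is a routine reversibility computation, but ergodicity needs the known argument that a set invariant under the environment dynamics is $\sigma$-invariant modulo $\mathbb{Q}$, which uses connectivity of the infinite cluster. If citing it is unsatisfactory, a fallback for $d\ge3$ is to use Lemma~\ref{lem:recurrence2} in reverse via Theorem~\ref{thm:decomposition}~(v). This would require showing that $\sum_{y\notin A} G^{b_\omega}(x,y)c_\omega(y)=\infty$ for non-thick $A$, using Green function bounds on the cluster of order $|x-y|^{2-d}$ together with the positive density of closed edges along the cluster. That route is considerably more technical, so I would pursue the ergodic route first.
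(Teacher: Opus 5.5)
Your argument is correct, but it takes a genuinely different route from the paper.

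\textbf{The paper's route.} The paper uses criterion (ii.c) of Theorem~\ref{thm:Green}. Since $\deg(x)=2d$ for every $x\in\mathcal{C}_\infty(\omega)$, the discrete-time walk of $(b_\omega,c_\omega)$ is the simple random walk on all of $\Z^d$, killed the first time it steps along a closed edge. So it suffices to show that this lattice walk a.s.\ leaves the cluster. This walk does not depend on $\omega$, so the relevant measure is the product $\mathbb{P}_s\times\mathbb{P}_p$, which is trivially invariant under the skew product $T(f,\omega)=(\theta f,\sigma^{-f_1}\omega)$. The only real input is ergodicity of $T$, obtained from transience, ergodicity of $\theta$ and mixing of $\sigma$. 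Birkhoff's theorem then shows that the walk visits $\Z^d\setminus\mathcal{C}_\infty(\omega)$. The case $d=2$ is handled separately by recurrence and Theorem~\ref{thm:recurrence}~(a).

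\textbf{Your route.} You verify (iii.b) of Theorem~\ref{thm:Heat} for the Neumann walk on the cluster itself. Its law depends on $\omega$, so you need the environment seen from the particle. Stationarity of $\mathbb{Q}$ is routine here. It is indeed $\mathbb{Q}$ itself, not a degree-reweighted measure, because the paper's process jumps at rate $b(x,y)$ and is reversible for counting measure. Ergodicity, however, is a substantial cited theorem that rests on uniqueness and connectivity of $\mathcal{C}_\infty$.

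\textbf{What each buys.} You get a proof that is uniform in $d\ge 2$. You also get a quantitative conclusion: the killing accumulates linearly, $\int_0^t c_\omega(\mathbb{X}_s)\,ds\sim t\,\mathbb{E}_{\mathbb{Q}}[c_\omega(0)]$. The paper only uses that the walk eventually crosses a closed edge, but in exchange it avoids the environment-process machinery.

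\textbf{Two small points.} First, positivity of $\mathbb{E}_{\mathbb{Q}}[c_\omega(0)]$ deserves a line, since FKG points the wrong way: $\{0\in\mathcal{C}_\infty\}$ is increasing while $\{c_\omega(0)>0\}$ is decreasing. It follows because a.s.\ $\emptyset\neq\mathcal{C}_\infty\neq\Z^d$, so some cluster vertex has a closed edge. Translation invariance then gives $\mathbb{P}_p(0\in\mathcal{C}_\infty,\ c_\omega(0)>0)>0$. Second, your fallback cannot literally run Lemma~\ref{lem:recurrence2} in reverse, because $G^bc=\infty$ does not imply $Gc=1$ (see the example after Theorem~\ref{thm:decomposition}). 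One really has to establish (v) for every non-thick $A$.
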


\begin{proof}
	For $d=2$, the simple random walk on $\Z^2$ is recurrent and together  with the observation that $c_{\omega} \neq 0$ almost surely, 
	 the statement follows in this case from Theorem~\ref{thm:recurrence}~(a).
	So we assume now that $d \geq 3$. We will show that for almost all $\omega$ and $o \in \mathcal{C}_{\infty}(\omega)$, the simple random walk starting at $o$ will leave $\mathcal{C}_{\infty}(\omega)$ with probability one. This will imply that one will $\mathbb{P}_p$-almost surely pass along a deleted edge. 
	Let $o \in \Z^d$ and set $F=\{ \pm e_j \mid 1 \leq j \leq d \}$, where $e_j$ denotes the standard unit vector which is $1$ at coordinate $j$. We denote the simple random walk on $\Z^d$ starting at $o$ by $(Y_k)$, where $Y_k$ is modeled over the product space $F^{\N}$, endowed with product measure $\mathbb{P}_s$. For $f \in F^{\N}$, we denote its $k$-th coordinate by $f_k$. Defining by $\theta((f_k)) = (f_{k+1})$ the shift transformation on $F^{\N}$, we obtain a skew product transformation $T$ on the probability space $(F^{\N} \times \{0,1\}^{E_d}, \mathbb{P}_s \times \mathbb{P}_p)$, namely 
	\[
	T(f,\omega) = \big(  \theta f,\, \sigma^{-f_1} \omega \big).
	\]
	Note that by transience of the simple random walk on $\Z^d$, $d \geq 3$, the realizations of $\mathbb{P}_s$-almost all $f$ as paths in $\mathbb{Z}^d$ starting at $o$ eventually leave every finite set. Combining this with the fact that $\theta$ is ergodic and $\sigma$ is mixing, it follows from standard arguments from ergodic theory (using e.g.\@  characterizations of ergodicity and mixing as in \cite[Theorem~8.10 and Theorem~9.4]{EFHN}) that $T$ must be ergodic. We define the set $C_o = \{ \omega \mid o \in \mathcal{C}_{\infty}(\omega)  \}$ and denote its complement by $C_o^c$. By Birkhoff's ergodic theorem, for $\mathbb{P}_s \times \mathbb{P}_p$-almost all $(f,\omega)$ we obtain 
	\begin{align*}
		\lim_{N \to \infty} \frac{1}{N} \sum_{k=0}^{N-1}  1_{F^{\N} \times C_o^c}\big( T^k(f,\omega)\big) = \mathbb{P}_p(C_o^c) = 1- \mathbb{P}_p(C_o).
	\end{align*}
	The right hand side in the above equation is positive since $0 < \mathbb{P}_p(C_o)<1$. This shows that for almost all $(f, \omega)$, there is
	 one (in fact, there are infinitely many) $k \in \N$, such that for $g_k = \sum_{i=1}^k f_i$,
	\[
	T^k(f,\omega) = \big( \theta^k, \sigma^{-g_k}\omega \big) \in F^{\N} \times C_o^c.
	\] 
	Inspection of the second component gives $o \notin  \mathcal{C}_{\infty}(\sigma^{-g_k}\omega)$, and equivalently, $o+g_k \notin \mathcal{C}_{\infty}(\omega)$. We have thus shown that with probability one, $Y_k$ leaves $\mathcal{C}_{\infty}(\omega)$ for $\mathbb{P}_p$-almost all $\omega$. 	
	The validity of the theorem now follows from   Theorem~\ref{thm:Green}~(ii.c) in combination with Theorem~\ref{thm:main}.
\end{proof}

\eat{
The proof builds on two lemmas which are proven next. 

\medskip

Given $r > 0$ and $x \in \Z^d$, we denote the closed $\ell^1$-ball of radius $r$ about $x$ by $B_r(x) = \{y \in \Z^d \mid |x-y|_1  \leq r\}$. We further write $B_r^{\omega}(x) = B_r(x) \cap \mathcal{C}_{\infty}(\omega)$. 

\begin{lemma} \label{lemma:percET}
	Let $p > p_c(d)$. For $\mathbb{P}_p$-almost every $\omega$ and every $x \in \Z^d$, 
	\begin{align*}
		\lim_{r\to\infty} \frac{1}{\#B_r^\omega(x)}\sum_{y\in B_r^\omega(x) } c_{\omega}(y) = \frac{1}{\mathbb{P}_p(  0 \in \mathcal{C}_{\infty})} \int_{\{ 0 \in \mathcal{C}_{\infty} \}} c_{\omega}(0)\, d\mathbb{P}_p(\omega). 
		\end{align*}
\end{lemma}

	\begin{proof}
	Note first that for $\mathbb{P}_p$-almost every $\omega$, one has $\mathcal{C}_{\infty}(\sigma^n \omega) = -n + \mathcal{C}_{\infty}(\omega)$ for each $n \in \N$. Consequently, for $x,y \in \Z^d$, we have
	$x \in \mathcal{C}_{\infty}(\sigma^{(y-x)}\omega)$ if and only if $y \in \mathcal{C}_{\infty}(\omega)$. Also, for $x \in \mathcal{C}_{\infty}(\sigma^{y-x}\omega)$, we have $c_{\sigma^{y-x}\omega}(x) = c_{\omega}(y)$. Consequently, for $r > 0$, we can write 
	\begin{align*}
		\frac{1}{\#B_r^\omega(x)}\sum_{y\in B_r^\omega(x) } c_{\omega}(y) =\frac{\sum_{y \in B_r(x)} 1_{\mathcal{C}_{\infty}(\sigma^{y-x}\omega)}(x) c_{\sigma^{y-x}\omega}(x)}{\sum_{y \in B_r(x)} 1_{\mathcal{C}_{\infty}(\sigma^{y-x}\omega)}(x)}.
	\end{align*}
	By Hochman's ratio ergodic theorem for $\Z^d$-actions \cite[Theorem~1.1]{Hoc10} and translation invariance of the measure $\mathbb{P}_p$, as $r \to \infty$, the right hand side above converges $\mathbb{P}_p$-almost surely to
	\begin{align*}
		\frac{\int_{\Omega} 1_{\mathcal{C}_{\infty}(\omega)}(0) c_{\omega}(0)\, d\mathbb{P}_p(\omega)}{\int_{\Omega} 1_{\mathcal{C}_{\infty}(\omega)}(0) \, d\mathbb{P}_p(\omega)} =  \frac{1}{\mathbb{P}_p(  0 \in \mathcal{C}_{\infty})} \int_{\{ 0 \in \mathcal{C}_{\infty} \}} c_{\omega}(0)\, d\mathbb{P}_p(\omega). 
	\end{align*}  
	The proof of the lemma is finished.  
	\end{proof}

The assertion of the next lemma can be obtained from the work of Barlow. We will give a sketch of a proof combining arguments and results from \cite{Barlow}.

\begin{lemma} \label{lem:percgrowth}
	Suppose that $p>p_c(d)$. For each $x\in \Z^d$, there  is $C_0>1$ and  an almost surely finite random variable $N_x$    such that  for all $r\ge N_x$, if $x \in \mathcal{C}_{\infty}(\omega)$,
	\begin{align*}
		C_0^{-1}r^d\le \# B_r^\omega(x) \le C_0 r^d.
	\end{align*}
\end{lemma}
\begin{proof}
	The proof  follows from Theorem~2.18~(2.3) and Lemma~2.19 in \cite{Barlow}. 
	The upper bound is clear. We give the proof for the lower bound using the notation of \cite{Barlow} and refer to the original reference for the definitions.
	By \cite[Lemma~2.19]{Barlow}, for each $x \in \mathbb{Z}^d$ and $\alpha \in (0, 1/2)$,  there exists a random variable $M_x(\omega)$ and constants $c_1, c_2 > 0$ such that
	\[
	\mathbb{P}_p(M_x \ge n) \le c_1 \exp(-c_2 n^{\alpha\beta}),
	\]
	where $\beta$ is the constant from (2.2) in \cite{Barlow},
	and whenever $n \ge M_x$ and $Q$ is a cube of side $n$ with $x \in Q$, the event
	\(
	H(Q,\alpha) \cap D(Q,\alpha)
	\)
	occurs. 
	On $\{x \in \mathcal{C}_\infty\}$ we have $M_x < \infty$ almost surely by the Borel-Cantelli  lemma. We only consider configurations where $x \in \mathcal{C}_\infty$.
	
	Fix a radius $r > M_x$ and 
	choose a cube $Q$ centered at $x$ with side length
	\[
	n = \max\{\,M_x,\  2(r + k_0)\}.
	\]
	(The constant $k_0$ comes from the tiling construction in \cite[Section~2]{Barlow}.)  We wish to apply Theorem~2.18 with $y = x$ and this $r$. Let $C_H$ be the constant from (2.19) in \cite{Barlow}.
	
	Since $r > M_x$, for all sufficiently large $r$ we have $n = 2(r + k_0)$.
	The cube $Q$ satisfies:
	\begin{itemize}
		\item $n \ge M_x$, therefore $H(Q,\alpha) \cap D(Q,\alpha)$ holds.
		\item $x \in \mathcal{C}^\vee(Q^+) \cap Q^\oplus$ (by the definition of the events and because $x$ is far from the boundary when $n$ is large).
		\item $Q(x, r + k_0)^+ \subseteq Q^+$: indeed, $Q(x, r + k_0)$ has radius $r + k_0$ in the $|\cdot|_\infty$ metric;
		its ``$+$''-enlargement has radius $\frac{3}{2}(r + k_0)$.  Since $Q^+$ (centered at $x$) has radius $\frac{3}{4}n$,
		the inclusion holds provided $\frac{3}{2}(r + k_0) \le \frac{3}{4}n$, i.e.,\ $n \ge 2(r + k_0)$, which is true by construction.
		\item $C_H n^\alpha \le r \le n$: the inequality $r \le n$ is obvious.  For the lower bound, note that
		$n = 2(r + k_0) \le 3r$ (for $r \ge k_0$). Hence
		\[
		C_H n^\alpha \le C_H (3r)^\alpha \le r
		\]
		provided $r \ge (C_H 3^\alpha)^{1/(1-\alpha)}$.
	\end{itemize}
	All the hypotheses of Theorem~2.18(a) are therefore satisfied for the ball $B_r^\omega(x)$.
	Consequently,
	\[
	C_V r^d \le \mu(B_r^\omega(x)) \le C_0 r^d,
	\]
	and $ \#B_r^\omega(x) /2d\le\mu(B_r^\omega(x))\le 2d\#B_r^\omega(x)$.
	Define
	\[
	N_x = \max\{\, M_x,\; (C_H 3^\alpha)^{1/(1-\alpha)},\; k_0 + 1 \,\}.
	\]
	Then for \emph{every} $r > N_x$ we can repeat the above choice of $Q$ (with $n = 2(r+k_0)$) and obtain the desired volume bound.  This completes the proof.
\end{proof}

\begin{proof}[Proof of Theorem~\ref{thm:percolation}]
	Clearly, for $d=2$, the graph is recurrent and, therefore, the statement follows by Theorem~\ref{thm:recurrence}. For $d\ge 3$, the graph is transient and we have to show that $G_\omega c_\omega(x)=\infty$ for the Green function $G_{\omega}$ of the Laplacian associated with $b_{\omega}$ and some $x\in \mathcal{C}_\infty(\omega)$.
	By the Lemma~\ref{lemma:percET}, for almost-every realization $\omega$, for all $x \in \Z^d$ and  $B_r^\omega(x)=B_r(x)\cap\mathcal{C}_\infty(\omega)$, we have
	\begin{align*}
		\lim_{r\to\infty} \frac{1}{\#B_r^\omega(x)}\sum_{y\in B_r^\omega(x) } c_{\omega}(y) = \mu,
	\end{align*}
	where $\mu\in (0,2d)$ is the expected value of neighbors of $0$ that are not contained in $\mathcal{C}_{\infty}$, conditioned on the event that $0$ belongs to $\mathcal{C}_{\infty}$.
	Furthermore, the Gaussian heat kernel estimates, \cite[Theorem~1]{Barlow}, imply that there is $C>0$ and an almost surely finite random variable $S_x$ for $x\in \mathcal{C}_\infty$ such that for all $y\in \mathcal{C}_\infty$ with $x \neq y$
\begin{align*}
	G_\omega(x,y) = \int_0^\infty e^{-t\Delta_\omega}(x,y) dt \ge C\int_{S_x(\omega)\vee |x-y|_1}^\infty\hspace{-1cm} t^{-d/2} e^{-\frac{|x-y|_1^2}{Ct}} dt \ge C |x-y|_1^{2-d},
\end{align*}
where the last estimate follows by the change of variable $s=\frac{|x-y|_1^2}{Ct}$ and the fact that $d\ge 3$. We note that the Lemma~\ref{lem:percgrowth} provides $C_0>1$ and  the existence of an almost-surely finite random variable $N_x$  such that for $r \geq N_x^{\omega}$ and $x \in \mathcal{C}_{\infty}(\omega)$, we have
\begin{align*}
	C_0^{-1}r^d\le \# B_r^\omega(x) \le C_0 r^d.
\end{align*}
Let $A^{\omega}_k(x)=\{y\in \mathcal{C}_\infty(\omega)\mid \rho^{k-1} < |x-y|_1 \leq \rho^{k}\}$, for some $\rho>6C_0^2$. We now use the considerations above to estimate $G_\omega c_\omega(x)$ from below for a full-measure set  of $\omega$ by
\begin{align*}
G_\omega c_\omega(x) 
&\ge C \sum_{k=1}^\infty \sum_{y\in A^{\omega}_k(x)} |x-y|_1^{2-d} c_{\omega}(y)\\
&\ge C \sum_{k=1}^\infty\rho^{k(2-d)} \left(\sum_{y\in B_{\rho^{k}}^\omega(x) } c_{\omega}(y)-\sum_{y\in B_{\rho^{k-1}}^\omega(x) } c_{\omega}(y)\right)\\
&\ge C \sum_{k=k_0}^\infty \rho^{k(2-d)}\rho^{kd} \left(\frac{\mu}{2}C_0^{-1} -\frac{3\mu}{2}C_0\rho^{-1}\right)=\infty,
\end{align*}	
where we choose  $k_0$ large enough such that $\rho^{k_0}\ge S_x(\omega)\vee N_x(\omega)$ and such that the ergodic averages are close enough to their limits, and we note that the expression in parentheses is positive due to $\rho>6C_0^2$. Hence, the statement follows by Theorem~\ref{thm:recurrence}.
\end{proof}}

\subsection{Weakly spherically symmetric graphs} 

In this final section we give a short discussion of weakly spherically symmetric graphs, see e.g. \cite{KLW13} or \cite[Chapter~9]{KLW}. By the  reasoning in the introduction, we can interpret them as subgraphs of a larger graph where the killing term constitutes the boundary of the subgraph. For weakly spherically symmetric graphs with measure and without killing term, the $\ell^1$-Liouville property for positive summable functions has been characterized in \cite[Theorem~3.5]{AS23} by stochastic completeness of the graph. In the theorem below, we provide a characterization of non-existence of non-trivial bounded harmonic functions in the presence of a killing term.
A graph $(b,c)$ over $X$ is called {\em weakly spherically symmetric} with respect to a vertex $o\in X$ if for all $x\in X$ and $r=|x|=d(x,o)$, the functions
\begin{align*}b_\pm(x) &=  \sum_{y\in S_{r\pm1}(o)} b(x,y)
\end{align*}
and $c$ depend only on $r$, where  $S_r=S_r(o)=\{x\in X\mid d(x,o)=r\}$ denotes the sphere of radius $r$ around $o$ with respect to the path distance function. We also write $B_r=B_r(o)$ for the corresponding closed balls. Further, we extend $b_{\pm}$ and $c$ in a natural way to mappings from finite subsets of $X$ by taking their sum over these sets.

\begin{theorem}
	Let $(	b,c)$ be a weakly spherically symmetric graph over $X$ with respect to $o\in X$. Then, every bounded harmonic function is zero if and only if
	\begin{align*}
		\sum_{r=0}^\infty\frac{ c(B_r )}{b_+(S_r )}=\infty.
	\end{align*}
	Moreover, one has $Gc=1$ if and only if $G^bc=\infty$ for the Green function $G^b$ of the graph $b$.
\end{theorem}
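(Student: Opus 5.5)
Since $(b,c)$ is weakly spherically symmetric, every bounded harmonic function can be averaged over spheres. The plan is therefore to reduce the question to radially symmetric functions and then solve the resulting one-dimensional recursion explicitly.

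\emph{Step 1: reduction to radial functions.} By Theorem~\ref{thm:main} it suffices to decide when $Gc=1$. Because $b_\pm$ and $c$ depend only on $r$, averaging over spheres maps harmonic functions to harmonic functions. Concretely, for a positive function $u$ we set
$$\bar u(x)=\frac{1}{\#S_r}\sum_{y\in S_r}u(y)\qquad\text{for } |x|=r.$$
Summing $\mathcal L u$ over $S_r$ and using $\sum_{x\in S_r}\sum_{y\in S_{r+1}}b(x,y)=b_+(S_r)=b_-(S_{r+1})$ shows that $\mathcal L \bar u$ and $\mathcal L u$ have the same sphere sums, so sphere averages of positive (sub)harmonic functions are again (sub)harmonic. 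The function $1-Gc$ is a positive bounded harmonic function. It is also radial: $Gc(x)=\mathbb P_x(\tau<\infty)$ by Lemma~\ref{lem:Green3}, and by weak spherical symmetry the radial process $|Y_n|$ is itself a Markov chain. Hence Theorem~\ref{thm:Liouville} (i.c) reduces to the question: does a nonzero bounded positive radial harmonic function $h=h(r)$ exist?

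\emph{Step 2: the recursion.} For a radial function $h$, summing the harmonicity equation over $S_r$ gives
$$b_+(S_r)\bigl(h(r)-h(r+1)\bigr)+b_-(S_r)\bigl(h(r)-h(r-1)\bigr)+c(S_r)h(r)=0 .$$
Since $b_-(S_r)=b_+(S_{r-1})$, summing this identity telescopically from $0$ to $r$ yields
$$b_+(S_r)\bigl(h(r+1)-h(r)\bigr)=\sum_{k\le r}c(S_k)h(k).$$
Normalize $h(0)=1$. Then $h$ is increasing, and
$$h(r+1)=1+\sum_{j\le r}\frac{1}{b_+(S_j)}\sum_{k\le j}c(S_k)h(k).$$
Because $h\ge 1$, boundedness forces $\sum_j c(B_j)/b_+(S_j)<\infty$. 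Conversely, if the sum $\Sigma=\sum_j c(B_j)/b_+(S_j)$ is finite, then
$$h(r+1)\le 1+\sum_{j\le r}\frac{c(B_j)}{b_+(S_j)}\max_{k\le j}h(k),$$
and a Gronwall-type iteration gives $h\le e^{\Sigma}$, so $h$ is bounded. The unique radial harmonic solution with $h(0)=1$ is thus bounded if and only if the series converges. Together with Step 1 and Theorem~\ref{thm:main}, this proves the first equivalence. I expect Step 1 to be the main obstacle: one must rigorously justify that $1-Gc$ is radial, or alternatively that averaging produces a nonzero function. The averaging route needs care at the level of positivity, because averaging a sign-changing function could give zero. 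Working with the positive function $1-Gc$ avoids this issue.

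\emph{Step 3: the ``moreover'' part.} Lemma~\ref{lem:recurrence2} already gives $Gc=1\Rightarrow G^bc=\infty$. For the converse, compute $G^b$ radially. The radial Green function at $o$ is
$$g(r)=\sum_{j\ge r}\frac{1}{b_+(S_j)},$$
which is finite when $b$ is transient. For general $x$, the Green function $G^b(x,\cdot)$ is comparable to $G^b(o,\cdot)$ up to constants by the Harnack-type comparison used in the proof of Theorem~\ref{thm:decomposition}. Applying Fubini,
$$G^bc(o)=\sum_r g(r)\,c(S_r)\cdot\frac{1}{\#S_r}\cdot\#S_r=\sum_r c(S_r)\sum_{j\ge r}\frac{1}{b_+(S_j)}=\sum_j\frac{c(B_j)}{b_+(S_j)} .$$
Here we use that the sphere average of $G^b_o$ equals $g$. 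Hence $G^bc=\infty$ is exactly the series condition, and it is equivalent to $Gc=1$ by the first part. If $b$ is recurrent, both sides hold trivially: $G^b\equiv\infty$ in that case, and $Gc=1$ by Theorem~\ref{thm:recurrence}.
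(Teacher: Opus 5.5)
Your proof is correct. It shares the paper's skeleton: the telescoped recursion $b_+(S_r)(h(r+1)-h(r))=\sum_{k\le r}c(S_k)h(k)$, and the identity $G^bc(o)=\Sigma:=\sum_r c(B_r)/b_+(S_r)$ obtained from the explicit radial Green function. However, you prove the two directions of the first equivalence differently.

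\textbf{The paper's route.} It first proves $G^bc(o)=\Sigma$ and then splits into two cases.
\begin{itemize}
\item If $\Sigma=\infty$, the normalized radial solution is unbounded. Theorem~\ref{thm:Liouville}, applied to the weighted line graph, then kills every radial bounded positive subharmonic function, and averaging an arbitrary bounded positive subharmonic function reduces the general case to this one.
\item If $\Sigma<\infty$, the paper does not return to the radial solution. It invokes $G^bc<\infty\Rightarrow Gc<1$ from Lemma~\ref{lem:recurrence2}, i.e.\ the resolvent identity.
\end{itemize}
In this way it obtains the first equivalence and the ``moreover'' part at once.

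\textbf{Your route.} You settle the first equivalence entirely inside the one-dimensional recursion.
\begin{itemize}
\item For $\Sigma<\infty$, monotonicity gives $h(r+1)\le(1+a_r)h(r)$ with $a_r=c(B_r)/b_+(S_r)$. Hence $h\le e^{\Sigma}$, which exhibits an explicit non-zero bounded harmonic function.
\item For $\Sigma=\infty$, you average the single function $1-Gc$. By Lemma~\ref{cor:Gc_dichotomy} it is either $0$ or strictly positive, so its sphere average cannot collapse to zero.
\end{itemize}
You then derive the ``moreover'' part afterwards from $G^bc(o)=\Sigma$ and the Harnack-type comparison of $G^b(x,\cdot)$ with $G^b(o,\cdot)$. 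Your approach is more self-contained: it needs neither the resolvent formula nor the Liouville theorem on the line graph, and it produces a concrete counterexample with a quantitative bound. The paper's approach skips the Gronwall step and ties the $\Sigma<\infty$ direction to the general principle $G^bc<\infty\Rightarrow Gc<1$.

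\textbf{One correction in Step 1.} Under the definition used here only $b_\pm$ and $c$ are required to be radial. The in-sphere weight $\sum_{y\in S_r}b(x,y)$, and hence $\deg(x)$, may vary along $S_r$. So the one-step probabilities $b_\pm(r)/\deg(x)$ of the discrete-time walk depend on $x$, and $|Y_n|$ need not be a Markov chain. The radiality of $Gc$ is still true, for instance via the continuous-time process, whose rates $b_\pm(r)$, $c(r)$ are radial. You do not need it, though: the averaging of the strictly positive function $1-Gc$ that you describe already closes Step 1, given finite spheres, which the paper's averaging also assumes.
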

\begin{proof}
	The Green function $G^b$ of the weakly spherically symmetric graph $b$ can be computed explicitly, see e.g. \cite[Section~9.3]{KLW}, and one finds that for all $x\in X$ with $r=|x|$,
	\begin{align*}G^b(x,o) = \sum_{k=r}^\infty \frac{1}{b_+(S_k )}.
	\end{align*}
	Hence, we have for all $x\in X$ with $r=|x|$,
	\begin{align*}G^b c(o) = \sum_{r=0}^\infty \sum_{x\in S_r(o)} c(x) G^b(o,x) = 
		\sum_{r=0}^\infty\sum_{k=r}^\infty \frac{ c(S_r )}{b_+(S_k )}=
		\sum_{k=0}^\infty \frac{c(B_k )}{b_+(S_k )},
	\end{align*}
	by Fubini's theorem. On the other hand, there is a unique spherically symmetric harmonic function $u$ with $u(o)=1$ for $\L=\L_{b,c}$ which is  recursively given by
	\begin{align*} 
		 u(r+1)-u(r)=\frac{1}{b_+(S_r )} \sum_{k=0}^{r} c(S_k )u(k),
	\end{align*}
	 by \cite[Lemma~9.16]{KLW}. Clearly, $u$ is increasing as $c\ge0$, and, therefore, $u(k)\ge u(0)= 1$ for all $k\ge 0$. So, summing the above formula again telescopically from $0$ to $r-1$, we find the following explicit formula for  $r\ge 1$ and estimate 
\begin{align*}u(r) &= 1 + \sum_{n=0}^{r-1}\frac{1}{b_+(S_n )}
	\sum_{k=0}^{n}c(S_k ) u(k)  \ge 1 + \sum_{n=0}^{r-1}\frac{c(B_n )}{b_+(S_n )}
	  .
\end{align*}
Hence, if $G^bc(o)=\infty$, then $u(r)\to\infty$ as $r\to\infty$ and, therefore, the unique spherically symmetric harmonic function is unbounded. Therefore, by Theorem~\ref{thm:Liouville} (applied to the corresponding weighted line graph) every spherically symmetric bounded positive subharmonic function is zero. If there was an arbitrary bounded positive subharmonic function, then its average is a  spherically symmetric bounded positive subharmonic function and, hence, zero.
Thus, there is no non-zero bounded harmonic function by Theorem~\ref{thm:Liouville}. 

Conversely, if $G^bc(o)<\infty$, then $Gc(o)<1$ by Lemma~\ref{lem:recurrence2}.

Thus, the statement and also the ``in particular'' statement follow by Theorem~\ref{thm:main}.
\end{proof}

\textbf{Acknowledgement.} The authors acknowledge the financial support of the DFG and the hospitality of the IIAS Jerusalem.

\printbibliography


\end{document}